\newtheorem{thm}{Theorem}[section]
\newtheorem{cor}[thm]{Corollary}
\newtheorem{prop}[thm]{Proposition}
\newtheorem{lem}[thm]{Lemma}
\theoremstyle{definition}
\newtheorem{rem}[thm]{Remark}
\newtheorem{definition}[thm]{Definition}
\numberwithin{equation}{section}
\numberwithin{thm}{section}
\newcommand{\vertiii}[1]{{\left\vert\kern-0.25ex\left\vert\kern-0.25ex\left\vert #1
    \right\vert\kern-0.25ex\right\vert\kern-0.25ex\right\vert}}
\title[Dissipation and blow up for semilinear heat equations]{
Global well-posedness, dissipation and blow up for semilinear heat equations in energy spaces associated with self-adjoint operators}
\author[
M. Ikeda and K. Taniguchi]{Masahiro Ikeda and Koichi Taniguchi}
\address{
Masahiro Ikeda \endgraf
Department of Mathematics \endgraf
Faculty of Science and Technology \endgraf
Keio University \endgraf
3-14-1 Hiyoshi, Kohoku-ku, Yokohama, 223-8522, Japan/\endgraf
Center for Advanced Intelligence Project \endgraf
RIKEN, Japan
}
\email{masahiro.ikeda@keio.jp/masahiro.ikeda@riken.jp}
\address{
Koichi Taniguchi \endgraf
Graduate School of Mathematics \endgraf
Nagoya University \endgraf
Furocho, Chikusaku, Nagoya 464-8602, Japan}
\email{koichi-t@math.nagoya-u.ac.jp}
\date{\today}
\keywords{Semilinear heat equations, global existence, dissipation, blow-up}
\begin{document}

\footnote[0]
{2010 {\it Mathematics Subject Classification.}
Primary 35K05; Secondary 35B40;}

\begin{abstract}
The purpose in this paper is to determine the global behavior of solutions to the initial-boundary value problems for
energy-subcritical and critical semilinear heat equations by initial data with lower energy than the mountain pass level 
in energy spaces associated with self-adjoint operators satisfying Gaussian upper bounds. 
Our self-adjoint operators include the Dirichlet Laplacian on an open set, Robin Laplacian on an exterior domain, and Schr\"odinger operators, etc.
\end{abstract}

\maketitle

\section{Introduction}

Let $\Omega$ be an open set in $\mathbb R^d$ with $d\ge1$. We consider
the Cauchy problem of energy-subcritical semilinear evolution equations:
\begin{equation}
\label{eq.NLH-sub}
\begin{cases}
	\partial_t u + L u +u= |u|^{p-1}u \quad &\text{in }(0,T)\times\Omega,\\
	u(0) = u_0 \in H^1(L)
\end{cases}
\end{equation}
with $1<p<p^*$,
and energy-critical semilinear evolution equations:
\begin{equation}
\label{eq.NLH-cri}
\begin{cases}
	\partial_t u + L u =  |u|^{\frac{4}{d-2}}u \quad &\text{in } (0,T)\times\Omega,\\
	u(0) = u_0 \in \dot H^1(L)
\end{cases}
\end{equation}
for $d\ge3$, where $T>0$, 
$u_0=u_0(x)$ is a given complex-valued function on $\Omega$,
$u=u(t,x)$ is an unknown complex-valued function on $[0,T)\times\Omega$,
$L$ is a self-adjoint operator on $L^2(\Omega)$ satisfying Assumption A or B below, 
and $p^*$ is the Sobolev 
critical exponent given by
\[
p^* =
\begin{cases}
\frac{d+2}{d-2}\quad &\text{if }d\ge3,\\
\infty &\text{if }d=1,2.
\end{cases}
\]
Here
$H^1(L)$ and $\dot H^1(L)$ are Sobolev spaces associated with $L$, and their norms are given by 
\[
\|f \|_{H^1(L)} := \| (I+L)^{\frac12}f\|_{L^2(\Omega)}
\quad \text{and}\quad
\|f \|_{\dot H^1(L)} := \| L^{\frac12}f\|_{L^2(\Omega)},
\]
respectively, where $I$ is the identity operator on $L^2(\Omega)$.
For precise definitions of $H^1(L)$ and $\dot H^1(L)$, 
we refer to Definition \ref{def:Sobolev} below.
For the sake of convenience we set $\mathcal E(L) = H^1(L)$ or $\dot H^1(L)$, and choose $\mathcal E(L) = H^1(L)$ in the case \eqref{eq.NLH-sub}
and $\mathcal E(L) = \dot H^1(L)$ in the case \eqref{eq.NLH-cri}.
The space $\mathcal E(L)$ is called the energy space associated with $L$. 
The energy functional $E_L : \mathcal E(L) \to \mathbb R$ is defined by
\begin{equation*}\label{eq.energy}
E_L (u)= \frac12 \|u\|_{\mathcal E(L)}^2 - \frac{1}{p+1} \|u\|_{L^{p+1}(\Omega)}^{p+1},
\end{equation*}
and the energy is formally dissipated along solutions to \eqref{eq.NLH-sub} and \eqref{eq.NLH-cri}:
\begin{equation}\label{eq.energy_diss}
\frac{d}{dt}E_L (u(t)) = -\int_\Omega |u_t(t)|^2\, dx \le 0.
\end{equation}

The problems \eqref{eq.NLH-sub} and \eqref{eq.NLH-cri} correspond to the energy-subcritical and critical cases in the following sense, respectively. 
The equation \eqref{eq.NLH-cri} with $L=-\Delta$ on $\mathbb R^d$, i.e., 
\begin{equation}\label{eq.heat}
\partial_t u - \Delta u = |u|^{p-1}u \quad\text{in } \mathbb R_+\times\mathbb R^d
\end{equation}
is invariant under the scale transformation
\[
u(t,x) \mapsto 
u_\lambda (t,x):=\lambda^{\frac{2}{p-1}}u(\lambda^2 t, \lambda x),\quad \lambda>0.
\]
Then 
\[
\|u_\lambda (0,\cdot)\|_{\dot H^{1}(\mathbb R^d)}
=
\lambda^{\frac{2}{p-1}-\frac{d-2}{2}}
\|u (0,\cdot)\|_{\dot H^{1}(\mathbb R^d)},
\]
and hence, if $p$ satisfies 
\[
\frac{2}{p-1}-\frac{d-2}{2} = 0 
\iff 
p=p^{*},
\]
then the $\dot H^{1}$-norm of initial data is invariant. 
Similarly, the energy is also invariant. Hence 
the case $p=p^*$ is called the energy-critical case, 
and the case $p<p^*$ (resp. $p>p^*$) is called the energy-subcritical case 
(resp. the energy-supercritical case).
Based on the above, we call the problems \eqref{eq.NLH-sub} and \eqref{eq.NLH-cri}  {\it energy-subcritical} and {\it energy-critical} in this paper, respectively.

The nonlinearity term $+|u|^{p-1}u$
of \eqref{eq.NLH-sub} and \eqref{eq.NLH-cri} is a sourcing term,
while the nonlinearity terms $-|u|^{p-1} u$ works as an absorbing term.
In the absorbing case,
all solutions exist globally in time and are dissipative, i.e., 
\[
\lim_{t\to\infty} \|u(t)\|_{\mathcal E(L)}=0,
\]
at least in the energy-subcritical case (see Remark \ref{rem:defo} below).
On the other hand, 
the behavior of solutions the equations with a sourcing term is completely different.
In this case,
the global behavior of solutions depends on initial data, that is,
the solutions are global (dissipation, asymptotical attraction by the ground state solution 
up to the scaling and translation, and blowing up in infinite time, etc.) or blow up in finite time.
Our purpose is to determine the global behavior of solutions by initial data $u_0$ 
with energy below the mountain pass level
\begin{equation}\label{eq.l_L}
l_L := \inf_{u\in \mathcal E(L)\setminus \{0\}} \max_{\lambda\ge0} E_L(\lambda u).
\end{equation}
For this purpose, let us introduce the Nehari functional and Nehari manifold:
\begin{equation*}\label{eq.J_L}
J_L (\phi) :=
\frac{d}{d\lambda} E_L (\lambda \phi)\big|_{\lambda=1}
=
\|\phi\|_{\mathcal E(L)}^2-\| \phi \|_{L^{p+1}(\Omega)}^{p+1},
\end{equation*}
\begin{equation*}\label{eq.Nehari}
\mathcal N_L :=
\big\{
\phi \in \mathcal E(L)\setminus \{0\} : J_L (\phi) = 0
\big\}.
\end{equation*}
Then the functional $J_L$ is formally written as
\begin{equation}\label{eq.J_L2}
J_{L}(u(t)) = -\frac12 \frac{d}{dt} \|u(t)\|_{L^2(\Omega)}^2
\end{equation}
for solutions $u$ to \eqref{eq.NLH-sub} or \eqref{eq.NLH-cri}.

In the case when $L=-\Delta$ on $\mathbb R^d$ or a bounded domain $\Omega$, 
the global behavior of solutions to \eqref{eq.NLH-sub} and \eqref{eq.NLH-cri} has been investigated. 
In particular, 
there are many literatures on their dynamics with energy below 
the mountain pass level, i.e., $E_{-\Delta}(u_0) < l_{-\Delta}$.  
In the energy-subcritical case, it was proved that 
the solution is global and dissipative 
if initial data belongs to the so-called stable set, while the solution blows up in finite time if 
initial data belongs to the so-called unstable set. 
Then the Nehari manifold 
plays an important role as a borderline separating 
the stable set and unstable set (see, e.g., \cite{GW-2005},\cite{IS-1996},\cite{I-1977},\cite{O-1981},\cite{PS-1975},\cite{T-1972}). 
In terms of the energy-critical case,
the pioneer works by Kenig and Merle 
are well known for focusing semilinear Schr\"odinger equations and wave equations on $\mathbb R^d$ with $d=3,4,5$ 
(see \cite{KM-2006}, \cite{KM-2008} and also Killip and Visan \cite{KV-2010} for Schr\"odinger equations with $d\ge6$).
Recently, Gustafson and Roxanas proved a similar result
for the semilinear heat equation \eqref{eq.heat} with $p=p^*$
for $d=4$ (see \cite{GR-2018}).

In the case when $E_{-\Delta}(u_0)= l_{-\Delta}$ and $J_{-\Delta}(u_0)= 0$, 
the solutions to \eqref{eq.NLH-sub} and \eqref{eq.NLH-cri} are global and stationary (not dissipative), because 
these problems have ground state solutions 
(see, e.g., \cite{BC-1987}, \cite{KM-2006}, \cite{T-1976}, \cite{T-1972}). 
On the other hand, if $E_{-\Delta}(u_0)= l_{-\Delta}$ and $J_{-\Delta}(u_0)\not= 0$, then 
the problem is reduced into the case $E_{-\Delta}(u_0)< l_{-\Delta}$, i.e., 
the solution is dissipative or blows up in finite.

The behavior of solutions to semilinear heat equations with energy above 
the mountain pass level, i.e., $E_{-\Delta}(u_0)> l_{-\Delta}$, is completely different from the low energy case. 
In the energy-subcritical case, it was proved by Dickstein, Mizoguchi, Souplet and Weissler  
that the Nehari manifold is no longer the borderline (see \cite{DMSW-2011} and also Gazzola and  Weth \cite{GW-2005}). 
In the energy-critical case, 
Collot, Merle and Rapha\"el gave a classification of flow near the ground state solution for $d\ge7$.
More precisely, they proved that
one of the following three phenomenon always occurs: Global existence and asymptotical attraction by the ground state solution 
up to the scaling and translation; global existence and dissipation;
type I blow up (see \cite{CMR-2017} and references therein).
In the high energy case, Schweyer constructed type II blow up solutions for $d=4$.  
More precisely, 
for any $\varepsilon>0$, there exists a radially symmetric initial data $u_0 \in H^1(\mathbb R^4)$
with $l_{-\Delta}< E_{-\Delta}(u_0) < l_{-\Delta} + \varepsilon$ such that the solution to \eqref{eq.NLH-cri}
blows up in type II (see \cite{Sch-2012}).\\

In this paper we generalize the above results in the low energy case 
to more general self-adjoint operators $L$ with the following assumptions. 
In the subcritical case \eqref{eq.NLH-sub}, we assume the following $L^2$-$L^q$-estimates: \\

\noindent{\bf Assumption A.} $L$ is a self-adjoint operator on $L^2(\Omega)$ such that
$\{e^{-tL}\}_{t>0}$ satisfies the following: For any $2 \le q < p^*+1$, there exist two constants $C>0$ and $0\le \omega<1$ such that
\begin{equation}
\label{eq.L2Lq}
\|e^{-tL}\|_{L^2(\Omega) \to L^q(\Omega)} \le C t^{-\frac{d}{2}(\frac{1}{2}-\frac{1}{q})} e^{\omega t}
\end{equation}
for any $t>0$.\\

In the critical case \eqref{eq.NLH-cri}, we assume the following Gaussian upper estimate:\\

\noindent{\bf Assumption B.}
$L$ is a non-negative and self-adjoint operator on $L^2(\Omega)$
such that
the kernel $K_L(t;x,y)$ of $\{e^{-tL}\}_{t>0}$ satisfies the following: There exist two constants $c>0$ and $C>0$ such that
\begin{equation}
\label{eq.Gauss}
|K_L(t;x,y)| \le C t^{-\frac{d}{2}} e^{-\frac{|x-y|^2}{ct}}
\end{equation}
for any $t>0$ and almost everywhere $x,y\in\Omega$.\\

Note that Assumption B is stronger than Assumption A. 
Assumptions A and B are closely related with the Sobolev embeddings 
$H^1(L) \hookrightarrow L^{p+1}(\Omega)$ and $\dot H^1(L) \hookrightarrow L^{p^*+1}(\Omega)$, respectively, 
which play a fundamental role in well-definedness of energy and 
proving well-posedness for 
\eqref{eq.NLH-sub} and \eqref{eq.NLH-cri}, etc. 
In the following, let us define the inhomogeneous Sobolev spaces $H^s(L)$ of order $s \in \mathbb R$ under Assumption A and
the homogeneous ones $\dot H^s(L)$ of order $s$ under Assumption B, 
and state the Sobolev embedding theorem for $H^1(L)$ and $\dot H^1(L)$.

\begin{definition}\label{def:Sobolev}
\begin{itemize}
\item[(i)] Suppose that $L$ satisfies Assumption {\rm A}. Then for $s\in \mathbb R$ the inhomogeneous Sobolev space $H^s(L)$ is defined by
\[
H^s(L):=
\{ f \in \mathcal X'(L) : \|f\|_{H^s(L)}<\infty \}
\]
with the norm
\[
\| f \|_{H^s(L)}:=\|(I+ L)^{\frac{s}{2}} f\|_{L^2(\Omega)}.
\]
Here $\mathcal X'(L)$ is the topological dual of $\mathcal X(L)$ defined by
\[
\mathcal X(L)
:= \left\{
f \in L^1(\Omega) \cap \mathcal D(L): L^M f\in L^1(\Omega) \cap \mathcal D(L) \text{ for all }M\in\mathbb N
\right\},
\]
and $\mathcal D(L)$ denotes the domain of $L$.

\item[(ii)] Suppose that $L$ satisfies Assumption {\rm B}. Then for $s\in \mathbb R$ the homogeneous Sobolev space $\dot H^s(L)$ is defined by
\[
\dot H^s(L):=
\{ f \in \mathcal Z'(L) : \|f\|_{\dot H^s(L)}<\infty \}
\]
with the norm
\[
\| f \|_{\dot H^s(L)}:=\|L^{\frac{s}{2}} f\|_{L^2(\Omega)}.
\]
Here $\mathcal Z'(L)$ is the topological dual of $\mathcal Z(L)$ defined by
\[
\mathcal Z(L)
:= \left\{
f \in L^1(\Omega) \cap \mathcal D(L): L^M f\in L^1(\Omega) \cap \mathcal D(L) \text{ for all }M\in\mathbb Z
\right\}.
\]
\end{itemize}
\end{definition}

Then $H^s(L)$ and $\dot H^s(L)$ are well defined and complete.
We note that if $s\ge0$, then
\[
H^s(L)\cong
\{ f \in L^2(\Omega) : \|f\|_{H^s(L)}<\infty \},
\]
and the spaces $H^{-s}(L)$ and $\dot H^{-s}(L)$ are isomorphic to the adjoint spaces of $H^s(L)$ and $\dot H^s(L)$, respectively (see \cite{IMT-Besov} and Appendix~\ref{App:A}).
For these spaces, we have the Sobolev inequalities.

\begin{prop}\label{prop:Sobolev}
\begin{itemize}
\item[(i)] Suppose that $L$ satisfies Assumption~{\rm A}. Then
for any $1 < p < p^*$, there exists a constant $C>0$ such that
\begin{equation}\label{eq.Sobolev1}
\| f \|_{L^{p+1}(\Omega)}
\le C
\| f \|_{H^1(L)}
\end{equation}
for any $f \in H^1(L)$.
\item[(ii)] Let $d\ge3$. Suppose that $L$ satisfies Assumption {\rm B}. Then
there exists a constant $C>0$ such that
\begin{equation}\label{eq.Sobolev2}
\| f \|_{L^{\frac{2d}{d-2}}(\Omega)}
\le C
\| f \|_{\dot H^1(L)}
\end{equation}
for any $f \in \dot H^1(L)$.
\end{itemize}
\end{prop}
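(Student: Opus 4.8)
The plan is to reduce both inequalities to mapping properties of the operators $(I+L)^{-1/2}$ and $L^{-1/2}$, realized through the subordination (Gamma-function) representations
\[
(I+L)^{-\frac12}=\frac{1}{\Gamma(\frac12)}\int_0^\infty t^{-\frac12}e^{-t}e^{-tL}\,dt,
\qquad
L^{-\frac12}=\frac{1}{\Gamma(\frac12)}\int_0^\infty t^{-\frac12}e^{-tL}\,dt,
\]
valid by the functional calculus on a dense class (e.g.\ $\mathcal X(L)$, resp.\ $\mathcal Z(L)$) and extended afterwards by density and completeness of the spaces. For part (i), given $f\in H^1(L)$ I would set $g:=(I+L)^{\frac12}f$, so that $\|g\|_{L^2(\Omega)}=\|f\|_{H^1(L)}$ and $f=(I+L)^{-\frac12}g$. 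Applying Minkowski's integral inequality in $L^{p+1}(\Omega)$ together with the $L^2$--$L^{p+1}$ bound of Assumption~A (legitimate since $2<p+1<p^*+1$) gives
\[
\|f\|_{L^{p+1}(\Omega)}
\le
\frac{C}{\Gamma(\frac12)}\,\|g\|_{L^2(\Omega)}
\int_0^\infty t^{-\frac12-\frac{d}{2}(\frac12-\frac1{p+1})}\,e^{-(1-\omega)t}\,dt.
\]

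The proof of (i) is then completed by checking that this $t$-integral converges. At $t\to0^+$ the exponent $-\frac12-\frac{d}{2}(\frac12-\frac1{p+1})$ exceeds $-1$ precisely because $p<p^*$ (equivalently $\frac12-\frac1{p+1}<\frac1d$ when $d\ge3$, and automatically when $d=1,2$); at $t\to\infty$ the factor $e^{-(1-\omega)t}$ is integrable since $\omega<1$. Both structural features of Assumption~A are thus used: the strict upper bound on the exponent secures integrability at the origin, while $\omega<1$ secures it at infinity, the latter being available only because of the inhomogeneous shift present in $(I+L)$.

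For part (ii) the same manipulation applied to $L^{-1/2}$ fails at the critical exponent $q^*=\frac{2d}{d-2}$: the $t$-integral then diverges logarithmically at $t=0$ and carries no decaying factor at $t=\infty$, so a direct operator-norm estimate is unavailable. This is the main obstacle, and I would circumvent it by passing to the pointwise integral kernel of $L^{-1/2}$,
\[
\mathcal K(x,y)=\frac{1}{\Gamma(\frac12)}\int_0^\infty t^{-\frac12}K_L(t;x,y)\,dt,
\]
and invoking the Gaussian upper bound of Assumption~B. After the substitution $u=|x-y|^2/(ct)$ one obtains, for $d\ge3$,
\[
|\mathcal K(x,y)|
\le
\frac{C}{\Gamma(\frac12)}\int_0^\infty t^{-\frac{d+1}{2}}e^{-\frac{|x-y|^2}{ct}}\,dt
\le
C'\,|x-y|^{-(d-1)}.
\]

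Thus $L^{-1/2}$ is dominated pointwise by the Riesz potential $I_1$ of order one: writing $f=L^{-1/2}g$ with $g:=L^{1/2}f$ gives $|f(x)|\le C'(I_1|g|)(x)$ for almost every $x$. The Hardy--Littlewood--Sobolev inequality, applied with $\frac{1}{q^*}=\frac12-\frac1d$ (admissible since $1<2<q^*<\infty$ for $d\ge3$), then yields $\|f\|_{L^{q^*}(\Omega)}\le C\|g\|_{L^2(\Omega)}=C\|f\|_{\dot H^1(L)}$, which is (ii). (Alternatively, one could avoid the explicit kernel bound and derive the endpoint estimate by establishing the two weak-type bounds for $L^{-1/2}$ and interpolating via Marcinkiewicz's theorem, but the Riesz-kernel domination above seems the most transparent route.)
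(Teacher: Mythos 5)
Your proof of (i) is essentially identical to the paper's: the same subordination formula for $(I+L)^{-1/2}$, Minkowski's inequality, and the $L^2$--$L^{p+1}$ bound from Assumption A, with convergence of the $t$-integral at the origin coming from $p<p^*$ and at infinity from $\omega<1$. Part (ii), however, is correct but follows a genuinely different route. The paper never estimates a kernel: it deduces \eqref{eq.Sobolev2} from the Besov-space embedding chain $\dot H^1(L)=\dot B^1_{2,2}(L)\hookrightarrow \dot B^0_{\frac{2d}{d-2},2}(L)\hookrightarrow L^{\frac{2d}{d-2}}(\Omega)$, proved as in Propositions 3.2--3.3 of \cite{IMT-Besov} using spectral multiplier estimates available under the Gaussian bound. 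You instead dominate the kernel of $L^{-1/2}$ by the Riesz kernel $C|x-y|^{-(d-1)}$ (your substitution gives the constant $c^{(d-1)/2}\Gamma\bigl(\tfrac{d-1}{2}\bigr)$, finite for $d\ge3$) and conclude by Hardy--Littlewood--Sobolev after zero-extension to $\mathbb R^d$; this is the classical Varopoulos-style argument that Gaussian heat-kernel bounds imply the Sobolev inequality, and it correctly diagnoses why the operator-norm approach of part (i) must fail at the endpoint and be replaced by a pointwise kernel bound. The trade-off: the paper's route reuses machinery it needs anyway (the Besov formalism on $\mathcal Z'(L)$, which is what defines $\dot H^s(L)$ in the first place) and yields the whole scale of embeddings at once, while yours is more elementary and independent of the multiplier theorem. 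The one step you should spell out is the density argument you only allude to: since $0$ may lie in the spectrum of $L$ under Assumption B, the identity $f=L^{-1/2}(L^{1/2}f)$ and the kernel representation of $L^{-1/2}$ are first justified for spectrally localized $f$ (where the subordination integral converges in $L^2$), and the inequality is then extended to all $f\in\dot H^1(L)$ by completeness, checking that the $L^{\frac{2d}{d-2}}$-limit of the approximants coincides with $f$ as an element of $\mathcal Z'(L)$ (using, e.g., $\mathcal Z(L)\subset L^1(\Omega)\cap L^2(\Omega)\subset L^{\frac{2d}{d+2}}(\Omega)$). This is routine, but it is precisely where the distributional framework of Definition \ref{def:Sobolev} enters, and the statement for all of $\dot H^1(L)$ is not literally proved without it.
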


For the proof we refer to Appendix \ref{App:B}.

\begin{rem}
The number $l_{L}$ in \eqref{eq.l_L} is also characterized by the best constant of the Sobolev inequality. 
 More precisely, 
\begin{equation}\label{eq.l_L2}
l_L = \inf_{f \in \mathcal N_L} E_L(f) = \frac{p-1}{2(p +1)} S_{p+1}^{-\frac{2(p +1)}{p-1}},
\end{equation}
where $S_{p+1}=S_{p+1}(d,L)$ is the best constant of the Sobolev inequality \eqref{eq.Sobolev1} if $1<p<p^*$,
and \eqref{eq.Sobolev2} if $d\ge3$ and $p=p^*$
(see Appendix \ref{App:C}).
\end{rem}

A typical example of $L$ is the Laplace operator $-\Delta$ on $L^2(\mathbb R^d)$. In the rest of this section, let us give other major examples of $L$ satisfying Assumption~A or B.

\begin{itemize}

\item[(a)] ({\bf The Schr\"odinger operator with the Dirichlet boundary condition})
The Schr\"odinger operator $-\Delta_D + V$ with the Dirichlet boundary condition on an open set $\Omega$ of $\mathbb R^d$ with $d\ge1$
satisfies {\it Assumption {\rm A}}, where $V=V(x)$ is a real-valued measurable function on $\Omega$ such that
the infimum of the spectrum of $-\Delta_D + V$ is strictly larger than $-1$,
and
\[
V=V_+ - V_-,\quad V_{\pm} \ge0,\quad V_+ \in L^1_{\mathrm{loc}}(\Omega)\quad \text{and}\quad
V_- \in K_d(\Omega)
\]
(see, e.g., Propositions 2.1 and 3.1 in \cite{IMT-RMI}).
We say that $V_-$ belongs to the Kato class $K_{d}(\Omega)$ if
\begin{align}\notag
\left\{
\begin{aligned}
	&\lim_{r \rightarrow 0} \sup_{x \in \Omega} \int_{\Omega \cap \{|x-y|<r\}}
	   \frac{V_-(y)}{|x-y|^{d-2}} \,dy = 0 &\text{for }d\ge 3, \\
	&\lim_{r \rightarrow 0} \sup_{x \in \Omega} \int_{\Omega \cap \{|x-y|<r\}}
	   \log (|x-y|^{-1})V_-(y) \,dy = 0 &\text{for }d=2, \\
	&\sup_{x \in \Omega}\int_{\Omega \cap \{|x-y|<1\}} V_-(y) \,dy <\infty &\text{for }d=1\,
	\end{aligned}\right.
\end{align}
(see Section A.2 in Simon \cite{Simon-1982}).
It is readily seen that the potential $V_-(x) = 1/|x|^\alpha$ with $0\le \alpha <2$ if $d\ge2$ and $0\le \alpha <1$ if $d=1$
is included in $K_{d}(\Omega)$. 
It should be noted that the potential like $V_-(x) = 1 / |x|^2$ as $|x|\to 0$
is excluded from $K_d(\Omega)$ 
(see Example (e) below).

In addition, if the negative part $V_-$ satisfies
\[
\begin{cases}
\displaystyle\sup_{x \in \Omega} \int_{\Omega} \frac{V_-(y)}{|x-y|^{d-2}} \,dy
 < \dfrac{\pi^{\frac{d}{2}}}{\Gamma (d/2-1)}
& \quad \text{if } d \geq 3,\\
V _- = 0
& \quad \text{if } d = 1, 2,
\end{cases}
\]
then $-\Delta_D + V$ satisfies {\it Assumption {\rm B}} (see Propositions 2.1 and 3.1 in \cite{IMT-RMI}).
In particular, $-\Delta_D$ (i.e., the case of $V=0$) satisfies {\it Assumption {\rm B}}.

\item[(b)] ({\bf The Neumann Laplacian}) Let $\Omega$ be a domain of $\mathbb R^d$ having the extension property (see, e.g., Davies \cite{D_1989}).
Then the Laplace operator $-\Delta_N$ with the Neumann boundary condition on $\Omega$
satisfies {\it Assumption {\rm A}}.
Indeed, when $\Omega$ has the extension property, the following Sobolev inequality holds:
\[
\| f \|_{L^{p^*+1}(\Omega)}
\le C
\| f \|_{H^1(\Omega)}
\]
for any $f \in H^1(\Omega)$.
Then, applying the above estimate to $f=e^{t\Delta_N}u_0$, we have 
\[
\|e^{t\Delta_N}\|_{L^2(\Omega)\to L^{p^*+1}(\Omega)}
\le C (t^{-\frac12} + 1)
\]
for any $t>0$. Hence 
we obtain the estimate \eqref{eq.L2Lq} with $\omega=0$ 
by combining 
the Riesz-thorin interpolation theorem with the above estimate and $L^{2}$-boundedness of $e^{t\Delta_N}$. Thus $-\Delta_{N}$ satisfies  {\it Assumption {\rm A}}. 
However $-\Delta_{N}$ does not satisfies {\it Assumption {\rm B}} in general. 
Indeed, if $\Omega$ is bounded, then 
$-\Delta_{N}$ has the zero eigenvalue.
This implies that 
$e^{t\Delta_{N}}$ does not satisfy the Gaussian upper bound \eqref{eq.Gauss} for $t>1$ in {\it Assumption {\rm B}}.

\item[(c)] ({\bf The Robin Laplacian on an exterior domain})
Let $d\ge3$ and $\Omega$ be the exterior domain in $\mathbb R^d$ of a compact and connected set with Lipschitz boundary.
We consider the Laplace operator $-\Delta_\sigma$ on $L^2(\Omega)$ associated with a quadratic form
\[
q_\sigma(f,g) = \int_\Omega \nabla f \cdot \overline{\nabla g}\,dx
+ \int_{\partial \Omega} \sigma f \overline{g}\,dS
\]
for any $f,g \in H^1(\Omega)$, where $\sigma$ is a function $\partial \Omega \to \mathbb R$ and $\partial \Omega$ denotes the boundary of $\Omega$.
Note that $-\Delta_0$ (i.e., the case of $\sigma=0$) is the Neumann Laplacian on $L^2(\Omega)$.
Assume that $\sigma \in L^\infty(\partial \Omega)$ and $\sigma\ge0$.
Then $-\Delta_\sigma$ satisfies {\it Assumption~{\rm B}}.
This is a consequence of the following two estimates:
\begin{equation}\label{eq.Robin1}
0 \le K_{-\Delta_\sigma}(t; x,y) \le K_{-\Delta_0}(t; x,y), 
\end{equation}
\begin{equation}\label{eq.Robin2}
0\le K_{-\Delta_0}(t;x,y)
\le C t^{-\frac{d}{2}} e^{-\frac{|x-y|^2}{ct}}
\end{equation}
for any $t>0$ and almost everywhere $x,y\in\Omega$. 
The estimate \eqref{eq.Robin1} follows from domination of semigroups, 
and the proof of \eqref{eq.Robin2} can be found in 
Chen, Williams and Zhao \cite{CWZ-1994}.

In the case $d=2$, if $\Omega$ is the exterior domain in $\mathbb R^2$ of a compact and connected set with $C^2$-boundary and
\[
\underset{x\in\partial \Omega}{\mathrm{ess\ inf}}\,
\sigma(x) >0,
\]
then $-\Delta_\sigma$ satisfies {\it Assumption~{\rm B}}
(see Section 2 in Kova\v{r}\'{i}k and Mugnolo \cite{KM-2018}).

In the case $d=1$, let $\Omega=\mathbb R_+$ and $-\Delta_\sigma$
is the Laplace operator on $L^2(\mathbb R_+)$ associated with a quadratic form
\[
q_\sigma(f,g) = \int_0^\infty f' \overline{g'}\,dx
+ \sigma f(0)\overline{g(0)}
\]
for any $f,g \in H^1(\mathbb R_+)$, where $\sigma\ge 0$ is a constant.
Then $-\Delta_\sigma$ satisfies {\it Assumption~{\rm B}}
(see Section 4 in \cite{KM-2018}).

\item[(d)] ({\bf The elliptic operator})
Let $L$ be the self-adjoint operator associated with a quadratic form
\[
q(f,g) =
\int_{\mathbb R^d}
\left\{
\sum_{k,j=1}^d a_{kj} D_k f \overline{D_j g} + \sum_{k=1}^d
(b_k \overline{g} D_k f + c_k f \overline{D_k g}) + a_0 f \overline{g}
\right\}\,dx
\]
for any $f, g \in H^1(\mathbb R^d)$, where
$a_{kj}, b_k, c_k, a_0 \in L^\infty(\mathbb R^d)$ are real-valued functions for all $1\le j,k\le d$, and the principle part is elliptic, i.e.,
there exists a constant $\eta>0$ such that
\[
\sum_{j,k=1}^d a_{kj}(x) \xi_j \overline{\xi_k} \ge \eta |\xi|^2,\quad \xi \in\mathbb C^{d}, \text{ a.e.}\, x \in \mathbb R^d.
\]
Then $L$ is self-adjoint on $L^2(\mathbb R^d)$ and satisfies the Gaussian upper estimate: There exist three constants $c>0$, $C>0$ and $\omega\ge0$ such that
\[
|K_L(t;x,y)| \le C t^{-\frac{d}{2}} e^{\omega t} e^{-\frac{|x-y|^2}{ct}},\quad t>0,\quad  \text{a.e.}\, x,y\in\mathbb R^{d}
\]
(see, e.g., \cite{O_2005}).
If $\omega< 1$, then $L$ satisfies {\it Assumption {\rm A}}.

\item[(e)] ({\bf The Schr\"odinger operator with a negative inverse-square potential})
The Schr\"odinger operator $- \Delta - c/|x|^2$
on $\mathbb R^d$ with $d\ge3$, where
\[
0< c \le \frac{(d-2)^2}{4},
\]
satisfies {\it Assumption {\rm A}}, and not {\it Assumption {\rm B}} (see \cite{IMSS-2016}, \cite{IO-appear}).

\end{itemize}

\medskip

The rest of this paper is organized as follows.
In Section \ref{sec:2} we state main results on the global behavior of solutions to \eqref{eq.NLH-sub} or \eqref{eq.NLH-cri}.
In Section \ref{sec:3} we provide the results on local well-posedness of \eqref{eq.NLH-sub} and \eqref{eq.NLH-cri}, respectively.
In Section \ref{sec:4} we show some lemmas on variational estimates.
In Section \ref{sec:5} the proofs of main results are given.

\section{Statements of main results}\label{sec:2}

\subsection{The subcritical case}
First let us give a definition of (energy finite) solution to \eqref{eq.NLH-sub}. 

\begin{definition}\label{def:1}
Let $T \in (0,\infty]$.
A function $u : [0,T) \times \Omega \rightarrow \mathbb C$ is called a solution
to \eqref{eq.NLH-sub} on $[0,T) \times \Omega$ if
$u \in C([0,T']; H^1(L))$ and $u_t, Lu \in L^2([0,T']\times\Omega)$
for any $T' \in [0,T)$,
and it satisfies the Duhamel formula
\begin{equation}\label{eq.Duhamel1}
u(t) = e^{-t(I+L)}u_0 + \int_0^t e^{-(t-s)(I+L)} |u(s)|^{p-1}u(s) \,ds
\end{equation}
for any $t\in [0,T)$. The time $T$ is said to be the maximal existence time if the solution cannot be extended beyond $[0,T)$, and
we denote by $T_m$ the maximal existence time 
(This is well-defined by the uniqueness (ii) in Proposition \ref{thm:LW-sub} below).
We say that $u$ is a global solution if $T_m=+\infty$, and that $u$ blows up in finite time if $T_m<+\infty$.
\end{definition}

Our main result in the subcritical case is the following theorem, which states the dichotomy of dissipation and blowing up in finite time under the assumption $E_L (u_0) \le l_L$.

\begin{thm}\label{thm:GB-sub}
Suppose that $L$ satisfies Assumption {\rm A}.
Let $u$ be a solution to the problem \eqref{eq.NLH-sub} with initial data $u_0\in H^1(L)$
satisfying $E_L (u_0) \le l_L$.
Then the following assertions hold{\rm :}
\begin{itemize}
\item[(i)] If $J_L (u_0)> 0$, then $T_m = +\infty$ and
\[
\|u(t)\|_{H^1(L)}=o(t^{-\frac12})\quad \text{as $t\to \infty$}. 
\]
\item[(ii)] If $J_L (u_0) < 0$, then $T_m < +\infty$.
\end{itemize}
\end{thm}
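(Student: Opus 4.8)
The plan is to prove both parts through the standard potential-well dichotomy, using only the energy dissipation \eqref{eq.energy_diss}, the Nehari identity $\frac{d}{dt}\|u(t)\|_{L^2(\Omega)}^2 = -2J_L(u(t))$ from \eqref{eq.J_L2}, and the variational characterization \eqref{eq.l_L2}. The algebraic backbone is the pointwise identity
\[
E_L(\phi) = \frac{p-1}{2(p+1)}\|\phi\|_{H^1(L)}^2 + \frac{1}{p+1}J_L(\phi),
\]
obtained by eliminating $\|\phi\|_{L^{p+1}(\Omega)}^{p+1}$ between $E_L$ and $J_L$; equivalently $-J_L(\phi) = \frac{p-1}{2}\|\phi\|_{H^1(L)}^2 - (p+1)E_L(\phi)$. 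Since $t\mapsto u(t)$ is continuous into $H^1(L)$ and $H^1(L)\hookrightarrow L^{p+1}(\Omega)$ by Proposition \ref{prop:Sobolev}, both $E_L(u(t))$ and $J_L(u(t))$ are continuous in $t$, and $E_L(u(t))$ is nonincreasing with $E_L(u(t))\le E_L(u_0)\le l_L$.

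For part (i) I would first show the stable set $\{J_L\ge0\}$ is invariant: if $J_L(u(t))$ first vanished at $t_0$ with $u(t_0)\ne0$, then $u(t_0)\in\mathcal N_L$ would give $E_L(u(t_0))\ge l_L$ by \eqref{eq.l_L2}, forcing $E_L$ to be constant equal to $l_L$ on $[0,t_0]$, hence $u_t\equiv0$ by \eqref{eq.energy_diss} and $J_L(u_0)=0$, a contradiction; the case $u(t_0)=0$ only makes the solution vanish identically thereafter, so $J_L(u(t))\ge0$ on $[0,T_m)$. The displayed identity then yields the uniform bound $\|u(t)\|_{H^1(L)}^2\le\frac{2(p+1)}{p-1}l_L$, so $T_m=+\infty$ by the blow-up alternative in Proposition \ref{thm:LW-sub}. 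For the rate I would prove $E_L(u(t))\to0$: writing $M(t)=\|u(t)\|_{L^2(\Omega)}^2$, monotonicity of $M$ gives $\int_0^\infty J_L(u(t))\,dt<\infty$, so $J_L(u(t_n))\to0$ along some $t_n\to\infty$; since $\max_{\lambda\ge0}E_L(\lambda u(t_n))\ge l_L$ by \eqref{eq.l_L}, letting $n\to\infty$ forces $\lim_t E_L(u(t))$ to be $0$ or $\ge l_L$, and the value $l_L$ is excluded exactly as in the invariance step, giving $\lim_t E_L(u(t))=0$. Once $\|u(t)\|_{H^1(L)}\to0$, Sobolev yields $\|u(t)\|_{L^{p+1}}^{p+1}\le\frac12\|u(t)\|_{H^1(L)}^2$ and hence $E_L(u(t))\le J_L(u(t))$ for large $t$; integrating $M'=-2J_L$ gives $\int_t^\infty J_L\,ds=\frac12M(t)$, so $\int_t^\infty E_L\,ds\le\frac12M(t)$, and monotonicity of $E_L$ gives $tE_L(2t)\le\frac12M(t)=o(1)$. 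Thus $E_L(u(t))=o(t^{-1})$ and $\|u(t)\|_{H^1(L)}=o(t^{-1/2})$.

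For part (ii) the invariance of $\{J_L<0\}$ follows by the same first-time argument, now also using that $\|u(t)\|_{L^2(\Omega)}^2$ is strictly increasing while $J_L<0$, which rules out $u(t_0)=0$. On $\{J_L<0\}$, Sobolev and \eqref{eq.l_L2} give $\|u(t)\|_{H^1(L)}^2>\frac{2(p+1)}{p-1}l_L$, whence $-J_L(u(t))\ge(p+1)\big(l_L-E_L(u(t))\big)$. Setting $G(t)=l_L-E_L(u(t))\ge0$ and $N(t)=\|u(t)\|_{L^2(\Omega)}^2$, we have $G'=\|u_t\|_{L^2(\Omega)}^2\ge0$, $N'=-2J_L\ge2(p+1)G$, while Cauchy--Schwarz applied to $J_L=-\mathrm{Re}\langle u,u_t\rangle$ gives $(N')^2=4J_L^2\le4N\|u_t\|_{L^2(\Omega)}^2=4NG'$. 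Combining these two bounds yields $dN/dG\le 2N/((p+1)G)$, hence $N\le CG^{2/(p+1)}$, and then $G'\ge(N')^2/(4N)\ge cG^{2p/(p+1)}$ with exponent $2p/(p+1)>1$. Therefore $G$, equivalently $-E_L(u(t))$, blows up in finite time, which is impossible for a solution remaining in $H^1(L)$; hence $T_m<+\infty$. When $E_L(u_0)=l_L$ one first advances to a time where $E_L<l_L$, which exists since $u_t\not\equiv0$ (as $J_L(u_0)\ne0$ prevents $u$ from being stationary).

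The main obstacle is rigor rather than strategy: the identities \eqref{eq.energy_diss}, \eqref{eq.J_L2} and the differentiations of $N(t)$ and $G(t)$ are only formal for energy-class solutions, so they must be justified within the regularity $u_t,Lu\in L^2([0,T']\times\Omega)$ from Proposition \ref{thm:LW-sub}, typically by a limiting argument from smooth or truncated data. A secondary delicate point is the exclusion of the borderline value $\lim_t E_L(u(t))=l_L$ in the decay proof, together with the validity of the Cauchy--Schwarz step built on $J_L=-\mathrm{Re}\langle u,u_t\rangle$; both rely on these identities holding as genuine, not merely formal, equalities along the flow.
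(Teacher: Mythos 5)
Your proposal is correct, but it takes a genuinely different route from the paper at the two decisive steps, so it is worth comparing. For part (i), the paper's mechanism is its Lemma \ref{lem:ET}, a quantitative coercivity bound $J_L(u(t))\ge\delta\|u(t)\|_{H^1(L)}^2$ (proved by a two-function variational argument that needs $E_L(u_0)<l_L$ strictly, the borderline case $E_L(u_0)=l_L$ being handled separately by the reduction in Remark \ref{rem:equal}); combined with \eqref{eq.J_L2} this gives $u\in L^2(\mathbb R_+;H^1(L))$ at once. You instead use only $\int_0^\infty J_L(u)\,dt<\infty$, extract $t_n$ with $J_L(u(t_n))\to0$, and exclude the borderline limit $l_L$ of the monotone energy via the energy identity; this is softer, treats $E_L(u_0)=l_L$ directly, and your bootstrap ($E_L\le J_L$ for large times, then $tE_L(u(2t))\le\frac12\|u(t)\|_{L^2(\Omega)}^2\to0$) actually establishes the stated rate $o(t^{-1/2})$, which the paper's written proof leaves implicit (it stops at $\|u(t)\|_{H^1(L)}\to0$, though its coercivity lemma plus the same monotonicity trick yields the rate). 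For part (ii), you prove the paper's key inequality $-J_L(u(t))\ge(p+1)\{l_L-E_L(u(t))\}$ (its Lemma \ref{lem:J_neg}) by the one-line observation that $J_L<0$ and Sobolev force $\|u(t)\|_{H^1(L)}^2>\frac{2(p+1)}{p-1}l_L$; this is genuinely simpler than the paper's proof via $K(\lambda)=E_L(e^{\lambda}u(t))$. Then, where the paper runs Levine's concavity argument on $I(t)=\int_0^t\|u\|_{L^2(\Omega)}^2\,ds+A$ (tuning $A,\alpha,\varepsilon$ so that $I''I-(1+\alpha)(I')^2>0$ and reading off blow-up of $\|u(t)\|_{L^2(\Omega)}$), you close a first-order system: $N'\ge2(p+1)G$ and the Cauchy--Schwarz bound $(N')^2\le4NG'$ give $N\lesssim G^{2/(p+1)}$, hence $G'\ge cG^{2p/(p+1)}$ with exponent $>1$, so the energy defect $G=l_L-E_L(u)$ blows up in finite time, contradicting continuity of $E_L(u(\cdot))$ on $[0,\infty)$. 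Both arguments consume exactly the same ingredients (\eqref{eq.J_L2}, the energy identity in Proposition \ref{thm:LW-sub}(v), Cauchy--Schwarz), and the rigor concerns you flag are settled by the regularity $u_t,Lu\in L^2([0,T']\times\Omega)$ built into Definition \ref{def:1} together with that proposition. The one asset of the paper's formulation is that its concavity functional transfers, after inserting the cut-off $\chi_R$, to the critical case where $\|u(t)\|_{L^2(\Omega)}$ may be infinite (proof of Theorem \ref{thm:GB-cri}(ii)); your $G$-blow-up argument relies equally on finiteness of $N(t)$ and would require the same localization there.
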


\begin{figure}[htbp]
\begin{center}
\includegraphics[width=140mm]{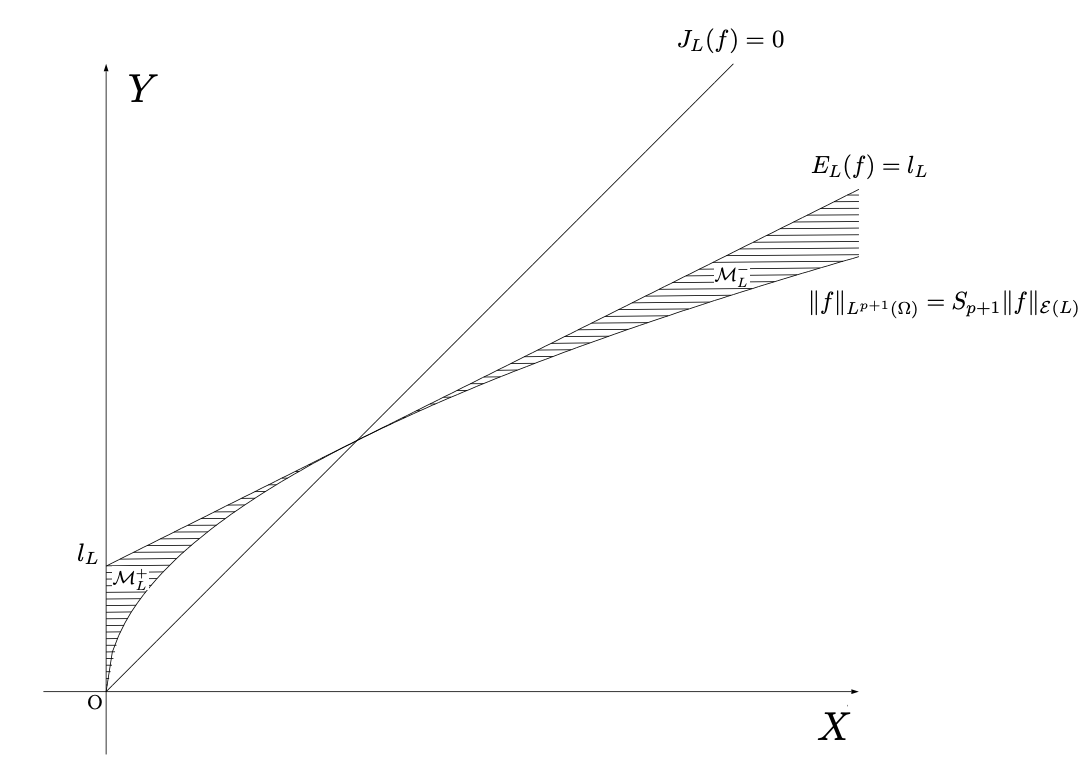}
\caption{$X$-axis: $X=\frac{1}{p+1}\|f\|_{L^{p+1}}^{p+1}$; $Y$-axis: $Y=\frac12 \|f \|_{\mathcal E(L)}^{2}$}\label{fig:1}
\end{center}
\end{figure}

Let us define
\[
\mathcal M^{+}_L :=
\{
f \in \mathcal E(L) : E_L (f) \le l_L,\ J_L (f) > 0
\}\cup \{0\},
\]
\[
\mathcal M^{-}_L :=
\{
f \in \mathcal E(L) : E_L (f) \le l_L,\ J_L (f) < 0
\}.
\]
Theorem \ref{thm:GB-sub} states 
the solution is global and dissipative 
if $u_{0}$ belongs to the so-called stable set $\mathcal M^{+}_L$, while the solution blows up in finite time if 
$u_{0}$ belongs to the so-called unstable set $\mathcal M^{-}_L$.
Then the Nehari manifold $\mathcal N_{L}$
is a borderline separating $\mathcal M^{+}_L$ and $\mathcal M^{-}_L$ (see Figure \ref{fig:1}).

\begin{rem}\label{rem:equal}
Let us give three remarks on the case when $J_L (u_0) = 0$.
\begin{itemize}
\item[(i)]
There is no function $u_0 \in H^1(L)\setminus \{0\}$ such that $E_L (u_0) < l_L$ and $J_L (u_0) = 0$,
because the existence of such a function contradicts \eqref{eq.l_L2}.

\item[(ii)] Let $u_0 \in H^1(L)$ with $E_L (u_0) = l_L$ and $J_L (u_0) = 0$.
Then the solution $u$ must be one of the following three solutions:
A stationary solution;
a global (in time) solution decaying to zero as $t \to \infty$;
a blow-up solution in finite time.
In fact,
if 
\begin{equation}\label{eq.ec}
\text{$E_L(u(t)) = E_L (u_0)$ for any $t \in [0,T_m)$,}
\end{equation}
then the solution $u$ must be a ground state solution (i.e., a minimal energy nontrivial solution) to the stationary problem
\begin{equation}\label{eq.stationary}
\begin{cases}
Lv + v = |v|^{p-1}v\quad \text{in } \Omega,\\
v \in H^1(L).
\end{cases}
\end{equation}
Therefore, if \eqref{eq.stationary} has a ground state solution, then $u$ is stationary.
If \eqref{eq.stationary} has no ground state solution,
then \eqref{eq.ec} does not occur.
When \eqref{eq.ec} does not hold, we see from Theorem \ref{thm:GB-sub} that
$u$ is dissipative or blows up in finite.
\end{itemize}
\end{rem}

\begin{rem}\label{rem:defo}
In the absorbing and energy-subcritical case,
all solutions are global and dissipative.
Indeed, in this case, we have
\[
\|u(t)\|_{H^1(L)}^2 \le 2E_L(u(t)) \le 2E_L(u_0)\quad \text{for any $t \in [0,T_m)$},
\]
which shows $T_m=+\infty$ by (iv) in Proposition \ref{thm:LW-sub} below.
Furthermore, it follows from \eqref{eq.J_L2} that
\[
\|u(t)\|_{L^2(\Omega)}^2
+
2 \int_0^t
J_L (u(s))\,ds
=
\|u_0\|_{L^2(\Omega)}^2
\]
for any $t>0$. Since $J_L(u(t)) \ge \|u(t)\|_{H^1(L)}^2$ for any $t>0$,
we find that
\[
2\|u\|_{L^2(\mathbb R_+; H^1(L))}^2
\le \sup_{t\ge0} \left(\|u(t)\|_{L^2(\Omega)}^2 +2\|u\|_{L^2((0,t); H^1(L))}^2\right) \le \|u_0\|_{L^2(\Omega)}^2,
\]
which proves that $u(t) \to 0$ in $H^1(L)$ as $t\to\infty$.
\end{rem}

\begin{rem}
If the infimum of spectrum of $L$ is strictly positive, then we have
\[
\| f \|_{L^{p+1}(\Omega)}
\le C
\| f \|_{\dot H^1(L)}
\]
for any $f \in H^1(L)$.
Hence, by using the above inequality instead of \eqref{eq.Sobolev1}, 
we can prove the same statements as in Theorem~\ref{thm:GB-sub} for 
 the problem
\[
\begin{cases}
	\partial_t u + L u= |u|^{p-1}u \quad &\text{in }(0,T)\times\Omega,\\
	u(0) = u_0 \in H^1(L)
\end{cases}
\]
with $1<p<p^*$.
\end{rem}

\subsection{The critical case}
Let us give a definition of (energy finite) solution to \eqref{eq.NLH-cri}. 

\begin{definition}\label{def:2}
Let $T \in (0,\infty]$.
A function $u : [0,T) \times \Omega \rightarrow \mathbb R$ is called a solution
to \eqref{eq.NLH-cri} if 
$u \in C([0,T']; \dot H^1(L)) \cap L^{\frac{2(d+2)}{d-2}} ((0,T']\times\Omega)$
and 
$u_t, Lu \in L^2([0,T']\times\Omega)$
for any $T' \in [0,T)$,
and it satisfies the Duhamel formula
\begin{equation}\label{eq.Duhamel2}
u(t) = e^{-tL}u_0 + \int_0^t e^{-(t-s)L} |u(s)|^{p-1}u(s) \,ds
\end{equation}
for any $t\in [0,T)$.
The definitions of the maximal existence time $T_m$, global solution, and 
blow up solution in finite time are the same as in Definition \ref{def:1}.
\end{definition}

In the critical case, we also have the similar result to Theorem \ref{thm:GB-sub}, 
i.e., the dichotomy of dissipation and blowing up in finite time under the assumption $E_L (u_0) \le l_L$.

\begin{thm}\label{thm:GB-cri}
Suppose that $L$ satisfies Assumption {\rm B}.
Let $u$ be a solution to the problem \eqref{eq.NLH-cri} with initial data $u_0\in \dot{H}^1(L)$
satisfying $E_L (u_0) \le l_L$.
Then the following assertions hold{\rm :}
\begin{itemize}
\item[(i)] If $J_L (u_0)> 0$ and 
$\|e^{-tL}u_0\|_{S(\mathbb R_+)}$ is sufficiently small, then $T_m = +\infty$ and
\[
\lim_{t\to\infty}\|u(t)\|_{\dot{H}^1(L)}=0,
\]
where $\|\cdot\|_{S(\mathbb R_+)}$ is the space-time norm given by \eqref{eq.St-norm} below.
\item[(ii)] If $J_L (u_0) < 0$, then $T_m < +\infty$.
\end{itemize}
\end{thm}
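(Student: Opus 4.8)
The plan is to treat the two assertions by different mechanisms---a small-data global theory together with a tail-decay argument for (i), and the concavity (Levine--Payne--Sattinger) method for (ii)---after first recording, from the variational estimates of Section~\ref{sec:4} and the energy dissipation \eqref{eq.energy_diss}, the invariance of the relevant subset of $\dot H^1(L)$ under the flow. For (i) I would begin with the coercivity supplied by the stable-set conditions: if $\phi\in\dot H^1(L)$ satisfies $E_L(\phi)\le l_L$ and $J_L(\phi)>0$, then $\|\phi\|_{L^{p+1}(\Omega)}^{p+1}<\|\phi\|_{\dot H^1(L)}^2$, whence
\[
\tfrac{p-1}{2(p+1)}\|\phi\|_{\dot H^1(L)}^2< E_L(\phi)\le l_L,
\]
so the energy controls the $\dot H^1(L)$-norm and, in particular, $E_L\ge0$ on this set. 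Using continuity in $t$, the monotonicity of $E_L$ from \eqref{eq.energy_diss}, and the characterization \eqref{eq.l_L2} (which rules out $\mathcal N_L$-elements of energy below $l_L$), I would show that $\{E_L\le l_L,\ J_L>0\}$ is invariant, so that $J_L(u(t))>0$ and $E_L(u(t))\le l_L$ persist on $[0,T_m)$.

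Global existence then comes from the smallness hypothesis: the local theory of Section~\ref{sec:3}, run in the space-time norm $\|\cdot\|_{S(\mathbb R_+)}$, produces a unique global solution with $\|u\|_{S(\mathbb R_+)}<\infty$ (comparable to the small quantity $\|e^{-tL}u_0\|_{S(\mathbb R_+)}$) by the usual contraction and continuation bootstrap. The decay $\|u(t)\|_{\dot H^1(L)}\to0$ I would extract from the finiteness of $\|u\|_{S(\mathbb R_+)}$ by a tail argument: given $\delta>0$, pick $T$ with $\|u\|_{S([T,\infty))}<\delta$, write the Duhamel formula \eqref{eq.Duhamel2} from time $T$, and apply the inhomogeneous Strichartz/energy estimates of Section~\ref{sec:3} to bound the nonlinear contribution by $C\|u\|_{S([T,\infty))}^{p}\le C\delta^{p}$ uniformly in $t>T$, so that
\[
\limsup_{t\to\infty}\|u(t)\|_{\dot H^1(L)}\le \limsup_{t\to\infty}\|e^{-(t-T)L}u(T)\|_{\dot H^1(L)}+C\delta^{p}.
\]
The first term vanishes because $e^{-sL}\to0$ strongly on $\dot H^1(L)$ as $s\to\infty$ (spectral theorem and dominated convergence), and letting $\delta\to0$ gives the claim.

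For (ii) the plan is the concavity method. Since $J_L(u(t))=-\tfrac12\tfrac{d}{dt}\|u(t)\|_{L^2(\Omega)}^2$ by \eqref{eq.J_L2}, the hypothesis $J_L(u_0)<0$ makes $\|u(t)\|_{L^2}$ strictly increasing, so $u(t)\neq0$; if $J_L$ first vanished at some $t_0$, then $u(t_0)\in\mathcal N_L$ would force $E_L(u(t_0))\ge l_L$ by \eqref{eq.l_L2}, contradicting the strict energy decrease from \eqref{eq.energy_diss} (the only way to avoid strict decrease is a stationary solution, which has $J_L=0$). Hence $J_L(u(t))<0$ and $E_L(u(t))<l_L$ throughout, and \eqref{eq.Sobolev2} yields the lower bound $\|u(t)\|_{\dot H^1(L)}^2>S_{p+1}^{-2(p+1)/(p-1)}$. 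Assuming $T_m=+\infty$ for contradiction, I would set $\mathcal M(t)=\|u(t)\|_{L^2(\Omega)}^2$, use $\mathcal M'=-2J_L>0$, the Cauchy--Schwarz bound $(\mathcal M')^2\le4\mathcal M\|u_t\|_{L^2}^2$, and the equation to reach a differential inequality forcing $t\mapsto(\mathcal M+\beta(t+\tau)^2)^{-(p-1)/2}$ to be concave, positive and strictly decreasing; it therefore vanishes at a finite time, so $\mathcal M\to\infty$ in finite time, contradicting global existence.

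The hard part will be closing the blow-up argument in (ii). A direct second differentiation of $\mathcal M$ produces a remainder proportional to $\tfrac{d}{dt}\|u(t)\|_{\dot H^1(L)}^2$ of indefinite sign---an artifact of the parabolic, non-Hamiltonian structure---so one must invoke Levine's auxiliary quadratic term and the strictly positive gap $l_L-E_L(u(t))>0$ to absorb it; one must also justify the differentiations for merely energy-class solutions (working with the regularity $u_t,Lu\in L^2$ from Definition~\ref{def:2}) and ensure $u(t)\in L^2(\Omega)$ so that $\mathcal M$ is finite, which in the homogeneous critical class is itself a point requiring care (e.g.\ by approximation or a localized functional). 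For (i) the only genuine subtlety is the decay step, namely quantifying the strong $\dot H^1(L)$-decay of the linear semigroup together with the uniform tail control of the Strichartz norm.
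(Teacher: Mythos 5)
Your part (i) is correct and, apart from one mechanism, is the paper's own argument: global existence comes from the small-data theory ((vi) of Proposition~\ref{thm:LW-cri}), and decay comes from splitting the Duhamel formula at a late time and bounding the nonlinear tail by $C\|u\|_{S((T,t))}^{\frac{d+2}{d-2}}$ (via \eqref{eq.Str4}), which is small because $\|u\|_{S(\mathbb R_+)}<\infty$. The one difference is the linear piece: the paper approximates $L^{1/2}u_0$ by functions in $L^1(\Omega)\cap L^2(\Omega)$ and uses the $L^1\to L^2$ decay $t^{-d/4}$ from Lemma~\ref{lem:Str}(i), whereas you invoke strong convergence $e^{-sL}\to0$ from the spectral theorem. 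Your route is fine and arguably cleaner, but "spectral theorem and dominated convergence" alone is incomplete: dominated convergence gives $e^{-sL}g\to E(\{0\})g$, the projection onto $\ker L$, so you must also know that zero is not an eigenvalue of $L$; under Assumption B this is exactly Proposition~\ref{prop:ev} in Appendix~\ref{App:A}. Note also that neither you nor the paper actually uses $J_L(u_0)>0$ in the critical case (i); only the smallness hypothesis enters, so your variational preliminaries there are harmless but idle.

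The genuine gap is in part (ii). Your functional $\mathcal M(t)=\|u(t)\|_{L^2(\Omega)}^2$ need not be finite: the data lies only in $\dot H^1(L)$, and in the homogeneous energy-critical setting $u(t)\notin L^2(\Omega)$ in general, so $\mathcal M$, the identity $\mathcal M'=-2J_L(u)$ (i.e.\ \eqref{eq.J_L2}), and all subsequent differentiations are undefined, and the concavity scheme never starts. This is not a peripheral technicality to be dispatched "by approximation or a localized functional": it is the central difficulty of the critical case, and the paper states explicitly that its contribution beyond \cite{GR-2018} (where $u_0\in L^2(\mathbb R^4)$ is assumed) is precisely the removal of this $L^2$ assumption. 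Flagging the issue is not solving it, and the localization genuinely changes the computation. The paper's resolution: fix $\chi_R\in C_0^\infty$, observe $\chi_Ru(t)\in L^2(\Omega)$ by H\"older and Sobolev as in \eqref{eq.1}, set $I_R(t)=\int_0^t\|\chi_Ru(s)\|_{L^2(\Omega)}^2\,ds+A$, and compute $I_R''(t)=-2J_L(u(t))+2((\chi_R^2-1)u(t),u_t(t))_{L^2(\Omega)}$. The new error term must then be controlled uniformly in $t$; this uses the a priori bound $\sup_t|(u(t),u_t(t))_{L^2(\Omega)}|\le\|u\|_{L^\infty((0,T);\dot H^1(L))}\|u_t\|_{L^\infty((0,T);\dot H^{-1}(L))}<\infty$ of \eqref{eq.finite} (itself extracted from the Duhamel formula and the space-time estimates), after which the error is made small by taking $R$ large and the concavity inequality \eqref{eq.keypoint2} closes as in the subcritical proof; finite-time blow-up of $I_R$ then contradicts $T_m=+\infty$ through $\|\chi_Ru(t)\|_{L^2(\Omega)}\le C\|u(t)\|_{\dot H^1(L)}$ and continuity of the flow. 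Your proposal contains none of this, so (ii) is not proved.

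A secondary point on your (ii) mechanics: even granting $L^2$ data, differentiating $\mathcal M$ twice forces you to differentiate $J_L(u(t))$, which produces exactly the indefinite-sign remainder you mention and requires more regularity than Definition~\ref{def:2} provides. The paper (following \cite{GR-2018}) avoids this entirely by applying the concavity argument to the time-integrated quantity $\int_0^t\|\cdot\|_{L^2(\Omega)}^2\,ds+A$, so that the second derivative is $-2J_L(u(t))$ itself, bounded below by Lemma~\ref{lem:J_neg} together with the energy identity; no second differentiation of $J_L$ and no auxiliary quadratic term $\beta(t+\tau)^2$ are needed.
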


Compared with the subcritical case, 
we impose the assumption on smallness of initial data 
in the assertion (i) of Theorem~\ref{thm:GB-cri}. 
This comes from the fact that the maximal existence time $T_m$ depends on 
not only the size of $u_0$, but also 
its profile.
In the case when $L=-\Delta$ on $L^2(\mathbb R^d)$ with $d=4$, 
it is known that 
the assumption can be removed via the linear profile decomposition
plus backward uniqueness (see \cite{GR-2018}).

In the case when $L=-\Delta$ on $L^2(\mathbb R^d)$ with $d=4$, 
the statement (ii) with the additional assumption $u_{0}\in L^{2}(\mathbb R^{d})$ is proved by \cite{GR-2018}. In this paper 
we succeed in removing the additional assumption by combining the blow up argument in \cite{GR-2018} with a cut-off argument.

\begin{rem}\label{rem:equal2}
The similar statements to Remark \ref{rem:equal} hold in the energy-critical case.
\end{rem}

\begin{rem}
In studying Dirichlet problems of partial differential equations, 
the following homogeneous Sobolev space $\dot H^{1}_{0}(L)$ as the completion of $C^\infty_0(\Omega)$ with respect to $\|\cdot\|_{\dot H^1(L)}$ is often used:
\[
\dot H^1_0(L) := \overline{C^\infty_0(\Omega)}^{\|\cdot\|_{\dot H^1(L)}}
\]
(see, e.g., \cite{KVZ-2016b}). 
The space $\dot H^1_0(L)$ does not coincide with $\dot H^1(L)$ in general, 
but when $L=-\Delta$ on $L^2(\mathbb R^d)$, we have
\[
\dot H^1_0(-\Delta) = \dot H^1(-\Delta) \left(= \dot H^1(\mathbb R^d)\right).
\]
When we consider initial data $u_{0} \in \dot H^1_0(L)$, 
we can prove the same statements as in Theorem \ref{thm:GB-cri}
under the following weaker assumption than Assumption B: 
For any $2\le q\le r\le \infty$ there exists a constant $C>0$ such that
\begin{equation}\label{eq.assC}
\|e^{-tL}\|_{L^q(\Omega) \to L^r(\Omega)} \le C t^{-\frac{d}{2}(\frac{1}{q}-\frac{1}{r})}\end{equation}
for any $t>0$. Indeed, 
the Sobolev inequality \eqref{eq.Sobolev2} for $f\in \dot H^1_0(L)$ 
is assured by \eqref{eq.assC} 
(see Theorem 6.4 in Chapter 6 from Ouhabaz~\cite{O_2005}).
By using this Sobolev inequality instead of (ii) in Proposition \ref{prop:Sobolev}, 
we can obtain the statements in Theorem~\ref{thm:GB-cri}
replacing $\dot H^{1}(L)$ with $\dot H^1_0(L)$. 
The proof is the same as 
the case of $\dot{H}^1(L)$. So we may omit the details.
\end{rem}

\section{Local Theory}\label{sec:3}

\subsection{The subcritical case}
In this subsection we state a result on local well-posedness for the problem \eqref{eq.NLH-sub}.
For this purpose, we prepare the following:

\begin{lem}\label{lem:L2Lq}
Suppose that $L$ satisfies Assumption {\rm A}. Then for any $2 \le q < p^*+1$,
there exists a constant $C>0$ such that
\begin{equation}\label{eq.L2Lq2}
\|e^{-t(I+L)}\|_{L^2(\Omega) \to L^q(\Omega)} + \|e^{-t(I+L)}\|_{L^{q'}(\Omega) \to L^2(\Omega)} \le C t^{-\frac{d}{2}(\frac{1}{2}-\frac{1}{q})}
\end{equation}
for any $t>0$, where $1/q + 1/q' = 1$.
\end{lem}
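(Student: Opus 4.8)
The plan is to reduce both operator norms on the left-hand side of \eqref{eq.L2Lq2} to the single $L^2$-to-$L^q$ bound supplied by Assumption~A, exploiting the extra decay gained from the shift by $I$ and the self-adjointness of the semigroup. First I would treat the term $\|e^{-t(I+L)}\|_{L^2(\Omega)\to L^q(\Omega)}$. Writing $e^{-t(I+L)} = e^{-t}e^{-tL}$ and inserting \eqref{eq.L2Lq} gives
\[
\|e^{-t(I+L)}\|_{L^2(\Omega)\to L^q(\Omega)} \le C\, t^{-\frac{d}{2}(\frac12-\frac1q)}\, e^{-(1-\omega)t}.
\]
The crucial point is that Assumption~A requires $0 \le \omega < 1$, so $1-\omega > 0$ and $e^{-(1-\omega)t} \le 1$ for every $t > 0$; this is exactly what the shift by $I$ buys, and it removes the exponential factor uniformly in $t$. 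Hence the first term is bounded by $C\, t^{-\frac{d}{2}(\frac12-\frac1q)}$.

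Next I would obtain the dual bound $\|e^{-t(I+L)}\|_{L^{q'}(\Omega)\to L^2(\Omega)}$ from the one just proved by duality. Since $L$ is self-adjoint, functional calculus shows that $S:=e^{-t(I+L)}$ is a bounded, self-adjoint operator on $L^2(\Omega)$ (boundedness following from the case $q=2$ of \eqref{eq.L2Lq}). For $f \in L^{q'}(\Omega) \cap L^2(\Omega)$ and $g \in L^2(\Omega)$, self-adjointness together with Hölder's inequality gives
\[
|\langle Sf, g\rangle| = |\langle f, Sg\rangle| \le \|f\|_{L^{q'}(\Omega)}\, \|Sg\|_{L^q(\Omega)} \le \|S\|_{L^2(\Omega)\to L^q(\Omega)}\, \|f\|_{L^{q'}(\Omega)}\, \|g\|_{L^2(\Omega)}.
\]
Taking the supremum over $g$ with $\|g\|_{L^2(\Omega)}\le 1$ yields $\|Sf\|_{L^2(\Omega)} \le \|S\|_{L^2(\Omega)\to L^q(\Omega)}\,\|f\|_{L^{q'}(\Omega)}$, and since $2 \le q < \infty$ forces $q' \le 2$, the subspace $L^{q'}(\Omega)\cap L^2(\Omega)$ is dense in $L^{q'}(\Omega)$, so $\|S\|_{L^{q'}(\Omega)\to L^2(\Omega)} \le \|S\|_{L^2(\Omega)\to L^q(\Omega)}$. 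Combining with the first step gives the claimed estimate with constant $2C$.

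I do not expect a genuine obstacle here: the statement is essentially bookkeeping built on Assumption~A. The only points that require care are the observation that the hypothesis $\omega < 1$ (rather than merely $\omega$ finite) is precisely what makes $e^{-(1-\omega)t}$ uniformly bounded for $t>0$, and the routine density argument underlying the duality step. If one prefers to avoid invoking self-adjointness of $S$ explicitly, the same dual bound can be read off directly from the identity $\langle e^{-tL}f, g\rangle = \langle f, e^{-tL}g\rangle$, which holds because $L$ is self-adjoint.
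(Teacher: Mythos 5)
Your proof is correct and follows exactly the paper's route: the first bound comes from writing $e^{-t(I+L)}=e^{-t}e^{-tL}$ and absorbing the factor $e^{\omega t}$ from Assumption~A using $\omega<1$, and the second bound is the standard duality argument based on self-adjointness of $e^{-t(I+L)}$. The paper states this in one line; you have merely supplied the routine details (H\"older, density of $L^{q'}(\Omega)\cap L^2(\Omega)$ in $L^{q'}(\Omega)$), all of which are sound.
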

The estimate for the first term in the left hand side of \eqref{eq.L2Lq2} immediately follows from Assumption A,
and the estimate for the second term is obtained by the duality argument.\\

The local well-posedness for \eqref{eq.NLH-sub} is proved by
the fixed point argument, Lemma \ref{lem:L2Lq} and
the Sobolev inequality \eqref{eq.Sobolev1} in Proposition \ref{prop:Sobolev} (see, e.g., Cazenave and Weissler \cite{CW-1990}).
More precisely, we have the following:

\begin{prop}\label{thm:LW-sub}
Suppose that $L$ satisfies Assumption {\rm A}.
Let $u_0 \in H^1(L)$.
Then the following assertions hold{\rm :}

\begin{itemize}
\item[(i)] {\rm (Existence)} 
There exists a maximal existence time $T_m>0$, depending only on $\|u_0\|_{H^1(L)}$, such that
there exists a solution $u$ to \eqref{eq.NLH-sub} on $[0, T_m)\times \Omega$ with $u(0)=u_0$ in the sense of Definition \ref{def:1}.

\item[(ii)] {\rm (Uniqueness)} Let $T>0$. If $u_1, u_2 \in L^\infty([0,T]; H^1(L))$ 
satisfy the equation \eqref{eq.Duhamel1}
with $u_1(0)=u_2(0)=u_0$, then $u_1=u_2$ on $[0,T]$.

\item[(iii)] {\rm (Continuous dependence on initial data)}
The function $T_m : H^1(L) \to (0,\infty]$ is lower semicontinuous.
Furthermore, if $u_{0,n} \to u_0$ in $H^1(L)$ as $n\to\infty$ and $u_n$ is a solution to \eqref{eq.NLH-sub} with $u_n(0)=u_{0,n}$,
then $u_n \to u$ in $L^\infty([0,T];H^1(L))$ as $n\to\infty$ for any $0<T<T_m$.

\item[(iv)] {\rm (Blow-up criterion)}
If $T_m < +\infty$, then $\displaystyle \lim_{t\to T_m}\|u(t)\|_{H^1(L)} = +\infty$.

\item[(v)] {\rm (Energy identity)}
A solution $u$ to \eqref{eq.NLH-sub} in $[0, T_m)\times \Omega$ with $u(0)=u_0$ in the sense of Definition~\ref{def:1} satisfies the identity
\[
E_L(u(t)) + \int_0^t\int_\Omega |u_t|^2\, dx dt =  E_L(u_0),\quad 0<t<T_m.
\]
\end{itemize}
\end{prop}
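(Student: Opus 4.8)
The plan is to establish all five assertions by a contraction mapping argument in the spirit of Cazenave and Weissler~\cite{CW-1990}, using the smoothing estimates of Lemma~\ref{lem:L2Lq} together with the Sobolev inequality~\eqref{eq.Sobolev1}. For the existence (i), I would set up the Duhamel map
\[
\Phi(u)(t) := e^{-t(I+L)}u_0 + \int_0^t e^{-(t-s)(I+L)}\,|u(s)|^{p-1}u(s)\,ds
\]
on a closed ball of $C([0,T];H^1(L))$ and show it is a contraction for $T$ small depending only on $\|u_0\|_{H^1(L)}$. Since $L$ is self-adjoint, $(I+L)^{1/2}$ commutes with the semigroup, and writing $(I+L)^{1/2}e^{-t(I+L)} = \big[(I+L)^{1/2}e^{-\frac t2(I+L)}\big]e^{-\frac t2(I+L)}$ and combining the spectral bound $\|(I+L)^{1/2}e^{-\frac t2(I+L)}\|_{L^2\to L^2}\le Ct^{-1/2}$ with Lemma~\ref{lem:L2Lq} at $q=p+1$ yields
\[
\|e^{-t(I+L)}g\|_{H^1(L)}\le Ct^{-\frac12-\frac d2(\frac12-\frac1{p+1})}\|g\|_{L^{(p+1)/p}}.
\]
Estimating the nonlinearity by $\||u|^{p-1}u\|_{L^{(p+1)/p}}=\|u\|_{L^{p+1}}^p\le C\|u\|_{H^1(L)}^p$ via~\eqref{eq.Sobolev1}, the time exponent is integrable precisely when $p<p^*$, since $\tfrac12+\tfrac d2(\tfrac12-\tfrac1{p+1})<1$ is equivalent to $p<p^*$. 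Hence the Duhamel integral gains a positive power of $T$ and the contraction closes, after which $T_m$ is defined by continuation.

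For uniqueness (ii), I would estimate the difference $w=u_1-u_2$ of two solutions in $L^\infty([0,T];H^1(L))$ by the same smoothing bound, using the pointwise inequality $\big||u_1|^{p-1}u_1-|u_2|^{p-1}u_2\big|\le C(|u_1|^{p-1}+|u_2|^{p-1})|w|$ together with H\"older and~\eqref{eq.Sobolev1} to derive a singular Gronwall inequality $\|w(t)\|_{H^1(L)}\le C\int_0^t(t-s)^{-\alpha}\|w(s)\|_{H^1(L)}\,ds$ with $\alpha<1$, which forces $w\equiv0$ on a short interval and then on $[0,T]$ by iteration; this also makes $T_m$ well defined. Continuous dependence (iii) and the blow-up criterion (iv) follow from the same uniform estimates: since the local existence time depends only on $\|u_0\|_{H^1(L)}$, lower semicontinuity of $T_m$ and convergence of approximating solutions come from a standard stability-plus-continuation argument, while if $T_m<\infty$ and $\|u(t)\|_{H^1(L)}$ stayed bounded along a sequence $t\to T_m$, one could restart the solution with a uniform lifespan, contradicting maximality.

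Finally, for the energy identity (v), I would exploit the additional parabolic regularity $u_t,Lu\in L^2([0,T']\times\Omega)$ built into Definition~\ref{def:1}. Pairing the equation~\eqref{eq.NLH-sub} with $\overline{u_t}$ in $L^2(\Omega)$ and integrating in time gives, at least formally, $\frac{d}{dt}E_L(u(t))=-\|u_t(t)\|_{L^2(\Omega)}^2$, which integrates to the stated identity. The main obstacle is making this rigorous with only $L^2$-regularity of $u_t$: I expect it to require showing that $t\mapsto\tfrac12\|u(t)\|_{H^1(L)}^2$ and $t\mapsto\tfrac1{p+1}\|u(t)\|_{L^{p+1}(\Omega)}^{p+1}$ are absolutely continuous with the expected derivatives, which I would handle by a regularization/approximation argument (mollifying in time, or verifying the identity for smooth approximations and passing to the limit using the uniform bounds). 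This step, rather than the contraction, is where the genuine technical care is needed.
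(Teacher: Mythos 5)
Your main line of attack is exactly the one the paper indicates for Proposition \ref{thm:LW-sub} (the paper gives no detailed proof, saying only that it follows from the fixed point argument, Lemma \ref{lem:L2Lq} and the Sobolev inequality \eqref{eq.Sobolev1}, as in Cazenave--Weissler), and most of your steps are correct: the smoothing bound obtained by splitting $(I+L)^{1/2}e^{-t(I+L)}=\bigl[(I+L)^{1/2}e^{-\frac t2(I+L)}\bigr]e^{-\frac t2(I+L)}$ and using the dual half of \eqref{eq.L2Lq2}, the arithmetic showing that the time exponent is integrable precisely when $p<p^*$, the singular Gronwall argument for uniqueness in $L^\infty([0,T];H^1(L))$, and the restart/stability arguments for (iii) and (iv) all go through.

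The genuine gap is in (i). Your contraction is run on a ball of $C([0,T];H^1(L))$, so the fixed point you obtain is only a mild solution of \eqref{eq.Duhamel1}; assertion (i), however, asks for a solution \emph{in the sense of Definition \ref{def:1}}, which additionally requires $u_t,\,Lu\in L^2([0,T']\times\Omega)$ for every $T'<T$. This regularity is neither produced by your scheme nor readable off the equation term by term: for $u\in H^1(L)$ one only controls $|u|^{p-1}u$ in $L^{(p+1)/p}(\Omega)$, not in $L^2(\Omega)$ (for instance $d=3$, $p=4<p^*=5$ gives $\|\,|u|^{p-1}u\,\|_{L^2}=\|u\|_{L^{8}}^{4}$, while the assumptions only yield embeddings $H^1(L)\hookrightarrow L^q$ for $q<6$). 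In your part (v) you then invoke precisely this regularity as ``built into Definition \ref{def:1}''; that is legitimate for (v) taken in isolation, since there the solution is \emph{given} in that sense, but it means the regularity is never established anywhere for the solutions constructed in (i) --- so (i) is unproved, and the solutions it produces are not known to satisfy the energy identity, which is what the proof of Theorem \ref{thm:GB-sub} relies on. The paper's own template for closing this gap appears in its critical-case proof (Lemma \ref{lem:local} in Appendix \ref{App:D}): the fixed-point space $X_{a,b}$ carries the constraint $\|u_t\|_{L^2(I\times\Omega)}\le b$ as part of its definition, the bound on $\partial_t\Phi$ is obtained after the change of variables $s\mapsto t-s$ in the Duhamel integral so that the time derivative falls on the nonlinearity rather than on the singular kernel (cf.\ \eqref{eq.s3-2}), and $Lu\in L^2(I\times\Omega)$ is recovered a posteriori. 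Your proposal needs the analogous subcritical version of this ingredient, with exponents adjusted since the nonlinearity only maps $H^1(L)$ into $L^{(p+1)/p}(\Omega)$.
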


\subsection{The critical case}
In this subsection we state a result on the case of the problem \eqref{eq.NLH-cri}.
For this purpose, we prepare the following:

\begin{lem}\label{lem:Str}
Suppose that $L$ satisfies Assumption {\rm B}.
Then the following assertions hold{\rm :}
\begin{itemize}
\item[(i)] For any $1\le q \le r \le \infty$, there exists a constant $C>0$ such that
\[
\|e^{-tL} \|_{L^q(\Omega) \to L^r(\Omega)} \le C t^{-\frac{d}{2}(\frac{1}{q}-\frac{1}{r})}
\]
for any $t>0$.

\item[(ii)] Let $1< q \le r \le \infty$ and
\[
\frac{1}{\gamma}= \frac{d}{2} \left(\frac{1}{q}-\frac{1}{r}\right).
\]
Then there exists a constant $C>0$ such that
\begin{equation}\label{eq.Str1}
\|e^{-tL} f\|_{L^\gamma (\mathbb R_+; L^{r}(\Omega))}
\le C
\| f \|_{L^{q}(\Omega)}
\end{equation}
for any $f \in L^{q}(\Omega)$.

\item[(iii)]
There exists a constant $C>0$ such that
\begin{equation}\label{eq.Str2}
\|e^{-tL} f\|_{L^2 (\mathbb R_+; \dot H^1(L))}
\le C
\| f \|_{L^2(\Omega)}
\end{equation}
for any $f \in L^2(\Omega)$.

\item[(iv)] Let $1\le q_1\le q_2\le \infty$ and $1 < \gamma_1, \gamma_2 < \infty$ satisfying
\[
\frac{1}{\gamma_2} = \frac{1}{\gamma_1} + \frac{d}{2} \left( \frac{1}{q_1} - \frac{1}{q_2}\right) -1,\quad
\frac{d}{2} \left( \frac{1}{q_1} - \frac{1}{q_2}\right) < 1.
\]
Then there exists a constant $C>0$ such that
\begin{equation}\label{eq.Str3}
\left\|
\int_0^t e^{-(t-s)L} F(s)\,ds
\right\|_{L^{\gamma_2} (\mathbb R_+; L^{q_2}(\Omega))}
\le C
\| F \|_{L^{\gamma_1} (\mathbb R_+; L^{q_1}(\Omega))}.
\end{equation}

\item[(v)] There exists a constant $C>0$ such that
\begin{equation}\label{eq.Str4}
\left\|
\int_0^t e^{-(t-s)L} F(s)\,ds
\right\|_{L^\infty (\mathbb R_+; \dot{H}^{1}(L))}
\le C
\| F \|_{L^{2} (\mathbb R_+; L^2(\Omega))}.
\end{equation}
\end{itemize}
\end{lem}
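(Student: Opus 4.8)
The plan is to deduce all five assertions from the Gaussian upper bound \eqref{eq.Gauss} together with the self-adjointness and non-negativity of $L$, proving the operator-norm decay (i) first and then bootstrapping to the space-time estimates.

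For (i), I would read \eqref{eq.Gauss} as two kernel bounds: $\sup_{x,y}|K_L(t;x,y)|\le Ct^{-d/2}$, which gives $\|e^{-tL}\|_{L^1\to L^\infty}\le Ct^{-d/2}$, and $\sup_x\int_\Omega|K_L(t;x,y)|\,dy\le C$ (by the substitution $y\mapsto(x-y)/\sqrt{ct}$, under which the Gaussian integral becomes a finite constant independent of $t$), which by the Schur test gives $\|e^{-tL}\|_{L^1\to L^1}+\|e^{-tL}\|_{L^\infty\to L^\infty}\le C$ uniformly in $t$. Since $L$ is self-adjoint the kernel is symmetric, so $e^{-tL}$ is self-adjoint and dualizing $L^1\to L^{q'}$ yields $L^q\to L^\infty$. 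Interpolating these endpoints by the Riesz--Thorin theorem then produces $\|e^{-tL}\|_{L^q\to L^r}\le Ct^{-\frac d2(\frac1q-\frac1r)}$ for every $1\le q\le r\le\infty$.

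The estimates (iii), (iv) and (v) are then soft. For (iii), the spectral theorem gives $\|e^{-tL}f\|_{\dot H^1(L)}^2=\langle Le^{-2tL}f,f\rangle$, and since $\int_0^\infty\lambda e^{-2t\lambda}\,dt=\tfrac12$ for $\lambda>0$, integrating in $t$ yields $\int_0^\infty\|e^{-tL}f\|_{\dot H^1(L)}^2\,dt=\tfrac12\|Pf\|_{L^2}^2\le\tfrac12\|f\|_{L^2}^2$, where $P$ is the spectral projection onto $(0,\infty)$; this is \eqref{eq.Str2}. For (v), I would dualize: for fixed $t$ and $g\in L^2(\Omega)$, self-adjointness gives $\langle L^{\frac12}\int_0^t e^{-(t-s)L}F(s)\,ds,g\rangle=\int_0^t\langle F(s),L^{\frac12}e^{-(t-s)L}g\rangle\,ds$, and Cauchy--Schwarz in $(s,x)$ together with the bound $\int_0^\infty\|L^{\frac12}e^{-\tau L}g\|_{L^2}^2\,d\tau\le\tfrac12\|g\|_{L^2}^2$ from (iii) controls this by $\tfrac1{\sqrt2}\|F\|_{L^2(\mathbb R_+;L^2)}\|g\|_{L^2}$; taking the supremum over $g$ and over $t$ gives \eqref{eq.Str4}. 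For (iv), Minkowski's inequality and (i) give $\|\int_0^t e^{-(t-s)L}F(s)\,ds\|_{L^{q_2}}\le C\int_0^t(t-s)^{-\alpha}\|F(s)\|_{L^{q_1}}\,ds$ with $\alpha=\frac d2(\frac1{q_1}-\frac1{q_2})<1$; dominating this one-sided integral by the full convolution against $|t-s|^{-\alpha}$ and applying the Hardy--Littlewood--Sobolev fractional integration inequality in time, whose exponents match $\frac1{\gamma_2}=\frac1{\gamma_1}+\alpha-1$, yields \eqref{eq.Str3}.

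I expect the one genuinely delicate point to be (ii), \eqref{eq.Str1}, because it sits exactly at the endpoint of time integrability: (i) only gives $\|e^{-tL}f\|_{L^r}\le Ct^{-1/\gamma}\|f\|_{L^q}$, and $t^{-1/\gamma}\notin L^\gamma(\mathbb R_+)$, so Minkowski fails and the weak bound must be upgraded to a strong one. My plan is to regard (i) as the weak-type estimate $e^{-tL}:L^q(\Omega)\to L^{\gamma,\infty}(\mathbb R_+;L^r(\Omega))$, valid because $t^{-1/\gamma}$ has finite $L^{\gamma,\infty}(\mathbb R_+)$ quasinorm, to prove it at two points of the scaling line $\frac1\gamma=\frac d2(\frac1q-\frac1r)$, and then to recover the strong estimate \eqref{eq.Str1} by real interpolation, using that interpolating two weak-$L^\gamma$ time spaces sharing a common value space produces the strong space $L^\gamma(\mathbb R_+;L^r(\Omega))$. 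The main work is the bookkeeping of the source and target Lorentz indices so that the interpolant lands exactly in $L^\gamma(\mathbb R_+;L^r(\Omega))$ rather than in a strictly finer or coarser Lorentz refinement; no new estimate beyond (i) is needed, but this endpoint upgrade is where care is required.
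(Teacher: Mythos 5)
Your proposal follows essentially the same route as the paper's proof, which is given only in citation form: (i) directly from the Gaussian kernel bound plus interpolation, (ii) by the Weissler--Giga method (namely assertion (i) upgraded through Marcinkiewicz/real interpolation, exactly your weak-type-to-strong-type argument), (iv) from (i) plus the Hardy--Littlewood--Sobolev inequality, and (iii), (v) by the energy and duality arguments of Wang--Huo--Hao--Guo, of which your spectral-theorem computation $\int_0^\infty \lambda e^{-2t\lambda}\,dt = \tfrac12$ and the dual Cauchy--Schwarz argument are precisely the self-adjoint-operator versions. The one caveat, shared equally by the paper since it cites the same method, is that the real-interpolation upgrade in (ii) genuinely requires $\gamma \ge q$ (equivalently $r \le qd/(d-2)$), outside of which the estimate \eqref{eq.Str1} in fact fails even for the free heat semigroup; this restricted range covers every application of \eqref{eq.Str1} made in the paper.
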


\begin{proof}
The assertion (i) is an immediate consequence of Assumption B.
The proof of (ii) is based on the method of Weissler \cite{Wei-1981} and Giga \cite{Gig-1986},
in which main tools are the assertion (i) and the Marcinkiewicz interpolation theorem
(cf. Miao, Yuan and Zhang \cite{MYZ-2008}).
The assertion (iv) can be proved by combining the assertion (i) and the Hardy-Littlewood-Sobolev inequality.
Finally, we obtain the assertions (iii) and (v)
in the same argument as in the proofs of Propositions 2.2 and 2.4 in \cite{WHHG_2011}, respectively.
So we may omit the details.
\end{proof}

We define $S(I):= L^{\frac{2(d+2)}{d-2}} (I\times \Omega)$ and 
the space-time norm by
\begin{equation}\label{eq.St-norm}
\|u\|_{S(I)} := \|u\|_{L^{\frac{2(d+2)}{d-2}} (I\times \Omega)}
=
\left( \int_I \int_\Omega |u(t,x)|^\frac{2(d+2)}{d-2}\, dx dt\right)^\frac{d-2}{2(d+2)}.
\end{equation}
for an interval $I\subset \mathbb R _+$.
Then we have the following result on local well-posedness for the problem \eqref{eq.NLH-cri}:

\begin{prop}\label{thm:LW-cri}
Suppose that $L$ satisfies Assumption {\rm B}.
Let $u_0 \in \dot H^1(L)$.
Then the following assertions hold{\rm :}
\begin{itemize}

\item[(i)] {\rm (Existence)}
There exists a maximal existence time $T_m>0$ such that
there exists a solution $u$ to \eqref{eq.NLH-cri} in $[0, T_m)\times \Omega$ with $u(0)=u_0$ in the sense of Definition~\ref{def:2}. 

\item[(ii)] {\rm (Uniqueness)} Let $T>0$. If $u_1, u_2 \in S ((0,T])$ 
satisfy the equation \eqref{eq.Duhamel2} with $u_1(0)=u_2(0)=u_0$, then $u_1=u_2$ on $[0,T]$.

\item[(iii)] {\rm (Continuous dependence on initial data)}
The function $T_m : \dot H^1(L) \to (0,\infty]$ is lower semicontinuous.
Furthermore, if $u_{0,n} \to u_0$ in $\dot H^1(L)$ as $n\to\infty$ and $u_n$ is a solution to \eqref{eq.NLH-cri} with $u_n(0)=u_{0,n}$,
then $u_n \to u$ in $L^\gamma([0,T];\dot H^1(L))$ as $n\to\infty$ for any $1\le \gamma < \infty$ and $0<T<T_m$.

\item[(iv)] {\rm (Blow-up criterion)} If $T_m < +\infty$, then $\| u\|_{S((0,T_m))} = +\infty$.

\item[(v)] {\rm (Energy identity)} 
A solution $u$ to \eqref{eq.NLH-cri} in $[0, T_m)\times \Omega$ with $u(0)=u_0$ in the sense of Definition~\ref{def:2} satisfies the identity
\begin{equation}\label{eq.energy-identity}
E_L(u(t)) + \int_0^t\int_\Omega |u_t|^2\, dx dt =  E_L(u_0),\quad 0<t<T_m.
\end{equation}

\item[(vi)] {\rm (Small data global existence)} There exists $\varepsilon_0>0$ such that
if $\|e^{-tL}u_0\|_{S(\mathbb R_+)} < \varepsilon_0$, then $T_m = +\infty$ and
\[
\| u \|_{S(\mathbb R_+)} \le 2\varepsilon_0.
\]
In particular, if $\|u_0\|_{\dot{H}^1(L)}$ is sufficiently small, then $\|e^{-tL}u_0\|_{S(\mathbb R_+)} < \varepsilon_0$.
\end{itemize}
\end{prop}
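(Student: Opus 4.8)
The plan is to run a standard Cazenave--Weissler contraction on the Duhamel map
\[
\Phi(u)(t) := e^{-tL}u_0 + \int_0^t e^{-(t-s)L}\,|u(s)|^{p-1}u(s)\,ds
\]
associated with \eqref{eq.Duhamel2}, with $p=p^\ast=\frac{d+2}{d-2}$, carried out in the scale-critical norm $\|\cdot\|_{S(I)}$ of \eqref{eq.St-norm}. The two linear inputs are the homogeneous bound
\[
\|e^{-tL}u_0\|_{S(\mathbb R_+)} \le C\|u_0\|_{L^{\frac{2d}{d-2}}(\Omega)} \le C\|u_0\|_{\dot H^1(L)},
\]
coming from \eqref{eq.Str1} in Lemma~\ref{lem:Str}(ii) with $q=\frac{2d}{d-2}$, $r=\gamma=\frac{2(d+2)}{d-2}$ together with the Sobolev embedding \eqref{eq.Sobolev2}; and the inhomogeneous bound from \eqref{eq.Str3} in Lemma~\ref{lem:Str}(iv), applied with $q_1=\gamma_1=2$ and $q_2=\gamma_2=\frac{2(d+2)}{d-2}$ (one checks these meet the scaling and range constraints of (iv)). The crucial algebraic fact matching these is that, since $2p=\frac{2(d+2)}{d-2}$, Hölder gives $\||u|^{p-1}u\|_{L^2(\mathbb R_+;L^2(\Omega))}=\|u\|_{S(\mathbb R_+)}^{p}$, so the nonlinearity maps $S$ into $L^2_{t,x}$, exactly the endpoint of \eqref{eq.Str3}.

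First I would fix a (small) $T$ and contract on the complete metric space $\{u : \|u\|_{S((0,T])}\le M\}$ with metric $\|u-v\|_{S((0,T])}$, where $M=2\|e^{-tL}u_0\|_{S((0,T])}$. Using the pointwise inequality $\big||a|^{p-1}a-|b|^{p-1}b\big|\le C(|a|^{p-1}+|b|^{p-1})|a-b|$ (valid for $p>1$) and the identity above, \eqref{eq.Str3} yields $\|\Phi(u)\|_{S((0,T])}\le \|e^{-tL}u_0\|_{S((0,T])}+C\|u\|_{S((0,T])}^{p}$ and a matching Lipschitz estimate for $\Phi(u)-\Phi(v)$. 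Since $\|e^{-tL}u_0\|_{S((0,T])}\to0$ as $T\to0$ (absolute continuity of the integral, the full-line norm being finite), for small $T$ the map $\Phi$ is a contraction and produces a unique fixed point $u\in S((0,T])$, proving existence (i). The same argument run directly on $I=\mathbb R_+$ under the hypothesis $\|e^{-tL}u_0\|_{S(\mathbb R_+)}<\varepsilon_0$ gives $\|u\|_{S(\mathbb R_+)}\le2\varepsilon_0$, hence (vi); the final sentence of (vi) is just the homogeneous bound above. To upgrade regularity to Definition~\ref{def:2}, note $F:=|u|^{p-1}u\in L^2((0,T]\times\Omega)$, so \eqref{eq.Str4} places the Duhamel term in $L^\infty((0,T];\dot H^1(L))$, while $e^{-tL}$ is a contraction and strongly continuous on $\dot H^1(L)=D(L^{1/2})$; a routine limiting argument then gives $u\in C([0,T];\dot H^1(L))$. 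For $u_t,Lu\in L^2((0,T']\times\Omega)$ I would invoke maximal $L^2$-regularity: a non-negative self-adjoint $L$ generates a bounded analytic semigroup on the Hilbert space $L^2(\Omega)$, so the inhomogeneous problem with zero data enjoys maximal regularity, and the trace space for nonzero data is precisely $(L^2,D(L))_{1/2,2}=D(L^{1/2})=\dot H^1(L)$; since $u_0\in\dot H^1(L)$ (the homogeneous part is also covered by \eqref{eq.Str2}) and $F\in L^2_{t,x}$, this yields $u_t,Lu\in L^2$.

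With this regularity, the energy identity \eqref{eq.energy-identity} of (v) follows by pairing $\partial_t u+Lu=|u|^{p-1}u$ with $\overline{u_t}$ in $L^2(\Omega)$, integrating over $(0,t)$, and taking real parts, the terms $\operatorname{Re}\langle Lu,u_t\rangle=\tfrac12\tfrac{d}{dt}\|u\|_{\dot H^1(L)}^2$ and $\operatorname{Re}\langle|u|^{p-1}u,u_t\rangle=\tfrac{1}{p+1}\tfrac{d}{dt}\|u\|_{L^{p+1}}^{p+1}$ being legitimate because $u_t,Lu\in L^2$ and $u\in C([0,T'];\dot H^1(L))$. Uniqueness (ii) is the Lipschitz estimate itself: on any subinterval where both solutions have $S$-norm below the contraction threshold they must agree, and a finite subdivision of $[0,T]$ into such intervals propagates $u_1=u_2$; continuous dependence and lower semicontinuity of $T_m$ in (iii) follow from the same estimates by a standard perturbation argument.

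The step I expect to demand the most care is the blow-up criterion (iv), together with the re-entry from the critical $S$-control into the energy class required to iterate. Assuming $T_m<\infty$ but $\|u\|_{S((0,T_m))}<\infty$, I would show the Duhamel tail $\big\|\int_s^t e^{-(t-\tau)L}F(\tau)\,d\tau\big\|_{S((s,T_m))}\to0$ as $s\uparrow T_m$, so that $\|e^{-(t-s)L}u(s)\|_{S((s,T_m))}<\varepsilon_0$ for $s$ near $T_m$; the small-data construction then yields a solution of fixed lifespan starting from $u(s)$, extending $u$ beyond $T_m$ and contradicting maximality. The delicate point is justifying that $u(s)$ converges in $\dot H^1(L)$ as $s\uparrow T_m$ so the restart is legitimate --- this is exactly where the interplay between the critical norm $\|\cdot\|_{S}$ and the subcritical energy norm, mediated by \eqref{eq.Str4} and the maximal-regularity bound, must be handled carefully, since the energy-critical scaling leaves no room to spare.
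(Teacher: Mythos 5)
Your proposal is correct, and its skeleton --- contraction in the critical norm $\|\cdot\|_{S}$ using \eqref{eq.Str1}, \eqref{eq.Str3} and the Sobolev inequality \eqref{eq.Sobolev2}, uniqueness by subdividing $[0,T]$ into intervals of small $S$-norm, continuous dependence via a perturbation argument (which the paper formalizes as Lemma \ref{prop:perturbation} and Corollary \ref{cor.maxtime}), the blow-up criterion by a Duhamel-tail estimate plus restart, small-data global existence by the same contraction on $\mathbb R_+$, and the energy identity by pairing the equation with $\overline{u_t}$ --- is the same as the paper's Appendix \ref{App:D}. The one genuinely different ingredient is how you obtain the regularity $u_t, Lu \in L^2([0,T']\times\Omega)$ required by Definition \ref{def:2}. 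The paper builds it into the fixed point: it contracts on $X_{a,b}=\{\|u\|_{S(I)}\le a,\ \|u_t\|_{L^2(I\times\Omega)}\le b\}$ with metric $\|u-v\|_{S(I)}$, differentiating the Duhamel formula in time (after the change of variables $s\mapsto t-s'$) and closing the $\|u_t\|_{L^2}$ bound with \eqref{eq.Str1}, \eqref{eq.Str2} and \eqref{eq.Str3}; the price is a nontrivial completeness proof for $(X_{a,b},\mathrm{d})$ via Banach--Alaoglu and Fatou. You instead contract on the plain $S$-ball (automatically complete as a closed ball of a Banach space) and upgrade regularity a posteriori: the free part via the square-function bound \eqref{eq.Str2}, the Duhamel part via de Simon's maximal $L^2$-regularity for analytic semigroups on Hilbert spaces. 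Both routes work; yours is lighter on the fixed-point side but imports maximal regularity, which the paper avoids so as to stay entirely within Lemma \ref{lem:Str}.

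Two corrections. First, the ``delicate point'' you flag in (iv) is spurious: no convergence of $u(s)$ in $\dot H^1(L)$ as $s\uparrow T_m$ is needed, because the restart takes place at a \emph{fixed} time $t_0<T_m$, where $u(t_0)\in\dot H^1(L)$ holds automatically. What you do need --- and what the paper supplies --- is that the smallness $\|e^{-tL}u(t_0)\|_{S((0,T_m-t_0))}<\delta/2$ persists on a slightly longer interval, i.e.\ $\|e^{-tL}u(t_0)\|_{S((0,T_m-t_0+\varepsilon_0))}<\delta$ for some $\varepsilon_0>0$; this follows from $\|e^{-tL}u(t_0)\|_{S(\mathbb R_+)}\le C\|u(t_0)\|_{\dot H^1(L)}<\infty$ together with continuity of the $S$-norm in the endpoint of the interval, and it is exactly what lets the local existence lemma produce a solution living strictly beyond $T_m$ (your phrase ``fixed lifespan'' has no meaning at critical scaling --- the lifespan is controlled by smallness of the free evolution, not by the size of the data). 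Second, the identification $(L^2,\mathcal D(L))_{1/2,2}=\mathcal D(L^{1/2})=\dot H^1(L)$ is wrong as stated: elements of the homogeneous space $\dot H^1(L)$ need not belong to $L^2(\Omega)$, so $\dot H^1(L)\neq\mathcal D(L^{1/2})$ in general. This is harmless in your argument only because you split off $e^{-tL}u_0$ and treat it by \eqref{eq.Str2} applied to $L^{1/2}u_0\in L^2(\Omega)$, reserving maximal regularity for the zero-data Duhamel term; phrased that way, the step is correct.
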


We note that $T_m$ depends on the profile of $u_0$, not only the size of $u_0$. 
We apply the fixed point argument and space-time estimates in Lemma \ref{lem:Str} 
to prove Proposition~\ref{thm:LW-cri}. 
For the details of proof, see Appendix \ref{App:D}.

\section{Variational estimates}\label{sec:4}

In this section we show some lemmas on the elementary variational inequalities.
We recall $\mathcal E(L) = H^1(L)$ or $\dot H^1(L)$, and choose $\mathcal E(L) = H^1(L)$ in the case \eqref{eq.NLH-sub}
and $\mathcal E(L) = \dot H^1(L)$ in the case \eqref{eq.NLH-cri}.
In this and next sections, we consider only the case $E_L (u_0) < l_L$, since 
the problem in another case $E_L (u_0) = l_L$ and $J_L(u_0)\not =0$
is reduced to the case $E_L (u_0) < l_L$ (see Remarks \ref{rem:equal} and \ref{rem:equal2}).\\

Let us define the stable and unstable sets 
\[
\mathcal M^{+}_L :=
\{
f \in \mathcal E(L) : E_L (f) < l_L,\ J_L (f) > 0
\}\cup\{0\},
\]
\[
\mathcal M^{-}_L :=
\{
f \in \mathcal E(L) : E_L (f) < l_L,\ J_L (f) < 0
\},
\]
respectively.
In the following, we denote by $u=u(t)$ the solution to \eqref{eq.NLH-sub} or \eqref{eq.NLH-cri} with initial data $u_0$.
The following lemma states that $\mathcal M^{\pm}_L$ are invariant under the semiflow associated to \eqref{eq.NLH-sub} or \eqref{eq.NLH-cri}.

\begin{lem}\label{lem:Mpm}
If $u_0 \in \mathcal M^{\pm}_L$,
then $u(t) \in \mathcal M^{\pm}_L$ for any $t \in [0,T_m)$,
where double-sign corresponds.
\end{lem}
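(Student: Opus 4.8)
The plan is to exploit the two conserved/monotone quantities available to us: the energy $E_L(u(t))$, which by the energy identity (Proposition \ref{thm:LW-sub}(v) or Proposition \ref{thm:LW-cri}(v)) is nonincreasing, and the continuity of $u$ in $\mathcal E(L)$, which lets me argue that the sign of $J_L(u(t))$ cannot change without first vanishing. Since $E_L(u(t)) \le E_L(u_0) < l_L$ for all $t$, the energy stays strictly below the mountain pass level throughout the lifespan, so it suffices to control the sign of $J_L(u(t))$. I first treat the stable set $\mathcal M_L^+$. If $u_0 = 0$ the claim is trivial, so assume $J_L(u_0) > 0$. The key observation is the characterization \eqref{eq.l_L2}, namely $l_L = \inf_{f \in \mathcal N_L} E_L(f)$: any nonzero $f$ with $J_L(f) = 0$ lies on the Nehari manifold $\mathcal N_L$ and hence satisfies $E_L(f) \ge l_L$. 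This forbids the solution from reaching the boundary $J_L = 0$ (with $u \ne 0$) while its energy is strictly below $l_L$.

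The main step is a continuity-plus-barrier argument. Suppose toward a contradiction that $J_L(u(t)) \le 0$ for some $t \in (0,T_m)$; let $t_* := \inf\{ t : J_L(u(t)) \le 0 \}$. By continuity of $t \mapsto u(t)$ in $\mathcal E(L)$ and the continuity of the functional $J_L$ on $\mathcal E(L)$ (which follows from the Sobolev embedding in Proposition \ref{prop:Sobolev}, controlling the $L^{p+1}$ term by the $\mathcal E(L)$ norm), we have $J_L(u(t_*)) = 0$ and $J_L(u(t)) > 0$ for $t < t_*$. I must rule out $u(t_*) = 0$: on the interval $[0,t_*)$ one has $J_L(u(t)) > 0$, which together with the definition of $J_L$ and the energy gives a lower bound $\tfrac{p-1}{2(p+1)}\|u(t)\|_{\mathcal E(L)}^2 \le E_L(u(t)) \le E_L(u_0) < l_L$ and, more importantly, a strictly positive lower bound on $\|u(t)\|_{\mathcal E(L)}$ away from zero — indeed on $J_L > 0$ the Sobolev inequality forces $\|u\|_{\mathcal E(L)}^2 < \|u\|_{L^{p+1}}^{p+1} \le C\|u\|_{\mathcal E(L)}^{p+1}$, hence $\|u(t)\|_{\mathcal E(L)}^{p-1} > C^{-1}$, a bound stable under the limit $t \to t_*$. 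Thus $u(t_*) \ne 0$, so $u(t_*) \in \mathcal N_L$ and $E_L(u(t_*)) \ge l_L$, contradicting $E_L(u(t_*)) \le E_L(u_0) < l_L$. Therefore $J_L(u(t)) > 0$ for all $t$, and the energy bound keeps $E_L(u(t)) < l_L$, so $u(t) \in \mathcal M_L^+$.

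For the unstable set $\mathcal M_L^-$ the argument is symmetric and in fact simpler, since there is no need to separate $u(t)$ from $0$: if $J_L(u_0) < 0$, then any first time $t_*$ at which $J_L(u(t_*)) = 0$ would again give either $u(t_*) = 0$ or $u(t_*) \in \mathcal N_L$. The former is impossible because on $J_L < 0$ the Sobolev inequality yields $\|u(t)\|_{\mathcal E(L)}^2 < \|u\|_{L^{p+1}}^{p+1} \le C \|u(t)\|_{\mathcal E(L)}^{p+1}$, again bounding $\|u(t)\|_{\mathcal E(L)}$ away from $0$ uniformly up to $t_*$; the latter contradicts $E_L(u(t_*)) \ge l_L > E_L(u_0) \ge E_L(u(t_*))$. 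Hence $J_L(u(t)) < 0$ persists and $u(t) \in \mathcal M_L^-$ throughout $[0,T_m)$. The step I expect to be the main obstacle is the clean justification that $u(t_*) \ne 0$ — i.e. extracting the uniform-in-$t$ positive lower bound on $\|u(t)\|_{\mathcal E(L)}$ from the one-sided Nehari condition via the Sobolev inequality — since everything else reduces to continuity of $J_L$ and monotonicity of $E_L$, both of which are supplied by the local theory and Proposition \ref{prop:Sobolev}.
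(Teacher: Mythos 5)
Your overall strategy---energy monotonicity from the energy identity, continuity of $t\mapsto J_L(u(t))$, and the characterization \eqref{eq.l_L2} of $l_L$ as the infimum of $E_L$ over the Nehari manifold $\mathcal N_L$---is exactly the paper's argument. Your treatment of the unstable set $\mathcal M^-_L$ is correct, and in fact more careful than the paper's one-line "similarly": on $\{J_L<0\}$ the inequality
$\|u\|_{\mathcal E(L)}^2 < \|u\|_{L^{p+1}(\Omega)}^{p+1} \le S_{p+1}^{p+1}\|u\|_{\mathcal E(L)}^{p+1}$
really does bound $\|u(t)\|_{\mathcal E(L)}$ away from zero uniformly on $[0,t_*)$, so by continuity $u(t_*)\neq 0$, hence $u(t_*)\in\mathcal N_L$ and $E_L(u(t_*))\ge l_L$ contradicts $E_L(u(t_*))\le E_L(u_0)<l_L$.

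The gap is in the stable case $\mathcal M^+_L$. You assert that "on $J_L>0$ the Sobolev inequality forces $\|u\|_{\mathcal E(L)}^2 < \|u\|_{L^{p+1}}^{p+1} \le C\|u\|_{\mathcal E(L)}^{p+1}$," but this is backwards: $J_L(u)>0$ means precisely $\|u\|_{\mathcal E(L)}^2 > \|u\|_{L^{p+1}(\Omega)}^{p+1}$. No positive lower bound on $\|u(t)\|_{\mathcal E(L)}$ can be extracted on $\{J_L>0\}$, and none can hold, since every nonzero $f$ with $\|f\|_{\mathcal E(L)}$ small enough satisfies $J_L(f)>0$ (by Sobolev, $\|f\|_{L^{p+1}(\Omega)}^{p+1}\le S_{p+1}^{p+1}\|f\|_{\mathcal E(L)}^{p+1}=o(\|f\|_{\mathcal E(L)}^2)$). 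So your step ruling out $u(t_*)=0$ fails---and it should, because $u(t_*)=0$ is not a contradiction of anything: $0\in\mathcal M^+_L$ by definition, which also means your contradiction hypothesis "$J_L(u(t))\le 0$ for some $t$" is not the negation of the conclusion. The repair is to set up the contradiction as the paper does: suppose $u(t_0)\in\mathcal M^-_L$, i.e.\ $J_L(u(t_0))<0$, for some $t_0$, and let $t_*<t_0$ be the first zero of $J_L(u(\cdot))$. If $u(t_*)\neq 0$, the Nehari argument gives the contradiction; if $u(t_*)=0$, then forward uniqueness (Proposition \ref{thm:LW-sub}(ii), resp.\ Proposition \ref{thm:LW-cri}(ii), comparing with the zero solution) gives $u\equiv 0$ on $[t_*,T_m)$, contradicting $J_L(u(t_0))<0$. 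This case distinction closes your proof and, incidentally, fills a point the paper itself glosses over, since it applies \eqref{eq.l_L2} at the time $t_1$ without checking $u(t_1)\neq 0$.
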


\begin{proof}
Let $u_0 \in \mathcal M^{+}_L$.
Then $u(t)\in \mathcal M^{+}_L \cup \mathcal M^{-}_L$ for any $t \in [0,T_m)$,
since
\begin{equation}\label{eq.ei}
E_L (u(t)) \le E_L (u_0)
\end{equation}
for any $t \in [0,T_m)$ by \eqref{eq.energy_diss}.
Suppose that there exists a time $t_0 \in (0,T_m)$ such that $u(t_0) \in \mathcal M^-_L$.
Then, since $J_L (u(\cdot))$ is continuous on $[0,T_m)$, there exists a time $t_1 \in [0,t_0)$ such that $J_L (u(t_1)) = 0$.
Hence we see from \eqref{eq.l_L2} and \eqref{eq.ei} that
\[
l_L \le E_L (u(t_1))\le E_L (u_0).
\]
However this contradicts the assumption $E_L (u_0) <l_L$.
Thus
$u(t)\in \mathcal M^{+}_L$ for any $t \in [0,T_m)$.
Similarly, the case of $\mathcal M^-_L$ is also proved.
The proof of Lemma \ref{lem:Mpm} is finished.
\end{proof}

\begin{lem}\label{lem:ET}
If $u_0\in \mathcal M^+_L$, then there exists $\delta>0$ such that
\begin{equation}\label{eq.ET}
J_L (u(t)) \ge \delta \|u(t)\|_{\mathcal E(L)}^2
\end{equation}
for any $t\in [0,T_m)$.
\end{lem}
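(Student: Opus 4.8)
The plan is to run the standard Payne--Sattinger argument, exploiting that along the flow the energy is nonincreasing and the solution remains in $\mathcal M^+_L$. First I would invoke Lemma \ref{lem:Mpm} to conclude that $u(t)\in\mathcal M^+_L$ for every $t\in[0,T_m)$; in particular, for each such $t$ either $u(t)=0$, in which case \eqref{eq.ET} holds trivially for any $\delta$, or else $E_L(u(t))<l_L$ and $J_L(u(t))>0$. It therefore suffices to produce a single $\delta>0$, independent of $t$, that works on the set of times where $u(t)\neq0$. Recall from the convention at the start of this section that we are in the regime $E_L(u_0)<l_L$, and note that $J_L(u_0)>0$ forces $E_L(u_0)>0$ in the nontrivial case.

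Writing $a(t):=\|u(t)\|_{\mathcal E(L)}^2$ and $b(t):=\|u(t)\|_{L^{p+1}(\Omega)}^{p+1}$, so that $J_L(u(t))=a(t)-b(t)$ and $E_L(u(t))=\tfrac12 a(t)-\tfrac{1}{p+1}b(t)$, the second step is to derive a uniform a priori bound on $a(t)$. Since $J_L(u(t))>0$ gives $b(t)<a(t)$, I obtain $E_L(u(t))>\frac{p-1}{2(p+1)}a(t)$, and combining this with the energy dissipation $E_L(u(t))\le E_L(u_0)$ from \eqref{eq.energy_diss} yields
\[
a(t) < \frac{2(p+1)}{p-1}\,E_L(u_0) =: C_0
\]
for all $t$ with $u(t)\neq0$. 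The strict inequality $E_L(u_0)<l_L$ together with the characterization \eqref{eq.l_L2} of $l_L$ is exactly what makes $C_0$ strictly smaller than the critical threshold $S_{p+1}^{-2(p+1)/(p-1)}$.

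The third step is to feed the Sobolev inequality \eqref{eq.Sobolev1} (or \eqref{eq.Sobolev2} in the critical case), written as $b(t)\le S_{p+1}^{p+1}\,a(t)^{(p+1)/2}$, into $J_L$:
\[
J_L(u(t)) = a(t)-b(t) \ge a(t)\left(1 - S_{p+1}^{p+1}\,a(t)^{(p-1)/2}\right).
\]
Using the bound $a(t)<C_0$ from the previous step, the factor in parentheses is bounded below by $\delta := 1 - S_{p+1}^{p+1}\,C_0^{(p-1)/2}$, and the defining identity \eqref{eq.l_L2} shows precisely that $\delta>0$ because $C_0 < S_{p+1}^{-2(p+1)/(p-1)}$ makes $S_{p+1}^{p+1}C_0^{(p-1)/2}<1$. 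This gives \eqref{eq.ET} with this $\delta$, which depends only on $E_L(u_0)$ and the Sobolev constant $S_{p+1}$, not on $t$.

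The main point to get right is the uniformity. The bare positivity $J_L(u(t))>0$ degenerates as $a(t)$ approaches the threshold $S_{p+1}^{-2(p+1)/(p-1)}$, so positivity alone would not furnish a $t$-independent $\delta$. The crucial input that rescues uniformity is the strict energy gap $E_L(u_0)<l_L$ propagated by dissipation, which keeps $a(t)$ bounded away from the threshold uniformly in $t$. I note also that the same computation handles both the subcritical and critical cases verbatim, since $E_L$, $J_L$, $l_L$, and the relevant Sobolev inequality all share the identical algebraic form.
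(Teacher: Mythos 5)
Your proof is correct and follows essentially the same route as the paper's: both combine the invariance of $\mathcal M^+_L$ (Lemma \ref{lem:Mpm}), the energy dissipation $E_L(u(t))\le E_L(u_0)<l_L$, and the Sobolev inequality to trap $\|u(t)\|_{\mathcal E(L)}^2$ strictly below the threshold $y_C=S_{p+1}^{-2(p+1)/(p-1)}$, and then read off uniform coercivity of $J_L$ from $J_L(u(t))\ge a(t)\bigl(1-S_{p+1}^{p+1}a(t)^{(p-1)/2}\bigr)$. The only difference is presentational: you replace the paper's auxiliary functions $F$ and $G$ (and the monotonicity argument for $F$) by a direct algebraic computation, which also keeps the norm-versus-norm-squared bookkeeping cleaner than the paper's own write-up.
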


\begin{proof}
Since $E_L (u_0) < l_L$, there exists $\delta_0>0$ such that
\begin{equation}\label{eq.ass1}
E_L (u_0) \le (1-\delta_0) l_L.
\end{equation}
Consider the function
\[
F(y) := \frac12 y - \frac{S_{p+1}^{p+1}}{p+1} y ^{\frac{p+1}{2}}, \quad y\ge0.
\]
It is readily seen that
$F'(y)=0$ if and only if $y=y_C$, where
\[
y_C:= S_{p+1}^{-\frac{2(p +1)}{p-1}}.
\]
Then we see that
\begin{equation}\label{eq.y_C}
F(y_C)  = \frac{p-1}{2(p +1)} S_{p+1}^{-\frac{2(p +1)}{p-1}} = l_L
\quad \text{and}\quad
F''(y_C)  < 0.
\end{equation}
Hence it follows from \eqref{eq.Sobolev1}, \eqref{eq.ei},
\eqref{eq.ass1} and \eqref{eq.y_C} that
\[
F(\|u(t)\|_{\mathcal E(L)}) \le E_L (u(t)) \le E_L (u_0) \le (1-\delta_0) l_L = (1-\delta_0) F(y_C).
\]
for any $t \in [0,T_m)$.
Note that
\begin{equation}\label{eq.y_C2}
\|u(t)\|_{\mathcal E(L)} < y_C \quad \text{for any $t \in [0,T_m)$,}
\end{equation}
since
\[
\|u(t)\|_{\mathcal E(L)}
\le \frac{2(p+1)}{p-1} E_L(u(t))
<  \frac{2(p+1)}{p-1} l_L = S_{p+1}^{-\frac{2(p +1)}{p-1}} = y_C.
\]
Since $F$ is strictly increasing on $(0, y_C)$, there exists $\delta_1>0$ such that
\begin{equation}\label{eq.y_0}
\|u(t)\|_{\mathcal E(L)} \le (1-\delta_1)y_C
\end{equation}
for any $t \in [0,T_m)$.
Next, we consider the function
\[
G(y) := y - S_{p+1}^{p+1} y^\frac{p+1}{2}.
\]
Then
$G(y)=0$ if and only if $y=0$ or $y=y_C$.
Furthermore, $G'(0) = 1$ and $G'(y_C)=-(p-1)/2$.
Hence
\begin{equation}\label{eq.G}
G(y) \ge C\min{\{y,y_C-y\}}
\end{equation}
for any $0<y<y_C$.
Therefore, noting \eqref{eq.y_C2}, and taking $y = \|u(t)\|_{\mathcal E(L)}$, we deduce from \eqref{eq.y_0} and \eqref{eq.G} that
\[
J_L(u(t))
\ge G(\|u(t)\|_{\mathcal E(L)})
\ge C\min{\{\|u(t)\|_{\mathcal E(L)},y_C-\|u(t)\|_{\mathcal E(L)}\}}
\ge C \delta_1 \|u(t)\|_{\mathcal E(L)}
\]
for any $t \in [0,T_m)$.
Thus \eqref{eq.ET} is proved. The proof of Lemma \ref{lem:ET} is complete.
\end{proof}

\begin{lem}\label{lem:J_neg}
If $u_0 \in \mathcal M^-_L$, then
\[
J_L (u(t)) < -(p+1) \{l_L - E_L(u(t))\}
\]
for any $t \in (0,T_m)$.
\end{lem}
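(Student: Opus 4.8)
The plan is to reduce the claimed inequality to a pointwise lower bound on the energy-space norm of the solution and then establish that bound using the invariance of the unstable set together with the Sobolev inequality. The starting point is the purely algebraic identity relating the three functionals: expanding the definitions of $J_L$ and $E_L$ gives
\[
J_L(u(t)) + (p+1)\{l_L - E_L(u(t))\}
= (p+1)\,l_L - \frac{p-1}{2}\,\|u(t)\|_{\mathcal E(L)}^2 ,
\]
since the $L^{p+1}$-terms cancel and only the quadratic part survives. Recalling from \eqref{eq.l_L2} that $(p+1)l_L = \frac{p-1}{2}\,y_C$ with $y_C = S_{p+1}^{-\frac{2(p+1)}{p-1}}$, the right-hand side equals $-\frac{p-1}{2}\big(\|u(t)\|_{\mathcal E(L)}^2 - y_C\big)$. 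Hence the desired conclusion $J_L(u(t)) < -(p+1)\{l_L - E_L(u(t))\}$ is equivalent to the single inequality $\|u(t)\|_{\mathcal E(L)}^2 > y_C$ for every $t \in (0,T_m)$.

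To obtain this lower bound, I would first invoke Lemma~\ref{lem:Mpm}: since $u_0 \in \mathcal M^-_L$, the solution satisfies $u(t) \in \mathcal M^-_L$ for all $t \in [0,T_m)$, so in particular $u(t) \neq 0$ and $J_L(u(t)) < 0$, i.e. $\|u(t)\|_{\mathcal E(L)}^2 < \|u(t)\|_{L^{p+1}(\Omega)}^{p+1}$. Next I would feed in the Sobolev inequality from Proposition~\ref{prop:Sobolev}, in the form $\|u(t)\|_{L^{p+1}(\Omega)}^{p+1} \le S_{p+1}^{p+1}\,\|u(t)\|_{\mathcal E(L)}^{p+1}$, to get
\[
\|u(t)\|_{\mathcal E(L)}^2 < S_{p+1}^{p+1}\,\|u(t)\|_{\mathcal E(L)}^{p+1}.
\]
Since $\|u(t)\|_{\mathcal E(L)} > 0$, dividing by $\|u(t)\|_{\mathcal E(L)}^2$ and raising to the power $\frac{2}{p-1}$ yields exactly $\|u(t)\|_{\mathcal E(L)}^2 > S_{p+1}^{-\frac{2(p+1)}{p-1}} = y_C$, completing the argument.

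This is essentially a one-line variational computation once the pieces are assembled, so I do not expect a serious obstacle; the only point requiring care is bookkeeping the strict inequalities (guaranteeing $u(t)\neq 0$ from membership in $\mathcal M^-_L$ so that the division is legitimate, and propagating strictness through the algebraic identity to obtain the strict bound in the statement). The structural mirror to Lemma~\ref{lem:ET} is worth noting: there the reverse estimate $\|u(t)\|_{\mathcal E(L)}^2 < y_C$ held on the stable set, whereas here the Nehari sign flip forces the opposite inequality, which is precisely what drives the finite-time blow-up in Theorem~\ref{thm:GB-sub}(ii).
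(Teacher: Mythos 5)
Your proof is correct, and it takes a genuinely different route from the paper's. The paper argues along the scaling ray: it sets $K(\lambda) := E_L(e^{\lambda}u(t))$, derives the differential inequality $K''(\lambda) - (p+1)K'(\lambda) = -(p-1)e^{2\lambda}\|u(t)\|_{\mathcal E(L)}^2 < 0$, uses $J_L(u(t)) = K'(0) < 0$ to produce a Nehari point $\lambda_0 < 0$ with $K'(\lambda_0)=0$ and hence $K(\lambda_0) \ge l_L$, and integrates over $(\lambda_0,0]$. You instead reduce the claim, via the exact cancellation $J_L(u) - (p+1)E_L(u) = -\tfrac{p-1}{2}\|u\|_{\mathcal E(L)}^2$ and the explicit value $(p+1)l_L = \tfrac{p-1}{2}\,S_{p+1}^{-\frac{2(p+1)}{p-1}}$, to the single bound $\|u(t)\|_{\mathcal E(L)}^2 > y_C$, which follows from Lemma~\ref{lem:Mpm} (so $J_L(u(t))<0$) combined with the Sobolev inequality; your handling of strictness and of the division by $\|u(t)\|_{\mathcal E(L)}^2>0$ is sound, since $J_L(u(t))<0$ together with Sobolev forces $u(t)\neq 0$. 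The trade-off: your argument is shorter and makes the geometry transparent --- it exhibits the lemma as the exact mirror of the bound $\|u(t)\|_{\mathcal E(L)}^2 < y_C$ underlying Lemma~\ref{lem:ET}, with the Nehari sign deciding which side of the sphere of radius $y_C^{1/2}$ the trajectory lives on --- but it consumes the full strength of \eqref{eq.l_L2}, namely the closed-form expression of $l_L$ through the best Sobolev constant (the Appendix~\ref{App:C} computation). The paper's convexity argument uses only the weaker fact $E_L(f)\ge l_L$ for $f\in\mathcal N_L$, which is essentially immediate from the mountain-pass definition \eqref{eq.l_L}; it would therefore carry over to nonlinearities or settings where the Nehari infimum admits no such explicit formula.
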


\begin{proof}
Let $t \in [0,T_m)$ be fixed.
By Lemma \ref{lem:Mpm}, we have
\begin{equation}\label{eq.M_-}
J_L (u(t)) < 0\quad \text{for any }t \in [0,T_m).
\end{equation}
Define the function
\[
K(\lambda) := E_L (e^{\lambda}u(t)),\quad \lambda\in\mathbb R.
\]
Then we calculate
\begin{equation}\label{eq.K'}
K'(\lambda)= e^{2\lambda}\|u(t)\|_{\mathcal E(L)}^2
- e^{(p+1)\lambda} \|u(t)\|_{L^{p+1}(\Omega)}^{p+1},
\end{equation}
\[
K''(\lambda)= 2e^{2\lambda}\|u(t)\|_{\mathcal E(L)}^2
- (p+1)e^{(p+1)\lambda} \|u(t)\|_{L^{p+1}(\Omega)}^{p+1}.
\]
Hence
\begin{equation}\label{eq.key}
K''(\lambda) - (p+1)K'(\lambda) = -(p-1)e^{2\lambda}\|u(t)\|_{\mathcal E(L)}^2 <0
\end{equation}
for any $\lambda\in\mathbb R$, since $p>1$.
We note from \eqref{eq.M_-} and \eqref{eq.K'} that  $K'$ is continuous in $\lambda$ and
\[
K'(0)=J_L (u(t)) < 0, \quad K'(\lambda)>0
\quad \text{for $-1 \ll \lambda<0$}.
\]
Then there exists $\lambda_0 <0$ such that $K'(\lambda_0) = 0$, which implies that $e^{\lambda_0}u (t)\in \mathcal N_L$ and $K(\lambda_0) \ge l_L$.
Integrating the inequality \eqref{eq.key} for the interval $(\lambda_0,0]$, we have
\[
K'(0) - K'(\lambda_0) < (p+1)( K(0) - K(\lambda_0)).
\]
From the above, we obtain
\[
J_L(u(t))
= K'(0) - K'(\lambda_0) < (p+1)( K(0) - K(\lambda_0))
\le (p+1)\{ E_L(u(t)) - l_L\}.
\]
Thus we conclude Lemma \ref{lem:J_neg}.
\end{proof}

\section{Proofs of Theorems \ref{thm:GB-sub} and \ref{thm:GB-cri}}\label{sec:5}

First we prove Theorem \ref{thm:GB-sub}.

\begin{proof}[Proof of {\rm (i)} in Theorem \ref{thm:GB-sub}]
Let $u_0 \in  \mathcal M^+_L$.
By the definitions of $E_L$ and $J_L$, we calculate
\[
J_L (u(t))= -\frac{p-1}{2} \|u(t)\|_{H^1(L)}^2 + (p +1) E_L (u(t)).
\]
Since $J_L (u(t)) \ge 0$ for any $t \in [0,T_m)$ by Lemma \ref{lem:Mpm},
we have
\[
\|u(t)\|_{H^1(L)}^2
\le
\frac{2(p +1)}{p-1} E_L (u(t))
\le
\frac{2(p +1)}{p-1} E_L (u_0)
\]
for any $t \in [0,T_m)$.
Then $T_m=+\infty$ by (iii) in Proposition \ref{thm:LW-sub}.
Furthermore, we have
\[
\|u(t)\|_{L^2(\Omega)}^2
+
2 \int_0^t
J_L (u(s))\,ds
=
\|u_0\|_{L^2(\Omega)}^2
\]
for any $t>0$ by \eqref{eq.J_L2}.
Then we find from Lemma \ref{lem:ET} that
\[
2\delta \|u\|_{L^2(\mathbb R_+; H^1(L))}^2
\le \sup_{t\ge0} \left(\|u(t)\|_{L^2(\Omega)}^2 +2\delta \|u\|_{L^2((0,t); H^1(L))}^2\right) \le \|u_0\|_{L^2(\Omega)}^2,
\]
which proves that
\[
\lim_{t \to \infty} \|u(t)\|_{H^1(L)} = 0.
\]
Thus we conclude the assertion (i) in Theorem \ref{thm:GB-sub}.
\end{proof}

The proof of (ii) is done by the argument of proof of Proposition 6.1 in \cite{GR-2018}.
For completeness, we give the proof.

\begin{proof}[Proof of {\rm (ii)} in Theorem \ref{thm:GB-sub}]
Let $u_0 \in  \mathcal M^-_L$.
Define
\[
I(t) := \int_0^t \|u(s)\|_{L^2(\Omega)}^2\,ds + A,\quad t\in [0,T_m),
\]
where $A>0$, which is chosen later.
Then
\[
I'(t) = \|u(t)\|_{L^2(\Omega)}^2
\quad \text{and} \quad
I''(t) = -2 J_L(u(t)).
\]
By Schwarz' inequality, we estimate
\[
\begin{split}
I'(t)^2 & = \left(\| u_0\|_{L^2(\Omega)}^2 + 2 \mathrm{Re}\int_0^t (u,u_t)_{L^2(\Omega)}\,ds\right)^2\\
& \le (1+\varepsilon^{-1}) \| u_0\|_{L^2(\Omega)}^4 + 4(1+\varepsilon)\left(\int_0^t (u,u_t)_{L^2(\Omega)}\,ds\right)^2\\
& \le (1+\varepsilon^{-1}) \| u_0\|_{L^2(\Omega)}^4 + 4(1+\varepsilon)
\left(\int_0^t \| u(s)\|_{L^2(\Omega)}^2\,ds\right) \left(\int_0^t \| u_t (s)\|_{L^2(\Omega)}^2\,ds\right)
\end{split}
\]
for any $\varepsilon>0$,
where $(\cdot,\cdot)_{L^2(\Omega)}$ stands for the inner product of $L^2(\Omega)$.
Furthermore, it follows from Lemma \ref{lem:J_neg} that
\[
I''(t) \ge 2(p+1) \{ l_L - E_L(u(t))\}
\ge
2(p+1) \left( l_L - E_L(u_0) + \int_0^t \| u_t (s)\|_{L^2(\Omega)}^2\,ds \right).
\]
Let $\alpha>0$. By summarizing the above estimates,
we have
\begin{equation}\label{eq.keypoint}
\begin{split}
I''(t)&I(t) - (1+\alpha)I'(t)^2 \\
\ge &\,
2(p+1) \left( l_L - E_L(u_0) + \int_0^t \| u_t (s)\|_{L^2(\Omega)}^2\,ds \right)
\left(  \int_0^t \|u(s)\|_{L^2(\Omega)}^2\,ds + A \right)\\
- &\, 4(1+\alpha)(1+\varepsilon) \left(\int_0^t \| u(s)\|_{L^2(\Omega)}^2\,ds\right)
\left(\int_0^t \| u_t (s)\|_{L^2(\Omega)}^2\,ds\right) \\
- &\, (1+\alpha)(1+\varepsilon^{-1}) \| u_0\|_{L^2(\Omega)}^4
\end{split}
\end{equation}
for any $ t\in (0,T_m)$ and $\varepsilon>0$.
Noting that $l_L - E_L(u_0)$ is a positive constant, and
choosing $\alpha, \varepsilon$ sufficiently small and $A$ sufficiently large,
we can ensure that
\[
I''(t)I(t) - (1+\alpha)I'(t)^2 > 0
\]
for any $ t\in (0,T_m)$.
This is equivalent to
\[
\frac{d}{dt}\left(
\frac{I'(t)}{I(t)^{\alpha+1}} \right)
>0,
\]
which implies that
\[
\frac{I'(t)}{I(t)^{\alpha+1}} > \frac{I'(0)}{I(0)^{\alpha+1}} = \frac{\| u_0\|_{L^2(\Omega)}^2}{A^{\alpha+1}}=: a
\]
for any $ t\in (0,T_m)$. Integrating the above inequality gives
\[
\frac{1}{\alpha} \left(
\frac{1}{I(0)^\alpha }- \frac{1}{I(t)^\alpha}
\right)
> at.
\]
Hence
\[
I(t)^\alpha >
\frac{I(0)^\alpha}{1-I(0)^\alpha\alpha a t} \to +\infty
\]
as $t \to 1/(I(0)^\alpha\alpha a )= A/(\alpha \|u_0\|_{L^2(\Omega)}^2 ) =:\tilde{t}$ ($<+\infty$). This shows that
\begin{equation}\label{eq.blowup}
\limsup_{t\to \tilde{t}-}\|u(t)\|_{L^2(\Omega)} = +\infty.
\end{equation}
If $T_m = +\infty$, then the solution $u$ must satisfy
\[
u \in C([0,T]; L^2(\Omega))\quad \text{for any $T>0$}
\]
by Proposition \ref{thm:LW-sub}.
However this contradicts \eqref{eq.blowup}.
Thus we prove that $T_m < +\infty$.
The proof of (ii) in Theorem \ref{thm:GB-sub} is complete.
\end{proof}

Next we prove Theorem \ref{thm:GB-cri}.

\begin{proof}[Proof of {\rm (i)} in Theorem \ref{thm:GB-cri}]
Let $u_0 \in \dot H^1(L)$ such that $J_L(u_0)\ge0$ and $\|e^{-tL}u_0\|_{S(\mathbb R_+)} $ is sufficiently small. 
The first part, i.e., $T_m = + \infty$,
is proved by (vi) in Proposition~\ref{thm:LW-cri}. Hence it suffices to prove the latter part:
\begin{equation*}\label{eq.diss}
\lim_{t\to\infty} \|u(t)\|_{\dot{H}^1(L)}=0.
\end{equation*}

Let $\varepsilon>0$ be fixed.
The solution $u$ to \eqref{eq.NLH-cri} is written as
\[
\begin{split}
u(t)  & =
e^{-tL} u_0 + \int_{0}^\tau e^{-(t-s)L}|u(s)|^{\frac{4}{d-2}} u(s)\, ds
+
\int_{\tau}^t e^{-(t-s)L}|u(s)|^{\frac{4}{d-2}} u(s)\, ds\\
& =:  I(t) + I\hspace{-1pt}I(t) + I\hspace{-1pt}I\hspace{-1pt}I(t)
\end{split}
\]
for $0<\tau <t$. By density,
there exists $v_\varepsilon \in L^1(\Omega) \cap L^2(\Omega)$ such that
\[
\| L^\frac12 u_0 - v_\varepsilon\|_{L^2(\Omega)} < \frac{\varepsilon}{2}.
\]
Then it follows from (i) in Lemma \ref{lem:Str} that
\[
\begin{split}
\| I(t) \|_{\dot H^1(L)} & \le \| L^\frac12 u_0 - v_\varepsilon\|_{L^2(\Omega)} + C t^{-\frac{d}{4}}\|v_\varepsilon\|_{L^1(\Omega)}\\
& \le\frac{\varepsilon}{2} + C t^{-\frac{d}{4}}\|v_\varepsilon\|_{L^1(\Omega)}
\end{split}
\]
for any $t>0$. Hence there exists a time $t_1 = t_1(\varepsilon)>0$ such that
\begin{equation}\label{eq.first}
\| I(t) \|_{\dot H^1(L)} \le \varepsilon\quad \text{for any $t > t_1$.}
\end{equation}
As to the second term $I\hspace{-1pt}I(t)$,
we write
\[
I\hspace{-1pt}I(t) =  e^{-(t-\tau)L} \int_{0}^\tau e^{-(\tau-s)L}|u(s)|^{\frac{4}{d-2}} u(s)\, ds.
\]
Since
\[
w(\tau):= \int_{0}^\tau e^{-(\tau-s)L}|u(s)|^{\frac{4}{d-2}} u(s)\, ds \in \dot H^1(L)
\]
for $0<\tau <t$ by the same argument as in \eqref{eq.s3-1},
we can apply the same argument as in $I(t)$ to $I\hspace{-1pt}I(t)$, and hence,
there exists a time $t_2 = t_2(\varepsilon)>0$ such that
\begin{equation}\label{eq.second}
\| I\hspace{-1pt}I(t) \|_{\dot H^1(L)} \le \varepsilon\quad \text{for any $t > t_2$.}
\end{equation}
As to the third term $I\hspace{-1pt}I\hspace{-1pt}I(t)$, again applying the same argument as in \eqref{eq.s3-1}, we estimate
\[
\| I\hspace{-1pt}I\hspace{-1pt}I(t) \|_{\dot H^1(L)} \le C\|u\|_{S((\tau, t))}^{\frac{d+2}{d-2}}.
\]
Since $\|u\|_{S(\mathbb R_+)}<\infty$ by (vi) in Proposition \ref{thm:LW-cri},
there exist $\tau_0 = \tau_0(\varepsilon) >0$ such that
\begin{equation}\label{eq.third}
\| I\hspace{-1pt}I\hspace{-1pt}I(t) \|_{\dot H^1(L)} \le C\|u\|_{S((\tau, t))}^{\frac{d+2}{d-2}} < \varepsilon \quad \text{for any $t>\tau >\tau_0$}.
\end{equation}
By combining \eqref{eq.first}--\eqref{eq.third}, we conclude that
\[
\lim_{t\to\infty}\|u(t)\|_{\dot H^1(L)}=0.
\]
The proof of (i) in Theorem \ref{thm:GB-cri} is finished.
\end{proof}

\begin{proof}[Proof of {\rm (ii)} in Theorem \ref{thm:GB-cri}]
Let $u_0 \in \mathcal M_L^-$. Suppose $T_m=T_m(u_0) = +\infty$.
We take $\chi_R \in C^\infty_0(\mathbb R^n)$ such that
\[
\chi_R (x)
=
\begin{cases}
1,\quad &|x|\le R,\\
0, & |x|\ge R+1
\end{cases}
\]
for $R>0$. Then, by H\"older's inequality and  the Sobolev embedding theorem \eqref{eq.Sobolev2}, we have
\begin{equation}\label{eq.1}
\| \chi_R u(t) \|_{L^2(\Omega)} \le C_R \|u(t) \|_{L^{p^*+1}(\Omega)} \le C_R' \| u(t) \|_{\dot H^1(L)},
\end{equation}
and hence $\chi_R u (t) \in L^2(\Omega)$ for $t>0$.
We define
\[
I_R(t) := \int_0^t \| \chi_R u(s) \|_{L^2(\Omega)}^2 \,ds + A,\quad t\ge0,
\]
where $R>0$ and $A>0$, which are chosen later.
Then
\[
I_R'(t)=\| \chi_R u(t) \|_{L^2(\Omega)}^2,
\]
\begin{align*}
I_R''(t) & = 2 (\chi_R u(t), \chi_R u_t(t))_{L^{2}(\Omega)}\\
& = 2 (u(t), u_t(t))_{L^{2}(\Omega)} + 2((\chi_R^2-1)u(t), u_t(t))_{L^{2}(\Omega)}\\
& = -2 J_L(u(t)) + 2((\chi_R^2-1)u(t), u_t(t))_{L^{2}(\Omega)}. 
\end{align*}
Here we note that
\begin{equation}\label{eq.sec5-1}
| (u(t), u_t(t))_{L^{2}(\Omega)} | < \infty
\end{equation}
for almost everywhere $t>0$. Indeed, in a similar argument to \eqref{eq.s3-1}, 
we estimate
\[
\| u_t \|_{L^{\infty}((0,T);\dot H^{-1}(L))}
\le C \left(\| u_0 \|_{\dot{H}^1(L)}
+
\|u \|_{L^{\infty}((0,T);\dot{H}^1(L))}^{\frac{d+2}{d-2}}
+
\|u\|_{S((0,T))}^{\frac{d+2}{d-2}}
\right) < \infty.
\]
Hence
\begin{equation}\label{eq.finite}
\sup_{t\in(0,T)}| (u(t), u_t(t))_{L^{2}(\Omega)} | 
\le \| u \|_{L^{\infty}((0,T);\dot H^1(L))} \| u_t \|_{L^{\infty}((0,T);\dot H^{-1}(L))} <\infty,
\end{equation}
which shows \eqref{eq.sec5-1}. 
Therefore, in a similar way to proof of (ii) in Theorem \ref{thm:GB-sub}, we have
\begin{equation}\label{eq.keypoint2}
I''_R(t)I_R(t) - (1+\alpha)I'_R(t)^2 > 0
\end{equation}
for almost everywhere $t >0$, if $R$ and $A$ are sufficiently large and $\alpha$ is sufficiently small.
In fact, we estimate $I_R'(t)^2$ as
\begin{align*}
I_R'(t)^2 & = \left(\| \chi_Ru_0\|_{L^2(\Omega)}^2 + 2 \mathrm{Re}\int_0^t (\chi_Ru,\chi_Ru_t)_{L^2(\Omega)}\,ds\right)^2\\
& \le (1+\varepsilon^{-1}) \| \chi_Ru_0\|_{L^2(\Omega)}^4 + 4(1+\varepsilon)\left(\int_0^t (\chi_Ru,\chi_Ru_t)_{L^2(\Omega)}\,ds\right)^2\\
& \le (1+\varepsilon^{-1}) \| u_0\|_{L^2(\Omega)}^4 + 4(1+\varepsilon)
\left(\int_0^t \| \chi_Ru(s)\|_{L^2(\Omega)}^2\,ds\right) \left(\int_0^t \| \chi_Ru_t (s)\|_{L^2(\Omega)}^2\,ds\right)
\end{align*}
for any $\varepsilon >0$, and estimate $I_R''(t)$ from below as
\begin{align*}
I_R''(t) &\ge 2(p+1) \{ l_L - E_L(u(t))\}+ 2((\chi_R^2-1)u(t), u_t(t))_{L^{2}(\Omega)}\\
&\ge
2(p+1) \left( l_L - E_L(u_0) + \int_0^t \| u_t (s)\|_{L^2(\Omega)}^2\,ds \right)+ 2((\chi_R^2-1)u(t), u_t(t))_{L^{2}(\Omega)}\\
&\ge
2(p+1) \left( l_L - E_L(u_0) + \int_0^t \| \chi_R u_t (s)\|_{L^2(\Omega)}^2\,ds \right)+ 2((\chi_R^2-1)u(t), u_t(t))_{L^{2}(\Omega)}.
\end{align*}
Let $\alpha>0$. By summarizing the above estimates,
we have
\[
\begin{split}
&I_R''(t)I_R(t) - (1+\alpha)I_R'(t)^2 \\
\ge &\,
2(p+1) \left( l_L - E_L(u_0) + \int_0^t \| \chi_Ru_t (s)\|_{L^2(\Omega)}^2\,ds \right)
\left(  \int_0^t \|\chi_R u(s)\|_{L^2(\Omega)}^2\,ds + A \right)\\
- &\, 4(1+\alpha)(1+\varepsilon) \left(\int_0^t \| \chi_R u(s)\|_{L^2(\Omega)}^2\,ds\right)
\left(\int_0^t \| \chi_R u_t (s)\|_{L^2(\Omega)}^2\,ds\right) \\
- &\, (1+\alpha)(1+\varepsilon^{-1}) \| u_0\|_{L^2(\Omega)}^4
- 2|((\chi_R^2-1)u(t), u_t(t))_{L^{2}(\Omega)}|
\end{split}
\]
for almost everywhere $t>0$.
Here it is ensured by \eqref{eq.finite} that
\[
|((\chi_R^2-1)u(t), u_t(t))_{L^{2}(\Omega)}| \to 0 \quad \text{as }R\to + \infty.
\]
Hence, choosing $R$, $A$ sufficiently large and $\varepsilon$, $\alpha$ sufficiently small,
we obtain \eqref{eq.keypoint2}
by the same argument as in \eqref{eq.keypoint}.
Therefore, by the same argument as in \eqref{eq.blowup}, we have
\[
\limsup_{t\to \tilde{t}-} \|\chi_R u(t)\|_{L^2(\Omega)} = +\infty,
\]
where $\tilde{t} := A/(\alpha \|\chi_R u_0\|_{L^2(\Omega)}^2 ) < \infty$.
Hence we find from \eqref{eq.1} that
\begin{equation*}\label{eq.blowup2}
\limsup_{t\to \tilde{t}-} \| u(t)\|_{\dot H^1(L)} = +\infty.
\end{equation*}
However this contradicts $T_m=T_m(u_0) = +\infty$, i.e.,
\[
u \in C([0,T]; \dot H^1(L))\quad \text{for any $T>0$}
\]
by Proposition \ref{thm:LW-cri}.
Thus we conclude that $T_m < + \infty$.
The proof of (ii) in Theorem~\ref{thm:GB-cri} is complete.
\end{proof}

\appendix
\section{Definitions of Sobolev spaces associated with $L$}
\label{App:A}

In this appendix we show well-definedness and completeness of homogeneous Sobolev spaces $\dot H^s(L)$. 
Although this is mentioned in \cite{IMT-Besov}, 
we give more details here.

The definition is based on \cite{IMT-Besov},
in which the theory of homogeneous Besov spaces $\dot B^s_{p,q}(-\Delta_D)$ associated with the Dirichlet Laplacian $-\Delta_D$
is established on an arbitrary open set of $\mathbb R^d$. 
The key points to define $\dot B^s_{p,q}(-\Delta_D)$ are the following two facts:
\begin{itemize}
\item[(i)] $L^p$-boundedness of spectral multiplier operators $\phi(-\theta \Delta_D)$ for $1\le p\le \infty$:
\begin{equation}\label{eq.spec-Lp}
\sup_{\theta >0} \|\phi (-\theta \Delta_D)\|_{L^p(\Omega) \to L^p(\Omega)} < \infty,
\end{equation}
provided $\phi \in C^\infty_0((0,\infty))$;
\item[(ii)] zero is not an eigenvalue of $-\Delta_D$.
\end{itemize}
If $L$ is a non-negative and self-adjoint operator on $L^2(\Omega)$ satisfying (i) and (ii),
then we can apply 
the argument of \cite{IMT-Besov} to $L$,
and hence, we can define the homogeneous Besov spaces $\dot B^s_{p,q}(L)$ associated with $L$
whose norms are given by
\[
\| f \|_{\dot B^s_{p,q}(L)} :=
\left\{ \sum_{j=-\infty}^\infty
\left(2^{sj} \|\phi_j(\sqrt{L}) f\|_{L^p(\Omega)}\right)^q
\right\}^{\frac{1}{q}}
\]
for $s\in \mathbb R$ and $1\le p,q\le \infty$,
where $\{\phi_j\}_j$ is the Littlewood-Paley dyadic decomposition.
Then the homogeneous Sobolev space $\dot H^s(L)$ is defined by
\begin{equation}\label{eq.homo-S}
\dot H^s(L) := \dot B^s_{2,2}(L),\quad s\in \mathbb R.
\end{equation}
It is proved that $\dot H^s(L)$ is complete (see Theorem 2.5 in \cite{IMT-Besov}), and
the definition \eqref{eq.homo-S} is equivalent to (ii) in Definition \ref{def:Sobolev}.
Therefore, in order to define $\dot H^s(L)$, it is sufficient to show that $L$ satisfies (i) and (ii) under Assumption~B. \\

As to (i),
the spectral multiplier theorem is already established for non-negative self-adjoint operators
with Gaussian upper bound \eqref{eq.Gauss} (see Duong, Ouhabaz and Sikora \cite{DOS-2002} and also Bui, D'Ancona and Nicola \cite{BDN-arxiv}).
From this theorem, we obtain $L^p$-boundedness \eqref{eq.spec-Lp} for $L$ under Assumption B.\\

As to (ii), we have the following:

\begin{prop}\label{prop:ev}
Let $L$ be a self-adjoint operator on $L^2(\Omega)$ satisfying
\begin{equation}\label{eq.decay}
\| e^{-tL} f \|_{L^\infty(\Omega)} \to 0\quad \text{as }t\to\infty
\end{equation}
for any $f\in L^2(\Omega)$.
Then zero is not an eigenvalue of $L$.
\end{prop}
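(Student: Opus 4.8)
The plan is to argue by contradiction, exploiting the elementary fact that an eigenfunction of a self-adjoint operator at the eigenvalue zero is a fixed point of the associated semigroup, and then letting the decay hypothesis \eqref{eq.decay} force that fixed point to vanish.

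First I would suppose, contrary to the assertion, that zero is an eigenvalue of $L$, and fix a corresponding eigenfunction $f\in L^2(\Omega)$ with $f\neq 0$ and $Lf=0$. Since $L$ is self-adjoint, the functional calculus shows that any element of the eigenspace $\ker L=\mathrm{Ran}\,E_{\{0\}}$, with $E$ the spectral measure of $L$, is left invariant by the semigroup:
\[
e^{-tL}f = e^{-t\cdot 0}f = f \quad\text{for every } t>0.
\]

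I would then pass to $L^\infty$ norms. Because $e^{-tL}f=f$ is independent of $t$, the quantity $\|e^{-tL}f\|_{L^\infty(\Omega)}$ equals the constant $\|f\|_{L^\infty(\Omega)}$; applying \eqref{eq.decay} and letting $t\to\infty$ forces this constant to be zero, so $f=0$ almost everywhere in $\Omega$, contradicting $f\neq 0$ in $L^2(\Omega)$. This completes the argument.

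The only point requiring a word of care is the finiteness of $\|f\|_{L^\infty(\Omega)}$: the norm in \eqref{eq.decay} is meaningful only once $e^{-tL}f\in L^\infty(\Omega)$, and the identity $e^{-tL}f=f$ already places the eigenfunction in $L^\infty(\Omega)$ for this purpose; alternatively, under Assumption~B the $L^2$-$L^\infty$ bound (i) in Lemma~\ref{lem:Str}, combined with $f=e^{-tL}f$, puts $f\in L^\infty(\Omega)$ directly. No genuine difficulty arises beyond this bookkeeping, so I do not expect a substantive obstacle in carrying out the proof.
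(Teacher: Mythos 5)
Your proof is correct and follows the same strategy as the paper's: assume zero is an eigenvalue, observe that the eigenfunction is invariant under the semigroup, and let the decay hypothesis \eqref{eq.decay} force it to vanish. The only difference is in execution: the paper establishes constancy of $e^{-tL}f_0$ in $t$ by differentiating ($\partial_t e^{-tL}f_0 = -e^{-tL}Lf_0 = 0$) and then, having concluded $e^{-tL}f_0 = 0$ for $t>0$, recovers $f_0=0$ through the resolvent identity $(I+L)^{-1}f_0 = \int_0^\infty e^{-t}e^{-tL}f_0\,dt = 0$ together with injectivity of $(I+L)^{-1}$, whereas your appeal to the spectral theorem yields the exact fixed-point identity $e^{-tL}f = f$ at once, which renders that final resolvent step unnecessary.
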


\begin{proof}
Suppose that zero is an eigenvalue of $L$, i.e., there exists a function $f_0 \in \mathcal D(L)\setminus \{0\}$
such that $Lf_0 = 0$. Then
\[
\partial_t e^{-tL} f_0 = L e^{-tL} f_0  = e^{-tL} L f_0 = 0
\]
for any $t>0$, which implies that $e^{-tL} f_0$ is a constant in $t>0$.
Taking account of \eqref{eq.decay}, we have
$
e^{-tL} f_0 = 0
$,
and hence,
\begin{equation}\label{eq.A1}
(I+L)^{-1} f_0 = \int_0^\infty
e^{-t}e^{-tL}f_0 \, dt = 0
\end{equation}
almost everywhere in $\Omega$.
Since $-1$ belongs to the resolvent set of $L$, the operator $(I+L)^{-1}$ is injective from
$\mathcal D(L)$ to $L^2(\Omega)$. Therefore we deduce from \eqref{eq.A1} that $f_0=0$.
However this contradicts the fact that zero is an eigenvalue of $L$.
Thus we conclude that zero is not an eigenvalue of $L$.
\end{proof}

Hence it follows from Proposition \ref{prop:ev} that zero is not an eigenvalue of $L$ under Assumption B.
Thus $\dot H^s(L)$ is well-defined and complete.


\section{Proof of Proposition \ref{prop:Sobolev}}\label{App:B}

In this appendix we prove Proposition \ref{prop:Sobolev}.
We first prove the assertion (i). By using the formula
\[
(I+L)^{-\frac12}
=
\frac{1}{2\sqrt{\pi}}
\int_0^\infty t^{-\frac12} e^{- t}e^{-tL}\,dt,
\]
we deduce from \eqref{eq.L2Lq} that
\[
\begin{split}
\| (I+L)^{-\frac12} g\|_{L^{p+1}(\Omega)}
& \le C \int_0^\infty  t^{-\frac12} e^{- t}
\|e^{-tL} g\|_{L^{p+1}(\Omega)}\,dt\\
& \le
C \|g\|_{L^2(\Omega)} \int_0^\infty t^{-\frac12-\frac{d}{2}(\frac12 - \frac{1}{p+1})} e^{- (1-\omega)t}\,dt\\
& \le C \|g\|_{L^2(\Omega)}
\end{split}
\]
for any $g\in L^2(\Omega)$, since $2 < p+1 < 2d/(d-2)$.
Hence we conclude that
\[
\| f \|_{L^{p+1}(\Omega)}
\le
C \|(I+L)^{\frac12} f \|_{L^2(\Omega)}
\le C \|f\|_{H^1(L)}
\]
for any $f \in H^1(L)$, which concludes the assertion (i).

As to the assertion (ii), under the assumption \eqref{eq.Gauss}, 
the same arguments as Propositions~3.2 and 3.3 in \cite{IMT-Besov} allow us to obtain the following embedding relations:
\[
\dot H^{1}(L)=\dot B^1_{2,2}(L) \hookrightarrow\dot B^0_{\frac{2d}{d-2},2}(L)
\hookrightarrow L^{\frac{2d}{d-2}}(\Omega).
\]
Therefore we conclude the assertion (ii) from the above embeddings. 
The proof of Proposition \ref{prop:Sobolev} is complete.


\section{Proof of \eqref{eq.l_L2}}\label{App:C}
In this appendix we show the characterization \eqref{eq.l_L2} of $l_{L}$ by the best constants $S_{p+1}$ of the Sobolev inequalities. 
First we show that
\begin{equation}\label{eq.less}
\frac{p-1}{2(p+1)} S_{p+1}^{-\frac{2(p+1)}{p-1}} \le l_L.
\end{equation}
Let $f \in \mathcal N_L$. Then
we write the Pohozaev identity with $L$:
\[
E_L(f) =
\frac{p-1}{2(p+1)}\|f\|_{\mathcal E(L)}^2.
\]
By the Sobolev inequality \eqref{eq.Sobolev1} and \eqref{eq.Sobolev2}, we estimate
\[
\|f\|_{\mathcal E(L)}^2 = \| f\|_{L^{p+1}(\Omega)}^{p+1}
\le (S_{p+1} \|f\|_{\mathcal E(L)})^{p+1},
\]
which implies that
\[
S_{p+1}^{-\frac{2(p+1)}{p-1}}
\le
\|f\|_{\mathcal E(L)}^2.
\]
Hence, combining the estimates obtained now, we have
\[
\frac{p-1}{2(p+1)}S_{p+1}^{-\frac{2(p+1)}{p-1}}
\le E_L(f),
\]
which shows \eqref{eq.less} by taking the infimum of the right hand side over $f \in \mathcal N_L$.

Next we show \eqref{eq.l_L2}. Since
$
S_{p+1}^{-1} = \inf{\left\{\|f\|_{\mathcal E(L)} : f \in \mathcal E(L),\ \|f\|_{L^{p+1}(\Omega)}=1 \right\}}
$,
it follows that
there exists $\{f_\varepsilon\}_{\varepsilon>0} \subset \mathcal E(L)$
such that
\begin{equation}\label{eq.inf_A}
\|f_\varepsilon\|_{\mathcal E(L)} < S_{p+1}^{-1} + \varepsilon
\quad \text{and}\quad
\|f_\varepsilon\|_{L^{p+1}(\Omega)}=1.
\end{equation}
Furthermore, there exists $\lambda_\varepsilon >0$ such that
$\lambda_\varepsilon f_\varepsilon \in \mathcal N_L$, and we put $g_\varepsilon := \lambda_\varepsilon f_\varepsilon$ for $\varepsilon>0$.
Then, noting the equality in \eqref{eq.inf_A} and
$g_\varepsilon \in \mathcal N_L$, we write
\[
\|f_\varepsilon\|_{\mathcal E(L)}
= \frac{\|g_\varepsilon\|_{\mathcal E(L)}}{\|g_\varepsilon\|_{L^{p+1}(\Omega)}}
= \|g_\varepsilon\|_{\mathcal E(L)}^{\frac{p-1}{p+1}}
=
\bigg\{\frac{2(p+1)}{p-1} E_L(g_\varepsilon)\bigg\}^{\frac{p-1}{2(p+1)}}
\]
Hence, combining the inequality in \eqref{eq.inf_A} and the above equality,
we have
\begin{equation}\label{eq.222}
E_L(g_\varepsilon)
<
\frac{p-1}{2(p+1)} (S_{p+1}^{-1}+ \varepsilon)^{\frac{2(p+1)}{p-1}}
\end{equation}
for any $\varepsilon>0$. Suppose that the right hand side of \eqref{eq.less} is strictly less than $l_L$.
Then
it follows from \eqref{eq.222} that
\[
E_L(g_\varepsilon)
<
l_L
\]
for sufficiently small $\varepsilon>0$.
This contradicts the definition of $l_L$, since $g_\varepsilon \in \mathcal N_L$.
Thus we conclude \eqref{eq.l_L2}.


\section{Proof of Proposition \ref{thm:LW-cri}}\label{App:D}
The proof of Proposition \ref{thm:LW-cri} is based on the fixed point argument 
(see \cite{CW-1990} and also \cite{KM-2006}). 
In this appendix, let us give the proof for completeness.\\ 

First we prove the assertion (i), i.e., 
the existence of solutions in the sense of Definition~\ref{def:2}.
For this purpose, we show the following lemma.

\begin{lem}[Theorem 2.5 in \cite{KM-2006}]\label{lem:local}
Let $A>0$ and $u_{0} \in \dot H^{1}(L)$ with $\|u_{0}\|_{\dot H^{1}(L)}\le A$. 
Then there exists a constant $\delta=\delta(A)>0$ such that 
if $\|e^{-tL}u_{0}\|_{S(I)}\le \delta$, then there exists 
a unique solution $u\in C(I;\dot H^{1}(L))$ to \eqref{eq.NLH-cri} with 
$u(0)=u_{0}$,
\begin{equation}\label{eq.D1}
\| u_t \|_{L^2(I\times\Omega)} < \infty
\quad \text{and}\quad 
\| u \|_{S(I)}\le 2\delta.
\end{equation}
\end{lem}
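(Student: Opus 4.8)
The plan is to realize $u$ as a fixed point of the Duhamel map
\[
\Phi(u)(t) := e^{-tL}u_0 + \int_0^t e^{-(t-s)L}\,|u(s)|^{\frac{4}{d-2}}u(s)\,ds
\]
on the complete metric space $X := \{u \in S(I) : \|u\|_{S(I)} \le 2\delta\}$ with distance $\|u-v\|_{S(I)}$. The single algebraic fact I would record first is the identity $\big\||w|^{\frac{4}{d-2}}w\big\|_{L^2(I\times\Omega)} = \|w\|_{S(I)}^{\frac{d+2}{d-2}}$, which is immediate from the definition \eqref{eq.St-norm} of the $S$-norm. Choosing the exponents $\gamma_1=q_1=2$ and $\gamma_2=q_2=\frac{2(d+2)}{d-2}$, one verifies that the admissibility relations of Lemma \ref{lem:Str}(iv) hold, so that \eqref{eq.Str3} applied to $F=|u|^{\frac{4}{d-2}}u$ yields the crucial bound
\[
\Big\| \int_0^t e^{-(t-s)L}|u|^{\frac{4}{d-2}}u\,ds \Big\|_{S(I)} \le C\,\|u\|_{S(I)}^{\frac{d+2}{d-2}}.
\]

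Next I would check self-mapping and contraction. Combining the hypothesis $\|e^{-tL}u_0\|_{S(I)}\le\delta$ with the estimate above gives $\|\Phi(u)\|_{S(I)} \le \delta + C(2\delta)^{\frac{d+2}{d-2}}$, which is at most $2\delta$ once $\delta$ is small enough that $C\,2^{\frac{d+2}{d-2}}\delta^{\frac{4}{d-2}}\le 1$. For the contraction, the pointwise bound $\big||a|^{\frac{4}{d-2}}a - |b|^{\frac{4}{d-2}}b\big| \le C(|a|^{\frac{4}{d-2}} + |b|^{\frac{4}{d-2}})|a-b|$, together with Hölder's inequality in $S(I)$ (splitting $L^2(I\times\Omega)$ along the exponents $\tfrac{d+2}{2}$ and $\tfrac{2(d+2)}{d-2}$) and \eqref{eq.Str3}, gives $\|\Phi(u)-\Phi(v)\|_{S(I)} \le C(2\delta)^{\frac{4}{d-2}}\|u-v\|_{S(I)}$, so a further shrinking of $\delta$ (depending on $A$ only through the ambient constants) makes $\Phi$ a contraction on $X$. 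Banach's fixed point theorem then produces a unique $u\in X$ solving \eqref{eq.Duhamel2} with $\|u\|_{S(I)}\le 2\delta$.

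It then remains to upgrade this $S(I)$-fixed point to a genuine solution in the sense of Definition \ref{def:2}. The $\dot H^1(L)$-regularity comes from Lemma \ref{lem:Str}(v): since $\|e^{-tL}u_0\|_{L^\infty(I;\dot H^1(L))}=\|u_0\|_{\dot H^1(L)}\le A$ and the Duhamel term is controlled via \eqref{eq.Str4} by $C\|u\|_{S(I)}^{\frac{d+2}{d-2}}$, one gets $\|u\|_{L^\infty(I;\dot H^1(L))}<\infty$, and continuity in $\dot H^1(L)$ follows by approximating $F=|u|^{\frac{4}{d-2}}u$ in $L^2(I\times\Omega)$ and using strong continuity of $\{e^{-tL}\}$. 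For $u_t\in L^2(I\times\Omega)$ (and, as Definition \ref{def:2} also requires, $Lu\in L^2(I\times\Omega)$), I would invoke $L^2$-maximal regularity for the nonnegative self-adjoint operator $L$: writing $u_t=-Lu+F$, the spectral theorem gives $\|Le^{-tL}u_0\|_{L^2(\mathbb R_+;L^2)}^2=\tfrac12\|u_0\|_{\dot H^1(L)}^2$ and $\big\|L\int_0^t e^{-(t-s)L}F\,ds\big\|_{L^2(I;L^2)}\le C\|F\|_{L^2(I;L^2)}=C\|u\|_{S(I)}^{\frac{d+2}{d-2}}$, whence $\|u_t\|_{L^2(I\times\Omega)}\le C\big(\|u_0\|_{\dot H^1(L)}+\|u\|_{S(I)}^{\frac{d+2}{d-2}}\big)<\infty$.

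The main obstacle is this last upgrade rather than the contraction itself: the fixed point arises naturally only in the purely space-time class $S(I)$, and meeting Definition \ref{def:2} forces a separate $L^2$-in-time control of $u_t$ and $Lu$. The clean route is the Hilbert-space maximal regularity afforded by self-adjointness (equivalently the spectral computation above, in the same spirit as \eqref{eq.Str2}); the care required is to justify differentiating the Duhamel integral and to confirm that the extra regularity of $\Phi(u)$ is inherited by the fixed point, where the identity $\|F\|_{L^2(I\times\Omega)}=\|u\|_{S(I)}^{\frac{d+2}{d-2}}$ is once more decisive.
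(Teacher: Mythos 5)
Your proposal is correct, but it reaches the fixed point by a genuinely different route than the paper. The paper runs the contraction on the space $X_{a,b}=\{u:\|u\|_{S(I)}\le a,\ \|u_t\|_{L^2(I\times\Omega)}\le b\}$, metrized only by $\|u-v\|_{S(I)}$: this forces a nontrivial completeness argument (Fatou's lemma plus Banach--Alaoglu, identifying the weak* limit of $\partial_t u_n$ with the distributional derivative of the limit), and a direct bound on $\partial_t\Phi_{u_0}[u]$ obtained by the change of variables $s\mapsto t-s'$ and differentiation under the Duhamel integral, which produces the boundary term $e^{-tL}|u_0|^{\frac{4}{d-2}}u_0$ and the term $\int_0^t e^{-s'L}\partial_t\bigl(|u(t-s')|^{\frac{4}{d-2}}u(t-s')\bigr)\,ds'$, estimated via \eqref{eq.Str2}, via \eqref{eq.Str1} combined with the Sobolev inequality, and via \eqref{eq.Str3}, respectively; in this way the fixed point automatically carries the bound $\|u_t\|_{L^2(I\times\Omega)}\le b$, and every tool stays inside Lemma \ref{lem:Str}. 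You instead contract on the bare ball $\{u:\|u\|_{S(I)}\le 2\delta\}$, which is trivially complete as a closed ball of $L^{\frac{2(d+2)}{d-2}}(I\times\Omega)$, and recover $u_t,Lu\in L^2(I\times\Omega)$ a posteriori from $F=|u|^{\frac{4}{d-2}}u\in L^2(I\times\Omega)$ (your identity $\|F\|_{L^2(I\times\Omega)}=\|u\|_{S(I)}^{\frac{d+2}{d-2}}$, which the paper also exploits) by invoking $L^2$-maximal regularity for the nonnegative self-adjoint operator $L$. Your exponent choices ($\gamma_1=q_1=2$, $\gamma_2=q_2=\tfrac{2(d+2)}{d-2}$ for \eqref{eq.Str3}; H\"older with $\tfrac{d+2}{2}$ and $\tfrac{2(d+2)}{d-2}$ in the contraction) are exactly the admissible ones, and your spectral computation for $\|Le^{-tL}u_0\|_{L^2(\mathbb R_+;L^2(\Omega))}$ is in substance \eqref{eq.Str2} applied to $L^{1/2}u_0$; the genuine addition to the paper's toolkit is the bound $\bigl\|L\int_0^t e^{-(t-s)L}F\,ds\bigr\|_{L^2(I\times\Omega)}\le C\|F\|_{L^2(I\times\Omega)}$ (de Simon's theorem, provable here by the spectral theorem and Plancherel in time). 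What each approach buys: yours avoids the completeness argument and never differentiates the nonlinearity under the integral sign, at the price of importing maximal regularity --- classical for self-adjoint semigroups on Hilbert space, but outside Lemma \ref{lem:Str} --- and of the (standard, and rightly flagged by you) verification that the mild solution is strong, so that $u_t=-Lu+F$ holds almost everywhere; the paper's proof is self-contained relative to its listed space-time estimates but carries the heavier fixed-point space.
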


\begin{proof}
Let $T>0$ and we assume that $I=[0,T)$ without loss of generality. 
Define the map $\Phi_{u_0}$ by
\begin{equation}\label{eq.Phi}
\Phi_{u_0}[u](t):=
e^{-tL}u_0 +
\int_0^t e^{-(t-s)L} |u(s)|^{\frac{4}{d-2}}u(s)\,ds.
\end{equation}
Given $a,b>0$ to be chosen later, we define 
\[
X_{a,b} := \Big\{
u \text{ on } I\times\Omega 
:
\| u \|_{S(I)}
\le a,\ 
\| u_t \|_{L^2(I\times\Omega)} 
\le b
\Big\},
\]
equipped with the distance $\mathrm{d}:X_{a,b}\times X_{a,b}\to\mathbb R$ given by 
\begin{equation}\label{eq.distance}
\mathrm{d}(u,v)
:=\| u-v \|_{S(I)}.
\end{equation}
Then $(X_{a,b},\mathrm{d})$ is a complete metric space. 
Indeed, let $\{u_{n}\}_{n\in\mathbb N}$ be a Cauchy sequence in $(X_{a,b}, \mathrm{d})$. 
Then $\{u_{n}\}_{n\in\mathbb N}$ is also a Cauchy sequence in $S(I)$ and its limit $u \in S(I)$ satisfies 
\[
\|u\|_{S(I)} \le \liminf_{n\to\infty} \|u_{n}\|_{S(I)} \le a
\]
by Fatou's lemma. Moreover, since $\{\partial_{t}u_{n}\}_{n}$ is a bounded sequence in $L^{2}(I\times\Omega)$ with $\|\partial_{t}u_{n}\|_{L^{2}(I\times\Omega)}\le b$ for any $n\in\mathbb N$, it follows from the Banach-Alaoglu theorem that there exist a subsequence $\{u_{n(k)}\}_{k}$ and a function $v \in L^{2}(I\times\Omega)$ such that 
\[
u_{n(k)} \to v \quad \text{weakly* in }L^{2}(I\times\Omega)
\]
as $k\to\infty$, and again applying Fatou's lemma, we have
\[
\|v\|_{L^{2}(I\times\Omega)}\le \liminf_{k\to\infty} \|u_{n(k)}\|_{L^{2}(I\times\Omega)}
\le b.
\]
Finally, since $u_{n}\to u$ in $\mathscr D'(I\times\Omega)$ as $n\to\infty$, where 
$\mathscr D'(I\times\Omega)$ is the space of distributions on $I\times\Omega$, 
we have $\partial_{t}u_{n}\to \partial_{t}u$ in $\mathscr D'(I\times\Omega)$ as $n\to\infty$, i.e.,
\[
\langle \partial_{t}u_{n} - \partial_{t}u, \phi\rangle 
= 
\langle u_{n} - u, \partial_{t}\phi\rangle \to 0
\]
as $n\to\infty$ for any $\phi \in C^{\infty}_{0}(I\times\Omega)$. 
This implies that $v=\partial_{t}u$ by the uniqueness of limit in $\mathscr D'(I\times\Omega)$. Thus $(X_{a,b},\mathrm{d})$ is complete.

Let $u\in X_{a,b}$. 
Then, 
by using the space-time estimates in Lemma \ref{lem:Str} and the assumption on $u_{0}$, we obtain
\begin{equation}\label{eq.s3-1}
\| \Phi_{u_0}[u] \|_{S(I)} 
\le \|e^{-tL}u_{0}\|_{S(I)} 
+ C\|u\|_{S(I)}^{\frac{d+2}{d-2}}
\le 
\delta + Ca^{\frac{d+2}{d-2}}.
\end{equation}
Choosing $\delta = a/2$ and $Ca^{\frac{4}{d-2}}\le 1/2$, we have 
$\| \Phi_{u_0}[u] \|_{S(I)} \le a$. 
Next, writing $\Phi_{u_0}[u](t)$ as 
\[
\Phi_{u_0}[u](t) = e^{-tL}u_0 + \int_0^t e^{-s'L} |u(t-s')|^{\frac{4}{d-2}}u(t-s') \,ds'
\]
by the change $s \mapsto t-s'$, 
we estimate 
\begin{multline*}
\|\partial_{t} \Phi_{u_0}[u]\|_{L^2(I\times\Omega)}
\le 
\| e^{-tL} L^\frac12 u_0 \|_{L^2(I; \dot{H}^1(L))}
+ 
\|e^{-tL} |u_0|^{\frac{4}{d-2}}u_0 \|_{L^2(I\times\Omega)}\\
+
\left\|
\int_0^t e^{-s'L} \partial_t( |u(t-s')|^{\frac{4}{d-2}}u(t-s')) \,ds'
\right\|_{L^2(I\times\Omega)}.
\end{multline*}
As to the first term, we use 
the space-time estimate \eqref{eq.Str2} to get
\[
\| e^{-tL} L^\frac12 u_0 \|_{L^2(I; \dot{H}^1(L))}
\le C \|u_0 \|_{\dot{H}^1(L)}.
\]
As to the second term, it follows from the space-time estimate \eqref{eq.Str1} and 
the Sobolev inequality \eqref{eq.Sobolev2} that 
\[
\|e^{-tL} |u_0|^{\frac{4}{d-2}}u_0 \|_{L^2(I\times\Omega)}
\le 
C \||u_0|^{\frac{4}{d-2}}u_0 \|_{L^\frac{2d}{d+2}(\Omega)}
=
C \|u_0 \|_{L^\frac{2d}{d-2}(\Omega)}^{\frac{d+2}{d-2}}
\le 
C \|u_0 \|_{\dot{H}^1(L)}^{\frac{d+2}{d-2}}.
\]
As to the third term, by using the space-time estimate \eqref{eq.Str3}, we have
\[
\begin{split}
\left\|
\int_0^t e^{-s'L} \partial_t( |u(t-s')|^{\frac{4}{d-2}}u(t-s')) \,ds'
\right\|_{L^2(I\times\Omega)}
& \le C \| \partial_t( |u|^{\frac{4}{d-2}}u) \|_{L^\frac{2(d+2)}{d+4}(I\times\Omega)}\\
& \le C \| u \|_{S(I)}^{\frac{4}{d-2}} \|u_t\|_{L^2(I\times\Omega)}.
\end{split}
\]
By combining the above four estimates, we obtain 
\begin{equation}\label{eq.s3-2}
\begin{split}
\|\partial_{t} \Phi_{u_0}[u]\|_{L^2(I\times\Omega)}
& \le 
C 
\left(
\|u_0 \|_{\dot{H}^1(L)}
+
\|u_0 \|_{\dot{H}^1(L)}^{\frac{d+2}{d-2}}
+
\| u \|_{S(I)}^{\frac{4}{d-2}} \|u_t\|_{L^2(I\times\Omega)}
\right)\\
& \le 
C(A + A^{\frac{d+2}{d-2}}) + Ca^{\frac{4}{d-2}}b.
\end{split}
\end{equation}
Choose $b=2C(A + A^{\frac{d+2}{d-2}})$. Then 
$\|\partial_{t} \Phi_{u_0}[u]\|_{L^2(I\times\Omega)}\le b$.
Similarly, we also have
\[
\begin{split}
\mathrm{d}(\Phi_{u_0}[u],\Phi_{u_0}[v])
&\le C 
\max\left\{\|u\|_{S(I)}^{\frac{4}{d-2}}, \|v\|_{S(I)}^{\frac{4}{d-2}}\right\}
\mathrm{d}(u,v)\\
& \le C a^{\frac{4}{d-2}}\mathrm{d}(u,v)\\
& \le \frac12 \mathrm{d}(u,v)
\end{split}
\]
for $u,v \in X_{a,b}$, since $C a^{\frac{4}{d-2}}\le 1/2$. 
Therefore $\Phi_{u_0}$ is a contraction mapping from $X_{a,b}$ into itself. 
By Banach's fixed point theorem, we find $u \in X_{a,b}$ solving 
$\Phi_{u_0}[u]=u$. 
Moreover we see that $u \in C(I; \dot H^{1}(L))$, 
since 
\[
e^{-tL}u_{0}\in C(I; \dot H^{1}(L))
\quad \text{and}\quad 
\int_{0}^{\infty}e^{-(t-s)L}|u(s)|^{\frac{4}{d-2}}u(s)\, ds \in C(I; \dot H^{1}(L))
\]
by \eqref{eq.Str1} 
and \eqref{eq.Str4}, respectively. 
Thus we conclude Lemma \ref{lem:local}.
\end{proof}

\begin{proof}[Proof of {\rm (i)} in Proposition \ref{thm:LW-cri}]
We note from \eqref{eq.Str1} that 
\[
\|e^{-tL}u_{0}\|_{S(\mathbb R_{+})}\le C \|u_{0}\|_{\dot H^{1}(L)}<\infty.
\]
By Lemma \ref{lem:local}, 
for any $u_{0} \in \dot H^{1}(L)$, there exists an interval $I$ such that the hypotheses on $I$ in Lemma \ref{lem:local} is satisfied. 
Moreover, it follows from a similar argument to \eqref{eq.s3-2} that 
the solution $u$ obtained in Lemma \ref{lem:local} 
satisfies $Lu \in L^2(I\times\Omega)$, since
\[
\|Lu\|_{L^2(I\times\Omega)}
\le 
C 
\left(
\|u_0 \|_{\dot{H}^1(L)}
+
\|u_0 \|_{\dot{H}^1(L)}^{\frac{d+2}{d-2}}
+ 
\|u\|_{S(I)}^{\frac{d+2}{d-2}}
+
\| u \|_{S(I)}^{\frac{4}{d-2}} \|u_t\|_{L^2(I\times\Omega)}
\right).
\]
Thus we conclude the assertion (i) in Proposition \ref{thm:LW-cri}.
\end{proof}

\begin{proof}[Proof of {\rm (ii)} in Proposition \ref{thm:LW-cri}]
Let $I\subset \mathbb R_{+}$ be an interval, and let 
$u_1, u_2 \in S(I)$ 
satisfy the equation \eqref{eq.Duhamel2} with $u_1(0)=u_2(0)=u_0$. 
Then 
\[
\| u_{1} -u_{2} \|_{S(\tilde{I})}
\le C
\max\left\{\|u _{1}\|_{S(\tilde{I})}^{\frac{4}{d-2}}, \|u_{2}\|_{S(\tilde{I})}^{\frac{4}{d-2}}\right\}
\| u_{1} -u_{2} \|_{S(\tilde{I})}
\]
for any $\tilde{I}\subset I$ in the same way as \eqref{eq.s3-1}.
Choosing $\tilde{I}\subset I$ such that 
\[
 C
\max\left\{\|u _{1}\|_{S(\tilde{I})}^{\frac{4}{d-2}}, \|u_{2}\|_{S(\tilde{I})}^{\frac{4}{d-2}}\right\}
\le \frac12,
\]
we have $\| u_{1} -u_{2} \|_{S(\tilde{I})} \le 1/2 \| u_{1} -u_{2} \|_{S(\tilde{I})}$, which 
implies that $u_{1} \equiv u_{2}$ on $\tilde{I}$. 
By iterating this, we conclude that $u_{1} \equiv u_{2}$ on $I$. 
\end{proof}

To show the assertion (iii), we prepare the following:

\begin{lem}[Perturbation result]\label{prop:perturbation}
Let $T>0$ and
$v \in C([0,T); \dot H^1(L)) \cap S((0,T))$ be a solution to the equation
\[
\partial_t v + Lv = |v|^{\frac{4}{d-2}}v + e\quad \text{in }(0,T) \times \Omega
\]
with initial data $v(0) = v_0 \in \dot H^1(L)$, where $e=e(t,x)$ is a function on $(0,T) \times \Omega$.
Let $M>0$. Assume that  $v$ satisfies
\[
\| v \|_{L^\infty([0,T); \dot H^1(L))} +\| v \|_{S([0,T))} \le M.
\]
Then
there exist constants $\delta= \delta(M)>0$ and $C=C(M,\delta)>0$ such that
the following assertion holds{\rm :}
If $e \in L^2([0,T)\times \Omega)$ and $u_0 \in \dot H^1(L)$ satisfy
\[
\| e\|_{L^2([0,T)\times \Omega)} \le \delta,
\quad
\| u_0 - v_0\|_{\dot H^1(L)} \le \delta,
\]
then there exists a unique strong solution $u$ to \eqref{eq.NLH-cri} on $(0,T)\times \Omega$ with $u(0)=u_0$ such that
\[
\|u-v\|_{L^\infty([0,T); \dot H^1(L))} + \|u-v\|_{S((0,T))} \le C.
\]
\end{lem}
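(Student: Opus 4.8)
The plan is to solve for the difference $w := u - v$ by a fixed-point argument carried out on finitely many short time intervals. Writing $N(z) := |z|^{\frac{4}{d-2}}z$ and subtracting the equation for $v$ from \eqref{eq.NLH-cri}, the function $w$ should satisfy
\[
\partial_t w + Lw = N(v+w) - N(v) - e, \qquad w(0) = u_0 - v_0,
\]
equivalently the Duhamel formula
\[
w(t) = e^{-tL}(u_0 - v_0) + \int_0^t e^{-(t-s)L}\bigl[N(v+w)-N(v)-e\bigr]\,ds.
\]
Producing the desired $u = v + w$ thus amounts to finding $w$ in $C([0,T);\dot H^1(L)) \cap S((0,T))$, and the uniqueness of the resulting $u$ will follow from (ii) in Proposition \ref{thm:LW-cri}.

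The difficulty is that $M$ may be large, so $\|v\|_{S([0,T))}$ need not be small and a single contraction on all of $[0,T)$ is unavailable. Since $\|v\|_{S([0,T))} \le M < \infty$, I would first partition $[0,T)$ into consecutive subintervals $I_0,\dots,I_{J-1}$ with $I_j = [t_j,t_{j+1})$, chosen so that $\|v\|_{S(I_j)} \le \eta$ for a small $\eta = \eta(M) > 0$ to be fixed. Because the $S$-norm is absolutely continuous as a function of the interval, such a partition exists with a number of pieces $J = J(M,\eta)$ bounded independently of $T$.

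On each subinterval I would run a contraction in the complete metric space $X_{a,b}$ adapted to $I_j$ (as in Lemma \ref{lem:local}) for the map
\[
\Psi[w](t) := e^{-(t-t_j)L}w(t_j) + \int_{t_j}^t e^{-(t-s)L}\bigl[N(v+w)-N(v)-e\bigr]\,ds.
\]
The required bounds come from Lemma \ref{lem:Str}: the homogeneous term satisfies $\|e^{-(t-t_j)L}w(t_j)\|_{S(I_j)} \le C\|w(t_j)\|_{\dot H^1(L)}$ as in the proof of (i) in Proposition \ref{thm:LW-cri}; the nonlinear term contributes a factor $(\|v\|_{S(I_j)}^{\frac{4}{d-2}}+\|w\|_{S(I_j)}^{\frac{4}{d-2}})\|w\|_{S(I_j)}$ via the pointwise bound $|N(v+w)-N(v)| \le C(|v|^{\frac{4}{d-2}}+|w|^{\frac{4}{d-2}})|w|$ together with \eqref{eq.Str3}, exactly as in \eqref{eq.s3-1} and the contraction estimate closing the proof of Lemma \ref{lem:local}; and the error term satisfies $\|\int_{t_j}^t e^{-(t-s)L}e\,ds\|_{S(I_j)} \le C\|e\|_{L^2(I_j\times\Omega)}$, which is \eqref{eq.Str3} with $(\gamma_1,q_1)=(2,2)$ and $(\gamma_2,q_2)=(\tfrac{2(d+2)}{d-2},\tfrac{2(d+2)}{d-2})$. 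Fixing $\eta=\eta(M)$ so that $C\eta^{\frac{4}{d-2}} \le \tfrac12$, I obtain, whenever $\|w(t_j)\|_{\dot H^1(L)}$ and $\|e\|_{L^2(I_j\times\Omega)}$ are small, a unique fixed point with
\[
\|w\|_{S(I_j)} + \|w\|_{L^\infty(I_j;\dot H^1(L))} \le C\bigl(\|w(t_j)\|_{\dot H^1(L)} + \|e\|_{L^2(I_j\times\Omega)}\bigr),
\]
the $\dot H^1$-bound coming from \eqref{eq.Str4} and the $u_t$- and $Lu$-bounds as in \eqref{eq.s3-2}.

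The main obstacle is the bookkeeping across the $J$ subintervals: the endpoint value $\|w(t_{j+1})\|_{\dot H^1(L)}$ seeding the next step is controlled by $C_0\bigl(\|w(t_j)\|_{\dot H^1(L)} + \|e\|_{L^2(I_j\times\Omega)}\bigr)$ with some $C_0 \ge 1$, so a priori the bound could grow geometrically in $j$. Iterating from $\|w(t_0)\|_{\dot H^1(L)} = \|u_0 - v_0\|_{\dot H^1(L)} \le \delta$ gives $\|w(t_j)\|_{\dot H^1(L)} \le C_0^{\,j}(\delta + C\delta)$, and the decisive point is that $J = J(M,\eta)$ is finite; hence after choosing $\delta = \delta(M)$ small enough the running norm stays below the radius demanded by each local contraction, which closes the iteration and yields the global bound $\|u-v\|_{L^\infty([0,T);\dot H^1(L))} + \|u-v\|_{S((0,T))} \le C$ with $C = C(M,\delta)$. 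This finite-but-$M$-dependent number of steps is exactly why the constants are permitted to depend on $M$.
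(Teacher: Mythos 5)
Your proposal is correct and follows essentially the same route as the paper's proof (which is itself modeled on the long-time perturbation argument of Theorem 2.14 in \cite{KM-2006}): reduce to the difference equation for $w=u-v$, partition $[0,T)$ into finitely many subintervals on which $\|v\|_{S}$ is small, run a contraction via Lemma \ref{lem:Str} on each piece, and absorb the resulting geometric growth over the $M$-dependent finite number of steps by taking $\delta=\delta(M)$ small. The only cosmetic difference is that the paper phrases the iteration as a single fixed-point argument in a space $X([0,T))$ with radius $K_0\|w_0\|_{\dot H^1(L)}$, $K_0=\sum_{n=1}^{n_0}(4C_1)^n$, rather than solving interval by interval, but the estimates and bookkeeping are identical.
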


\begin{proof}
The proof is based on the method of proof of Theorem 2.14 in \cite{KM-2006}.
We consider the Cauchy problem
\begin{equation}\label{eq.appro}
\begin{cases}
\partial_t w + Lw = |v+w|^{\frac{4}{d-2}}(v+w) - |v|^{\frac{4}{d-2}}v -e\quad &\text{in }(0,T) \times \Omega,\\
w(0) = w_0 := u_0 - v_0 \in \dot H^1(L).
\end{cases}
\end{equation}
Define the map $\Phi$ by
\[
\Phi[w](t):=
e^{-tL}w_0 +
\int_0^t e^{-(t-s)L} \left(|v+w|^{\frac{4}{d-2}}(v+w) - |v|^{\frac{4}{d-2}}v -e\right)\,ds,
\]
and the complete metric space $X([0,T))$ by
\[
X([0,T)) := \left\{
w \in L^\infty([0,T); \dot H^1(L)) \cap S ((0,T)):
\| w \|_{X([0,T))} \le {K_0} \| w_0 \|_{\dot H^1(L)}
\right\},
\]
\[
\| w \|_{X([0,T))} :=
\| w \|_{L^\infty([0,T); \dot H^1(L))} +
\| w \|_{S((0,T))},
\]
where $K_0$ is the constant such that
\[
K_0 := \sum_{n=1}^{n_0} (4C_1)^n.
\]
Here $C_1$ is a positive constant in the estimate
\[
\|e^{-tL}f \|_{X([0,T'))}
\le C_1 \| f\|_{\dot H^1(L)}
\]
for any $0<T'\le T$ and $f\in  \dot H^1(L)$,
and $n_0 \in \mathbb N$ is determined later.
By Lemma \ref{lem:Str}, we estimate
\[
\begin{split}
& \|\Phi[w] \|_{X([0,T))}\\
\le &\, C_1\| w_0\|_{\dot H^1(L)}
+
C_2 \left(
\|v\|_{S((0,T))}^{\frac{4}{d-2}} + \|w\|_{S((0,T))}^{\frac{4}{d-2}}
\right)
\|w\|_{S((0,T))}
+ C_3 \| e\|_{L^2([0,T)\times \Omega)}
\end{split}
\]
for any $w \in X([0,T))$. Then,
choosing $\| w_0\|_{\dot H^1(L)}$ so small that
\[
C_2 \|w\|_{S((0,T))}^{\frac{4}{d-2}}\|w\|_{S((0,T))}
\le
C_2 K_0^{\frac{d+2}{d-2}} \| w_0 \|_{\dot H^1(L)}^{\frac{4}{d-2}} \| w_0 \|_{\dot H^1(L)}
\le C_1 \| w_0 \|_{\dot H^1(L)},
\]
we have
\begin{equation}\label{eq.small}
\|\Phi[w] \|_{X([0,T))}
\le 2 C_1\| w_0\|_{\dot H^1(L)}
+
C_2
\|v\|_{S((0,T))}^{\frac{4}{d-2}}
\|w\|_{S((0,T))}
+ C_3 \| e\|_{L^2([0,T)\times \Omega)}.
\end{equation}
Furthermore, splitting the interval $[0,T)$ into intervals $\{[T_{n-1}, T_n)\}_{n=1}^{n_0}$ such that
$T_0 = 0$, $T_{n_0} = T$ and
\[
\|w\|_{S((T_{n-1},T_n))} \le \| w_0\|_{\dot H^1(L)},
\]
\[
C_2 \|v\|_{S((T_{n-1},T_n))}^{\frac{4}{d-2}} \le C_1,
\]
\[
C_3 \| e\|_{L^2([T_{n-1},T_n)\times \Omega)} \le C_1\| w_0\|_{\dot H^1(L)}
\]
for $n=1,\ldots n_0$,
we find from \eqref{eq.small} that
\begin{equation}\label{eq.n1}
\|\Phi[w]\|_{X([T_0,T_1))}
\le 4 C_1\| w_0\|_{\dot H^1(L)}.
\end{equation}
Similarly, we get
\[
\|\Phi[w]\|_{X([T_1,T_2))} \le 4 C_1\| w(T_1)\|_{\dot H^1(L)} \le (4 C_1)^2\| w_0\|_{\dot H^1(L)},
\]
where we used \eqref{eq.n1} in the last step. Repeating this argument, we obtain
\[
\|\Phi[w]\|_{X([T_{n-1}, T_n))} \le (4 C_1)^n\| w_0\|_{\dot H^1(L)}
\]
for $n=1,\ldots n_0$. Hence
\[
\|\Phi[w]\|_{X([0,T))} \le \sum_{n=1}^{n_0} (4C_1)^n \| w_0\|_{\dot H^1(L)} = K_0 \| w_0\|_{\dot H^1(L)},
\]
which implies that $\Phi$ is a mapping from $X([0,T))$ into itself.
In a similar argument, we can prove that $\Phi$ is contractive from $X([0,T))$ into itself. Hence it follows from the fixed point argument that
there exists a unique solution $w$ to \eqref{eq.appro}, and
$u:= v+w$ is the required solution to \eqref{eq.NLH-cri}. Thus we conclude Lemma  \ref{prop:perturbation}.
\end{proof}

As a corollary, we have the following:

\begin{cor}\label{cor.maxtime}
Suppose that $L$ satisfies Assumption {\rm B}.
Let $u_0, u_{0,n} \in \dot H^1(L)$ for $n \in \mathbb N$, and let
$u$ and $u_n$ be solutions to \eqref{eq.NLH-cri} with $u(0)=u_0$ and $u_n(0) = u_{0,n}$, respectively.
If $u_{0,n} \to u_0$ in $ \dot H^1(L)$ as $n\to \infty$, then
\[
T_m(u_0) \le \liminf_{n\to\infty} T_m(u_{0,n})
\]
and
\[
\lim_{n\to\infty}\|u_n(t) - u(t)\|_{\dot H^1(L)} =0
\]
for any $t \in [0,T_m(u_0))$.
\end{cor}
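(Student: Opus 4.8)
The plan is to derive the corollary from the perturbation result, Lemma~\ref{prop:perturbation}, applied with the unperturbed solution $u$ as the reference profile $v$, with vanishing error $e\equiv0$, and with perturbed data $u_{0,n}$. Fix an arbitrary $T\in(0,T_m(u_0))$. Since $T<T_m(u_0)$, the blow-up criterion (iv) in Proposition~\ref{thm:LW-cri} ensures $\|u\|_{S((0,T))}<\infty$, while $u\in C([0,T];\dot H^1(L))$ gives $\|u\|_{L^\infty([0,T);\dot H^1(L))}<\infty$; hence
\[
M:=\|u\|_{L^\infty([0,T);\dot H^1(L))}+\|u\|_{S((0,T))}<\infty.
\]
Lemma~\ref{prop:perturbation} then furnishes a threshold $\delta=\delta(M)>0$.

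Since $u_{0,n}\to u_0$ in $\dot H^1(L)$, there is $N=N(\delta)$ such that $\|u_{0,n}-u_0\|_{\dot H^1(L)}\le\delta$ for all $n\ge N$. Because $e\equiv0$ trivially satisfies $\|e\|_{L^2([0,T)\times\Omega)}=0\le\delta$, Lemma~\ref{prop:perturbation} produces, for each $n\ge N$, a solution $\tilde u_n$ to \eqref{eq.NLH-cri} on $(0,T)\times\Omega$ with $\tilde u_n(0)=u_{0,n}$. By the uniqueness statement (ii) of Proposition~\ref{thm:LW-cri}, $\tilde u_n$ coincides with the maximal solution $u_n$ on their common interval, so $u_n$ exists on $[0,T)$ and therefore $T_m(u_{0,n})\ge T$ for all $n\ge N$. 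Consequently $\liminf_{n\to\infty}T_m(u_{0,n})\ge T$, and letting $T\uparrow T_m(u_0)$ yields the first assertion $T_m(u_0)\le\liminf_{n\to\infty}T_m(u_{0,n})$.

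For the convergence, I would use the quantitative bound hidden in the proof of Lemma~\ref{prop:perturbation} rather than the mere boundedness stated there: the fixed-point construction for $w_n:=u_n-u$, whose initial datum is $w_n(0)=u_{0,n}-u_0$ and whose forcing is $e\equiv0$, yields
\[
\|u_n-u\|_{X([0,T))}=\|w_n\|_{X([0,T))}\le K_0\,\|u_{0,n}-u_0\|_{\dot H^1(L)},
\]
with $K_0=K_0(M)$ the constant appearing there. Since the right-hand side tends to $0$, we obtain $\sup_{t\in[0,T)}\|u_n(t)-u(t)\|_{\dot H^1(L)}\to0$, and in particular $\|u_n(t)-u(t)\|_{\dot H^1(L)}\to0$ for every $t\in[0,T)$. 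As every $t\in[0,T_m(u_0))$ lies in such an interval $[0,T)$, this gives the desired pointwise convergence on all of $[0,T_m(u_0))$.

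The main obstacle I anticipate is not the lower-semicontinuity part, which is a routine continuation argument, but making the convergence quantitative: Lemma~\ref{prop:perturbation} as stated only bounds $\|u-v\|$ by a constant $C$, so to conclude convergence one must reopen its proof and track that the contraction in fact delivers the bound proportional to $\|w_0\|_{\dot H^1(L)}=\|u_{0,n}-u_0\|_{\dot H^1(L)}$. Care is also needed to match the solution produced by the perturbation lemma on $[0,T)$ with the maximal solution $u_n$ via uniqueness, so that the statements about $T_m(u_{0,n})$ are justified.
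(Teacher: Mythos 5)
Your proposal is correct and is essentially the paper's intended argument: the paper states this result as an immediate consequence of Lemma~\ref{prop:perturbation}, applied exactly as you do with $v=u$, $e\equiv 0$, and perturbed data $u_{0,n}$, followed by identification with the maximal solution $u_n$ via uniqueness. Your observation that the stated conclusion of the lemma (a bound by a constant $C$) is insufficient and that one must extract the quantitative bound $\|w_n\|_{X([0,T))}\le K_0\|u_{0,n}-u_0\|_{\dot H^1(L)}$ built into the fixed-point space $X([0,T))$ is exactly right, and is the step the paper leaves implicit.
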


\begin{proof}[Proof of {\rm (iii)} in Proposition \ref{thm:LW-cri}]
The assertion (iii) is an immediate consequence of Corollary \ref{cor.maxtime} and 
Lebesgue's dominated convergence theorem. 
\end{proof}

\begin{proof}[Proof of {\rm (iv)} in Proposition \ref{thm:LW-cri}]
Assume that $T_{m}=T_{m}(u_{0})<\infty$ and $\|u\|_{S((0,T_{m}))}<\infty$. 
Since 
\[
u(t_{0}+t) = e^{-tL}u(t) + 
\int_{0}^{t} e^{-(t-s)L}|u(t_{0}+s)|^{\frac{4}{d-2}}u(t_{0}+s)\,ds,
\]
it follows from Lemma \ref{lem:Str} that 
\begin{equation}\label{eq.D2}
\|e^{-tL}u(t_{0})\|_{S((0,T_{m}-t_{0}))}
\le \|u\|_{S((t_{0},T_{m}))} + 
C \|u\|_{S((t_{0},T_{m}))}^{\frac{d+2}{d-2}}.
\end{equation}
By the assumption $\|u\|_{S((0,T_{m}))}<\infty$, we may choose $t_{0}$ close enough 
to $T_{m}$ such that the right hand side of \eqref{eq.D2} is less than $\delta/2$, 
where $\delta$ is the constant given in Lemma~\ref{lem:local}. Then there exists 
$\varepsilon_{0}>0$ such that $\|e^{-tL}u(t_{0})\|_{S((0,T_{m}-t_{0}+\varepsilon_{0}))} < \delta$. Therefore, applying Lemma \ref{lem:local}, 
we extend the solution $u$ to the interval $[0, T_{m}+\varepsilon_{0})$.
This contradicts the maximality of $T_{m}$.
\end{proof}

\begin{proof}[Proof of {\rm (v)} in Proposition \ref{thm:LW-cri}]
For $u_{0}\in \dot H^{1}(L)$, the solution $u$ obtained in (i) satisfies
\[
\int_{0}^{t} 
(u_{s},u_{s})_{L^{2}(\Omega)}\, ds
+
\int_{0}^{t} 
(Lu,u_{s})_{L^{2}(\Omega)}\, ds
=
\int_{0}^{t} 
(|u|^{\frac{4}{d-2}}u,u_{s})_{L^{2}(\Omega)}\, ds
\]
for any $t\in(0,T_{m})$, where we note that 
$u_{s}, Lu, |u|^{\frac{4}{d-2}}u \in L^{2}([0,t]\times\Omega)$.
This implies the identity \eqref{eq.energy-identity}
by a straightforward calculation. 
The proof of (v) is finished. 
\end{proof}

\begin{proof}[Proof of {\rm (vi)} in Proposition \ref{thm:LW-cri}]
The proof is almost the same as in Lemma \ref{lem:local}. 
Indeed, instead of $X_{a,b}$ in the proof of Lemma \ref{lem:local}, 
define 
$X_{\varepsilon_0}:= \{ u \text{ on }\mathbb R_{+}\times \Omega : 
\| u \|_{S(I)}
\le \varepsilon_0\}$.
Then $(X_{\varepsilon_0}, \mathrm d)$ is a complete metric space with the distance $\mathrm d$ defined by \eqref{eq.distance}. 
Set the map $\Phi_{u_{0}}$ by \eqref{eq.Phi}. 
Now we have
\[
\| \Phi_{u_{0}}[u] \|_{S(\mathbb R_{+})}
\le \|e^{-tL}u_{0}\|_{S(\mathbb R_{+})} + C \varepsilon_0^{\frac{d+2}{d-2}}
\le \varepsilon_0,
\]
\[
\mathrm{d}(\Phi_{u_0}[u],\Phi_{u_0}[v])
\le C 
\varepsilon_0^{\frac{4}{d-2}}
\mathrm{d}(u,v)\le \frac12 \mathrm{d}(u,v),
\]
where we choose $\varepsilon_0 = 2\|e^{-tL}u_{0}\|_{S(\mathbb R_{+})}$ and 
$C \varepsilon_0^{\frac{4}{d-2}}\le 1/2$. 
Hence $\Phi_{u_{0}}$ is a contraction mapping from $X_{\varepsilon_0}$ into itself, and 
we find a unique element $u \in X_{\varepsilon_0}$ solving $\Phi_{u_{0}}[u]=u$ by the fixed point argument. By the uniqueness, this $u$ coincides with the corresponding solution in (i) of Proposition \ref{thm:LW-cri}. Therefore $u$ is definitely a global solution to \eqref{eq.NLH-cri} with  $\|u\|_{S(\mathbb R_{+})}\le 2\|e^{-tL}u_{0}\|_{S(\mathbb R_{+})}$ 
in the sense of Definition~\ref{def:2}. 
The proof of (vi) is finished.
\end{proof}


\begin{bibdiv}
\begin{biblist}




\bib{BC-1987}{article}{
   author={Benci, Vieri},
   author={Cerami, Giovanna},
   title={Positive solutions of some nonlinear elliptic problems in exterior
   domains},
   journal={Arch. Rational Mech. Anal.},
   volume={99},
   date={1987},
   number={4},
   pages={283--300},
}

\bib{BDN-arxiv}{article}{
   author={Bui, The Anh},
   author={D'Ancona, Piero},
   author={Nicola, Fabio},
   title={Sharp $L^p$ estimates for Schr\"odinger groups on spaces of homogeneous type},
   journal={to appear in Rev. Mat. Iberoam.},
}

\bib{Caz_2003}{book}{
   author={Cazenave, Thierry},
   title={Semilinear Schr\"{o}dinger equations},
   series={Courant Lecture Notes in Mathematics},
   volume={10},
   publisher={New York University, Courant Institute of Mathematical
   Sciences, New York; American Mathematical Society, Providence, RI},
   date={2003},
}


\bib{CW-1990}{article}{
   author={Cazenave, Thierry},
   author={Weissler, Fred B.},
   title={The Cauchy problem for the critical nonlinear Schr\"odinger equation
   in $H^s$},
   journal={Nonlinear Anal.},
   volume={14},
   date={1990},
   number={10},
   pages={807--836},
}



\bib{CWZ-1994}{article}{
   author={Chen, Z. Q.},
   author={Williams, R. J.},
   author={Zhao, Z.},
   title={A Sobolev inequality and Neumann heat kernel estimate for
   unbounded domains},
   journal={Math. Res. Lett.},
   volume={1},
   date={1994},
   number={2},
   pages={177--184},
}

\bib{CMR-2017}{article}{
   author={Collot, Charles},
   author={Merle, Frank},
   author={Rapha\"el, Pierre},
   title={Dynamics near the ground state for the energy critical nonlinear
   heat equation in large dimensions},
   journal={Comm. Math. Phys.},
   volume={352},
   date={2017},
   number={1},
   pages={215--285},
}

\bib{D_1989}{book}{
   author={Davies, E. B.},
   title={Heat kernels and spectral theory},
   series={Cambridge Tracts in Mathematics},
   volume={92},
   publisher={Cambridge University Press, Cambridge},
   date={1989},
}

\bib{DMSW-2011}{article}{
   author={Dickstein, Flavio},
   author={Mizoguchi, Noriko},
   author={Souplet, Philippe},
   author={Weissler, Fred},
   title={Transversality of stable and Nehari manifolds for a semilinear
   heat equation},
   journal={Calc. Var. Partial Differential Equations},
   volume={42},
   date={2011},
   number={3-4},
   pages={547--562},
}


\bib{DOS-2002}{article}{
   author={Duong, Xuan Thinh},
   author={Ouhabaz, El Maati},
   author={Sikora, Adam},
   title={Plancherel-type estimates and sharp spectral multipliers},
   journal={J. Funct. Anal.},
   volume={196},
   date={2002},
   number={2},
   pages={443--485},
}





\bib{GW-2005}{article}{
   author={Gazzola, Filippo},
   author={Weth, Tobias},
   title={Finite time blow-up and global solutions for semilinear parabolic
   equations with initial data at high energy level},
   journal={Differential Integral Equations},
   volume={18},
   date={2005},
   number={9},
   pages={961--990},
}


\bib{Gig-1986}{article}{
   author={Giga, Yoshikazu},
   title={Solutions for semilinear parabolic equations in $L^p$ and
   regularity of weak solutions of the Navier-Stokes system},
   journal={J. Differential Equations},
   volume={62},
   date={1986},
   number={2},
   pages={186--212},
}


\bib{GR-2018}{article}{
   author={Gustafson, Stephen},
   author={Roxanas, Dimitrios},
   title={Global, decaying solutions of a focusing energy-critical heat
   equation in $\Bbb R^4$},
   journal={J. Differential Equations},
   volume={264},
   date={2018},
   number={9},
   pages={5894--5927},
}



\bib{IS-1996}{article}{
   author={Ikehata, Ryo},
   author={Suzuki, Takashi},
   title={Stable and unstable sets for evolution equations of parabolic and
   hyperbolic type},
   journal={Hiroshima Math. J.},
   volume={26},
   date={1996},
   number={3},
   pages={475--491},
}

\bib{IMSS-2016}{article}{
   author={Ioku, Norisuke},
   author={Metafune, Giorgio},
   author={Sobajima, Motohiro},
   author={Spina, Chiara},
   title={$L^p-L^q$ estimates for homogeneous operators},
   journal={Commun. Contemp. Math.},
   volume={18},
   date={2016},
   number={3},
   pages={1550037, 14},
}

\bib{IO-appear}{article}{
   author={Ioku, Norisuke},
   author={Ogawa, Takayoshi},
   title={Critical dissipative estimate for a heat semigroup with a quadratic singular potential and critical exponent for nonlinear heat equations},
   journal={J. Differential Equations},
   volume={266},
   date={2019},
   number={4},
   pages={2274--2293},
}

\bib{I-1977}{article}{
   author={Ishii, Hitoshi},
   title={Asymptotic stability and blowing up of solutions of some nonlinear
   equations},
   journal={J. Differential Equations},
   volume={26},
   date={1977},
   number={2},
   pages={291--319},
}


\bib{IMT-Besov}{article}{
   author={Iwabuchi, Tsukasa},
   author={Matsuyama, Tokio},
   author={Taniguchi, Koichi},
   title={Besov spaces on open sets},
   journal={Bull. Sci. Math.},
   volume={152},
   date={2019},
   pages={93--149},
}

\bib{IMT-RMI}{article}{
   author={Iwabuchi, Tsukasa},
   author={Matsuyama, Tokio},
   author={Taniguchi, Koichi},
   title={Boundedness of spectral multipliers for Schr\"odinger operators on open sets},
   journal={Rev. Mat. Iberoam.}
   volume={34},
   date={2018},
   number={3},
   pages={1277--1322},
}

\bib{KM-2006}{article}{
   author={Kenig, Carlos E.},
   author={Merle, Frank},
   title={Global well-posedness, scattering and blow-up for the
   energy-critical, focusing, non-linear Schr\"odinger equation in the radial
   case},
   journal={Invent. Math.},
   volume={166},
   date={2006},
   number={3},
   pages={645--675},
}

\bib{KM-2008}{article}{
   author={Kenig, Carlos E.},
   author={Merle, Frank},
   title={Global well-posedness, scattering and blow-up for the
   energy-critical focusing non-linear wave equation},
   journal={Acta Math.},
   volume={201},
   date={2008},
   number={2},
   pages={147--212},
}

\bib{KV-2010}{article}{
   author={Killip, Rowan},
   author={Visan, Monica},
   title={The focusing energy-critical nonlinear Schr\"{o}dinger equation in
   dimensions five and higher},
   journal={Amer. J. Math.},
   volume={132},
   date={2010},
   number={2},
   pages={361--424},
}


\bib{KVZ-2016b}{article}{
   author={Killip, Rowan},
   author={Visan, Monica},
   author={Zhang, Xiaoyi},
   title={Quintic NLS in the exterior of a strictly convex obstacle},
   journal={Amer. J. Math.},
   volume={138},
   date={2016},
   number={5},
   pages={1193--1346},
}

\bib{KM-2018}{article}{
   author={Kova\v{r}\'{i}k, Hynek},
   author={Mugnolo, Delio},
   title={Heat kernel estimates for Schr\"{o}dinger operators on exterior
   domains with Robin boundary conditions},
   journal={Potential Anal.},
   volume={48},
   date={2018},
   number={2},
   pages={159--180},
}



\bib{MYZ-2008}{article}{
   author={Miao, Changxing},
   author={Yuan, Baoquan},
   author={Zhang, Bo},
   title={Well-posedness of the Cauchy problem for the fractional power
   dissipative equations},
   journal={Nonlinear Anal.},
   volume={68},
   date={2008},
   number={3},
   pages={461--484},
}


\bib{O-1981}{article}{
   author={\^{O}tani, M.},
   title={Existence and asymptotic stability of strong solutions of
   nonlinear evolution equations with a difference term of subdifferentials},
   conference={
      title={Qualitative theory of differential equations, Vol. I, II
      (Szeged, 1979)},
   },
   book={
      series={Colloq. Math. Soc. J\'{a}nos Bolyai},
      volume={30},
      publisher={North-Holland, Amsterdam-New York},
   },
   date={1981},
   pages={795--809},
}

\bib{O_2005}{book}{
   author={Ouhabaz, El Maati},
   title={Analysis of heat equations on domains},
   series={London Mathematical Society Monographs Series},
   volume={31},
   publisher={Princeton University Press, Princeton, NJ},
   date={2005},
}

\bib{PS-1975}{article}{
   author={Payne, L. E.},
   author={Sattinger, D. H.},
   title={Saddle points and instability of nonlinear hyperbolic equations},
   journal={Israel J. Math.},
   volume={22},
   date={1975},
   number={3-4},
   pages={273--303},
}

\bib{Sch-2012}{article}{
   author={Schweyer, R\'{e}mi},
   title={Type II blow-up for the four dimensional energy critical semi
   linear heat equation},
   journal={J. Funct. Anal.},
   volume={263},
   date={2012},
   number={12},
   pages={3922--3983},
}


\bib{Simon-1982}{article}{
   author={Simon, Barry},
   title={Schr\"{o}dinger semigroups},
   journal={Bull. Amer. Math. Soc. (N.S.)},
   volume={7},
   date={1982},
   number={3},
   pages={447--526},
}

\bib{T-1976}{article}{
   author={Talenti, Giorgio},
   title={Best constant in Sobolev inequality},
   journal={Ann. Mat. Pura Appl. (4)},
   volume={110},
   date={1976},
   pages={353--372},
}


\bib{T-1972}{article}{
   author={Tsutsumi, Masayoshi},
   title={On solutions of semilinear differential equations in a Hilbert space},
   journal={Math. Japon.},
   volume={17},
   date={1972},
   pages={173--193},
}



\bib{WHHG_2011}{book}{
   author={Wang, Baoxiang},
   author={Huo, Zhaohui},
   author={Hao, Chengchun},
   author={Guo, Zihua},
   title={Harmonic analysis method for nonlinear evolution equations. I},
   publisher={World Scientific Publishing Co. Pte. Ltd., Hackensack, NJ},
   date={2011},
}

\bib{Wei-1981}{article}{
   author={Weissler, Fred B.},
   title={Existence and nonexistence of global solutions for a semilinear
   heat equation},
   journal={Israel J. Math.},
   volume={38},
   date={1981},
   number={1-2},
   pages={29--40},
}


\end{biblist}
\end{bibdiv}

\end{document}